\documentclass[11pt,reqno]{amsart}
\numberwithin{equation}{section}
%%%%%%%packages
\usepackage{amssymb}
\usepackage{enumerate, xspace}
\usepackage{marvosym} % monetary symbols, \EUR or \EURcr, \EyesDollar
\usepackage[sans]{dsfont} % allows \mathds{E 1}, without sans is less heavy 
\usepackage{bbm}
\usepackage{changebar}
\usepackage[colorlinks]{hyperref}
\usepackage{verbatim}
\usepackage[usenames,dvipsnames,svgnames,table]{xcolor}
\usepackage{graphicx}
%https://www.overleaf.com/project/5efcaf09b77a25000137d7be
\usepackage{ulem}
\usepackage{setspace}
%\doublespacing
% or: 
%\onehalfspacing
\usepackage[charter]{mathdesign}
%%%%%%%%%Pagination
\hfuzz=15pt

%%%%%%%%%Theorems

\newtheorem{thm}{Theorem}[section]
\newtheorem{lem}[thm]{Lemma}
\newtheorem{cor}[thm]{Corollary}

\newtheorem{prop}[thm]{Proposition}

\newtheorem{definition}[thm]{Definition}

\newtheorem{rem}[thm]{Remark}

%%%%%%%%%%mathcal

%%%%%%%%%%%%zrp and exclusion

\newcommand\zr{\nu_{ss}^N}
\newcommand\ex{\mu_{ss}^N}
\newcommand\Lzr{L_N}
\newcommand\Lex{\mathcal{L}_N}
\newcommand\xalpha{\tilde\alpha}
\newcommand\xbeta{\tilde \beta}
\newcommand\zalpha{\alpha}
\newcommand\zbeta{\beta}

%%%%%%%%%%%%mathbb

%%%%%%%%%%%greek
\newcommand\ve{\varepsilon}

\newcommand\vf{\varphi}

%%%%%%%%%%%%%%% mathrm

%%Bold math

\newcommand\LL{{\mathbb L}}
\newcommand\EE{{\mathbb E}}

\newcommand\NN{{\mathbb N}}
\newcommand\RR{{\mathbb R}}

\newcommand\ZZ{{\mathbb Z}}

\newcommand\R{{\mathbb R}}

%%%%%%%%%%%%%%%various

%------- commentaire --------

\usepackage{comment}

%---------- caligraphie --------------
\newcommand{\mc}[1]{{\mathcal #1}}

\newcommand{\bb}[1]{{\mathbb #1}}

%-----------Transition prob. function ---------------------

%-----------Time macro ---------------------

\setlength{\hoffset}{0cm}
\setlength{\textwidth}{17cm}
\setlength{\voffset}{0cm}
\setlength{\textheight}{22cm}
\setlength{\oddsidemargin}{0cm}
\setlength{\evensidemargin}{0cm}

\begin{document}
\title[Boundary driven zero-range with long jumps]{Non-equilibrium stationary properties of the\\ boundary driven zero-range process with long jumps}
\author[Bernardin]{\textsc{C\'edric Bernardin}} 
\address{Universit\'e C\^ote d'Azur, CNRS, LJAD\\
Parc Valrose\\
06108 NICE Cedex 02, France\\
\& Interdisciplinary Scientific Center Poncelet (CNRS IRL 2615), 119002 Moscow, Russia}
\email{{\tt cbernard@unice.fr}}
\author[Gon\c{c}alves]{\textsc{Patr\'icia   Gon\c calves}}
\address{Center for Mathematical Analysis,  Geometry and Dynamical Systems \\Instituto Superior T\'ecnico, Universidade de Lisboa\\
Av. Rovisco Pais, no. 1, 1049-001 Lisboa, Portugal}
\email{{pgoncalves@tecnico.ulisboa.pt}}
\author[Oviedo]{\textsc{Byron Jim\'enez-Oviedo}}
\address{Escuela de Matem\'atica,  \\
Faculdad de Ciencias Exactas y Naturales, Universidad Nacional de Costa Rica\\
Heredia, Costa Rica}
\email{{byron.jimenez.oviedo@una.cr}}
\author[Scotta]{\textsc{Stefano Scotta}}
\address{Center for Mathematical Analysis,  Geometry and Dynamical Systems \\
Instituto Superior T\'ecnico, Universidade de Lisboa\\
Av. Rovisco Pais, no. 1, 1049-001 Lisboa, Portugal}
\email{{stefano.scotta@tecnico.ulisboa.pt}}

\date{\today.}
\begin{abstract} 
We consider the zero-range process with long jumps and in contact with {infinitely extended reservoirs} in its non-equilibrium stationary state. We derive the hydrostatic limit and the Fick's law, which are a consequence of a {static} relationship between the  exclusion process and the zero-range process. We also obtain the large deviation principle for the empirical density, i.e. we compute the non-equilibrium free energy.
\end{abstract}

\keywords{Fick's law, Hydrostatics, Zero-range, Exclusion, long-jumps, infinitely extended reservoirs} %\subjclass{...}

\maketitle

%\tableofcontents

\section{Introduction}

The description of the macroscopic properties of the non-equilibrium stationary state (NESS) of a large system of interacting particles driven outside of equilibrium by boundary forces has seen a lot of activity and progress recently.   The results on the  NESS follow from the combination of two different approaches: one based on ad hoc exact computations of the NESS \cite{DER} and the other one, based on  the so-called Macroscopic Fluctuation Theory   (MFT) \cite{BER-MFT}, which provides a unified beautiful treatment of non-equilibrium systems described macroscopically by diffusive or hyperbolic conservation laws.
MFT is more generic, but computationally less efficient, and it is based on the development of the hydrodynamic limit theory \cite{Spohnbook, KL}. MFT is usually applied to diffusive systems, i.e. systems whose hydrodynamic equation is given by  a diffusion equation, but it is, in fact, more general and the framework encapsulates also the systems whose hydrodynamic limits are described by conservation laws \cite{J, VAR, Bah, Mar0, BBMN, Bar2}. It is also possible to use the MFT for the NESS of systems with several microscopic conservation laws, see e.g. \cite{Ber0, BD, Ber1,V0}.

\medskip 

More recently, several studies of interacting particle systems have appeared, whose hydrodynamic limits are given by a fractional diffusion equation or a fractional conservation law \cite{J0, BPS, cpb, BGBJ, BGS, BJ, CGJ, G0, GJ, GS0, Seth, SethS}. In order to derive such equations, the authors have to invoke a suitable coarse-graining in space and time which is not the diffusive one - space does not scale as square root of time -- nor the Eulerian one -- space does not scale as time. Since we will only consider NESS in this paper, i.e. the system in infinite time, we prefer to not to enter in these space-time scales considerations here. In the models considered in those articles, the fractional nature is induced by the presence of non-local interactions in the microscopic dynamics (the reader can think for example of a system of independent random walks with a transition probability which has infinite variance). Since the interactions are non-local, the consequences of  the boundary reservoirs is more subtle to understand, with respect to the case of local interactions, because the operators involved (microscopically and macroscopically) are non-local while a boundary condition has a local nature. One of the motivations to study such systems with non-local interactions is that they could play the role of toy models to describe some ``fractional'' universality classes \cite{LP, Spohn0, DDSMS, PSS,PSSS,HG, Spohn_Stolz, Kundu} of interacting particles with local interactions but with several conservation laws (see \cite{JKO,BGJ, BGJS0, BGJSS, BGJS, SSS,C0} for rigorous studies).

\medskip 

The aim of this article is to provide a rigorous study of the NESS of a superdiffusive interacting particle system whose macroscopic behaviour is described by a fractional diffusion. Apart from \cite{BJ} we are not aware of any rigorous study of the NESS for superdiffusive models. Our study focuses on the boundary driven zero-range process with long jumps. The zero-range process with finite range jumps has been introduced in \cite{Spi} and then intensively studied with different aims: existence theorems for the infinite volume dynamics, characterization of the invariant measures, derivation of hydrodynamic limits, study of phase transitions and condensation phenomena etc, \cite{Lig,DMF,Spohnbook,KL,evans0,evans,LMS,CG}. Our choice to investigate the zero-range process with long jumps instead of some other process is based on two reasons: first, it is one of the rare systems for which the NESS has some semi-explicit form; second, its NESS is strongly connected to the NESS of the boundary driven exclusion process with long jumps. The later is not explicit but it has the advantage to have "simple"  hydrodynamic limits, in the sense that they are given by linear equations (but non-local and with boundary conditions), so that the hydrostatic properties of the NESS of the boundary driven exclusion process with long jumps are available.  To be precise, we consider a one-dimensional superdiffusive zero-range process with long jumps and in contact with extended reservoirs at the boundary of the domain of the jumping particles. By connecting properties of its typical profile in the NESS with the ones of the boundary driven exclusion process with long jumps, and using the fact that its NESS is of product type, we derive the form of its hydrostatic profile,  we also prove a fractional Fick's law and moreover, we derive the large deviation function of the empirical density in the NESS. While the hydrodynamic limits are not derived in this work, here we  provide certainly a first step in the development of the MFT for superdiffusive systems.

\medskip 

This paper is organised as follows. In Section \ref{sec:model and Results} we describe the boundary driven zero range process with long jumps in contact with reservoirs (for simplicity ZRP) and the link between its NESS and the NESS of the boundary driven exclusion process with long jumps. We also present there the main results obtained in this paper. In Section \ref{sec:3} we present the proof of two results which give information about the NESS for the ZRP and are the building blocks for our main theorem.  In Section \ref{sec:4} we present the proof of our main results, which is a generalization of the hydrostatic limit for the ZRP and the Fractional Fick's law. In Section \ref{sec:5} we present the proof of the Large Deviations for our model. Appendix \ref{app:hydrostaticsexclusion} is dedicated to the presentation of hydrostatic limit and Fick's law for the exclusion process with long jumps and in contact with reservoirs.

\section{Models and statement of results}
\label{sec:model and Results}

\subsection{The models}

The boundary driven zero-range process with long jumps is a  continuous time pure jump Markov process with countable state space $\Omega_{N}= \NN_{0}^{\Lambda_{N}}$ where $\Lambda_N=\{1, \ldots, N-1\}$, $N \ge 2$. A typical configuration of this process $\xi \in \Omega_N$ is denoted by  $\{\xi(x)\}_{x\in \Lambda_{N}}$ and $\xi(x)$ represents the number of particles at site $x\in \Lambda_{N}$. Its dynamics is defined through a non-decreasing  function  $g:\NN_{0}\rightarrow [0,\infty)$ such that $g(0)=0$  and strictly positive on the set of positive integers, 
%and satisfying the Lipschitz condition:
%\begin{equation}
%\label{eqn:condition LG}
%\exists c\ge 0, \quad \forall k \in {\mathbb N}_0, \quad \vert g(k) -g(k+1) \vert \leq c < \infty, 
%\end{equation}
and a  transition probability $p:\ZZ\rightarrow [0,1]$ given by
\begin{equation}
    \label{eq:prob}
p(z)= \cfrac{c_{\gamma}}{|z|^{1+\gamma}}, \quad |z| \ge 1,\quad p(0)=0,\end{equation} 
where  $0<\gamma<2$ and   $c_\gamma = 2/  {\zeta (\gamma+1)}>0$ is a normalisation factor with $\zeta$ being the Riemann zeta function. The parameter $d$ which will appear later is defined, for $\gamma>1$, by 
\begin{equation}
\label{eq:d}
d:=d(\gamma) =\sum_{z\ge 1} z p(z) =\zeta (\gamma)
\end{equation}
and corresponds to the half of the first moment of $p(\cdot)$. We remark that since $0<\gamma<2$ the second moment of $p(\cdot)$ is infinite.  
Before {describing} the process under investigation in this article, we first describe the zero-range process with long jumps and free boundary.

\subsubsection{The zero-range process with long jumps and free boundary}

The zero-range process $({\tilde \xi}_t)_{t\ge 0}$ with long jumps in $\Lambda_{N}$, with interaction rate $g(\cdot)$, transition probability $p(\cdot)$ and free boundary conditions is the pure jump Markov process on $\Omega_N$ generated by the operator $\Lzr^{b}$ acting on any bounded measurable function  $f:\Omega_N \to \RR$ as 
$$(\Lzr^b f)(\xi) = \, \sum_{x,y \in \Lambda_N} p (y-x)g(\xi(x))[ f(\xi^{x,y}) -f(\xi)].$$
Here the configuration $\xi^{x,y}$ denotes the configuration $\xi$ (we can always assume that $\xi(x)\ge 1$ since $g(0)=0$) obtained by moving one particle from $x$ to $y$, i.e. it is defined as 
\begin{equation}\label{interch_1}
\xi^{x,y}(z) =( \xi(x)-1)\mathbbm{1}_{z=x}+(\xi(y)+1)\mathbbm{1}_{z=y}+ \xi(z)\mathbbm{1}_{z\neq x,y}.
\end{equation}
The dynamics $({\tilde \xi}_t)_{t\ge 0}$ just defined  preserves the  number of particles. In fact, restricted to the subspace of $\Omega_N$ composed of configurations $\xi$ with a given fixed  number  of particles,  the process is ergodic and it has a unique invariant measure. Therefore, it follows that on $\Omega_N$ the process has a one-parameter family of invariant measures which are defined as follows. Consider the  partition function $Z: \RR^{+} \rightarrow \RR^{+}$ 
defined by 
$$Z(\vf) = \sum _{k =0}^{\infty}\dfrac{\vf^{k}}{g(k)!},$$
where  $g(k)!=g(1)\cdots g(k)$ for $k>0$ and $g(0)!= 1 $. {By the ratio test it is not difficult to see that $\varphi^{*} :=\liminf_{k\to \infty} g(k) \in (0, \infty]$ is the radius of convergence of the entire function $Z$.}  Moreover, $Z$ is strictly increasing.
%In the rest of the paper we will assume that $\lim _{\varphi\to \varphi^{*}}Z(\varphi)= \infty.$ 

For any $\vf < \vf^{*}$ we define on $\Omega_{N}$ the product measure $\tilde \nu_{\vf} $, with marginal distributions given by  
\begin{equation}
\tilde\nu_{\vf}\lbrace \xi \in \Omega_{N}\  ;  \ \xi(x) = k \rbrace = \dfrac{\vf^{k}}{Z(\vf)g(k)!}. 
\end{equation}

A remarkable property of the zero-range process is that $\{{ \tilde \nu}_\varphi\; ; \; 0\le \varphi < \varphi^*\}$ forms a family of invariant measures for the process generated by $L_N^b$.
Note that  for $\vf\in [0,\vf*)$ and any $x \in \Lambda_N$ we have that 
\begin{equation*}
E_{\tilde\nu_{\vf}}[g(\xi(x))]=\vf .
\end{equation*}
Hereinafter $E_\mu$ (resp. $\mathbb E_\mu$) denotes the expectation with respect to the probability measure $\mu$ (resp. the expectation with respect to the path probability measure $\mathbb P_\mu$ corresponding to the process starting with initial measure $\mu$).

For every $\vf\in [0,\vf^*)$, we denote by $R(\varphi)$ the average number of particles per site under ${ \tilde\nu}_\varphi$:
\begin{equation*} \label{eq:function_R}
R(\vf)= E_{\tilde \nu_{\vf}}[\xi(x)].\end{equation*}
The  function $R(\cdot)$ can be rewritten as %map $\vf\to \rho(\vf)$ can rewrite as 
\begin{equation}
\label{eq:Rprime}
R(\vf) = \dfrac{1}{Z(\vf)}\sum _{k=0}^{\infty}\dfrac{k \vf^{k}}{g(k)!}= \dfrac{\vf Z'(\vf)}{Z(\vf)} = \vf \dfrac{\rm d}{{\rm d} \vf}(\log(Z(\vf))).
\end{equation}
Since $R'(\vf) = {E}_{\tilde\nu_{\vf}}\left[\left(\xi(x)-{E}_{\nu_{\vf}}[\xi(x)]\right)^2\right]>0$, the function $R(\cdot)$ is strictly increasing from $[0,\vf^*)$ to $[0,\infty)$, and defining
\begin{equation*}
\displaystyle m^{*}:=\lim_{\vf\uparrow\vf^{*}} R(\vf),
\end{equation*}
the map $\vf\to R(\vf)$ is a bijection between $[0,\vf^{*})$ and $\left[0,\displaystyle m^{*}\right)$. We then denote by $\Phi(\cdot)$ the inverse map of $R(\cdot)$. Hence, we can alternatively parameterise the invariant measures by $m\in [0,m^{*})$, the number the particles per site,  instead of $\varphi$, i.e. we define for any $m\in [0,m^{*})$ that
$ \nu_{m} = \tilde\nu_{\Phi(m)}.$

%\begin{rem}
%Since $R(\cdot)$ is strictly increasing  and we have assumed that 
%$$\lim_{\varphi\to \varphi^*} Z(\varphi) = \infty,$$
%we see that the  range of $R(\cdot)$ is over all positive real numbers. So, it means that it is possible parameterize %for any positive value $m$. 
 
% On the other hand, if we do not impose the condition $m^* = \infty$, then it is not possible parametirize for a density $m > m^*$ unless condensation appears (\cite{evans}). 
%\end{rem}

\subsubsection{The boundary driven zero-range process with long jumps}\label{ZRPdef}

In order to define the boundary driven zero-range process with long jumps, we have now to introduce the boundary driving process. We will use infinitely extended particle reservoirs injecting or removing particles everywhere in the bulk $\Lambda_N$, { so that the number of particles is no longer conserved} (see, for example, \cite{cpb, BGBJ,BJ,BPS}). Any configuration $\xi \in \Omega_N$ is extended into a configuration $\xi \in ({\mathbb N}_0 \cup \{\infty\})^{\mathbb Z}$ by setting $\xi(z)=\infty$ if $z\notin \Lambda_N$. We also adopt the usual conventions of summation on ${\mathbb N}_0 \cup\{ \infty\}$. We assume that $\zalpha, \zbeta \in (0,m^{*})$ and without loss of generality that $\alpha \le \beta$.  Observe that
\begin{equation}
\label{eq:alphabeta}
\zalpha, \zbeta \in (0,m^{*}) \quad \Longleftrightarrow \quad \Phi (\zalpha), \Phi(\zbeta) \in (0, \varphi^*).  \end{equation}

The action of the generators on any bounded measurable function  $f:\Omega_N \to \RR$ at the left and right boundary are defined, respectively, by 
\begin{equation}
\begin{split}
&(\Lzr^{r} f)(\xi)= \sum_{x \in \Lambda_N}r_N^+(\tfrac xN)\left\lbrace \Phi(\zbeta) [f(\xi^{x,+}) - f(\xi)]+g(\xi(x))[ f(\xi^{x,-}) -f(\xi)]\right\rbrace,\\
&(\Lzr^{\ell} f)(\xi) = \sum_{x \in \Lambda_N}r_N^-(\tfrac xN)\left\lbrace \Phi(\zalpha)  [f(\xi^{x,+}) - f(\xi)]+g(\xi(x))[ f(\xi^{x,-}) -f(\xi)]\right\rbrace.
\end{split}
\end{equation}
Above
\begin{equation}\label{interch_glauber_zr}
\xi^{x,\pm}(z) =( \xi(x)\pm 1)\mathbbm{1}_{z=x}+ \xi(z)\mathbbm{1}_{z\neq x}
\end{equation}
and
\begin{equation}
\label{eq:functions_r}
r_{N}^{+}\left(\tfrac{x}{N}\right)= \sum_{y\ge N}p(y-x) \quad\textrm{and}\quad r_{N}^{-}\left(\tfrac{x}{N}\right)= \sum_{y\le0 }p(y-x).\end{equation}
The  zero-range process in contact with infinitely extended reservoirs  is the continuous time pure jump Markov process $(\xi_{t})_{t\ge 0}$ with infinitesimal generator given by 
\begin{equation}
\label{eq:generatorZR}
\Lzr = \underbrace{\Lzr^b}_{\text{Bulk dynamics}}  + \frac{\kappa}{ N^{\theta}}\underbrace{ \Big(\Lzr^{r} + \Lzr^{\ell}\Big)}_{\text{Boundary dynamics}}
\end{equation}
where  the parameters $\theta$ and $\kappa$ satisfy $\theta \in \R$ and $\kappa>0$.  We consider the process speeded up in the time scale \begin{equation}\label{eq:time_scale}
    \Theta(N)=n^\gamma\bb{1}_{\theta\geq 0}+n^{\gamma+\theta}\bb{1}_{\theta < 0}
\end{equation} so that its generator becomes $\Theta(N)L_N$.

\begin{figure}[h]
    \centering
    \includegraphics{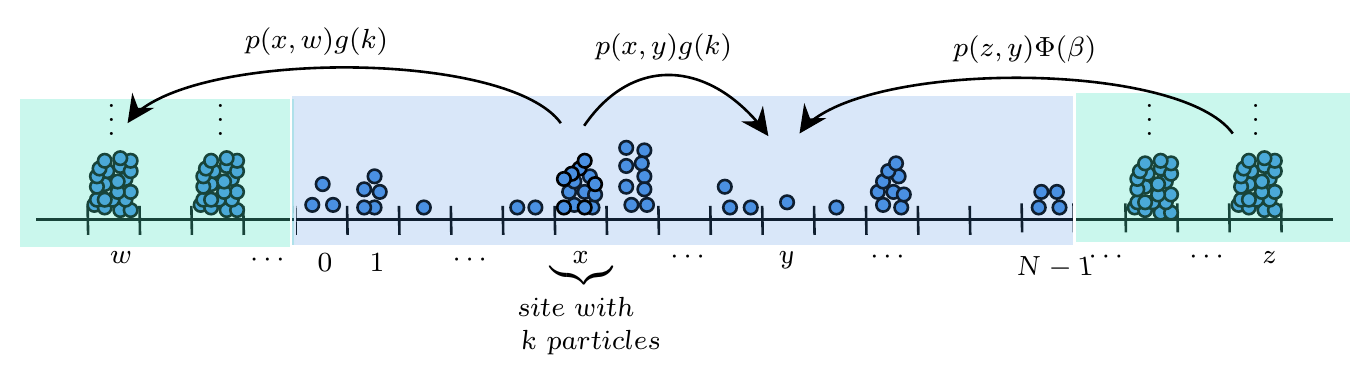}\caption{Dynamics of the Long-jumps Zero-range process in contact with infinitely extended reservoirs.}
\end{figure}

\begin{rem}
Other models of reservoirs could have been considered but the results would be quite similar (see Section 2.6 in \cite{BPS}). 
\end{rem}

\begin{rem}
Observe that we did not impose very restrictive conditions on the function $g$ defining the dynamics. Since we do not have to consider the dynamics in infinite volume but only in finite volume, existence of the dynamics can be obtained by rather standard methods. In the case of free boundaries, this is trivial because the dynamics is conservative in the number of particles so that if the initial condition has a finite number $M$ of particles then the dynamics will evolve on the finite state space composed of configurations with $M$ particles and thus it will be well defined. In the boundary driven case, this is less trivial because the number of particles is no longer conserved but we observe that there exists a constant $C$ (depending on $\alpha, \beta$ and $N$) such that 
$$L_N \left(\sum_{x\in \Lambda_N} \xi (x) \right) \le C.$$
This implies that if we start from a configuration with $M$ particles, then for any  time horizon  $T>0$ the dynamics will be well defined and it will evolve during the time interval $[0,T]$ on the finite state space composed of particles with at most $M(T)=M+CT$ particles.  
\end{rem}

The boundary driven exclusion process with long jumps has been introduced and studied in a series of recent works \cite{cpb, BGBJ, BJ,BPS}. It is not the model we are interested in this paper, nevertheless, it has some links with the boundary driven zero-range process that will be crucial to establish some of our results.

\subsubsection{The boundary driven exclusion process with long jumps}\label{excldef}

 The boundary driven $M$-exclusion process with long jumps is the continuous time  pure jump Markov process, that we denote by $(\eta_t)_{t\ge 0}$, with state space $\chi_N=\{0,1\}^{\Lambda_N}$ whose dynamics is defined as follows. A typical configuration $\eta\in \chi_N$  is denoted by  $\{\eta(x)\}_{x\in \Lambda_{N}}$ with  $\eta (x)\in \{0,1\}$, for $x \in \Lambda_N$, where we interpret $\eta (x) =1$ (resp. $\eta (x)=0$) as the presence (resp. the absence) of a particle at site $x$. Its infinitesimal generator  is given by 
\begin{equation}\label{genexcl}
\Lex = \underbrace{\Lex^b}_{\text{Bulk dynamics}}  + \frac{\kappa}{ N^{\theta}}\underbrace{ \Big(\Lex^{r} + \Lex^{\ell}\Big)}_{\text{Boundary dynamics}}
\end{equation}
where the generator $ \Lex^{b} $ corresponds to the bulk dynamics and  the generators $\Lex^{\ell}$ and $\Lex^{r}$ correspond to non-conservative boundary dynamics playing the role of infinitely extended reservoirs. For $ \xalpha,  \xbeta \in (0,1)$, the action of $\Lex^{b},\Lex^{\ell} $ and $ \Lex^{r}$ on functions  $f:\chi_N \to \mathbb{R}$ is given by  
\begin{equation}
\label{generator}
\begin{split}
(\Lex^{b} f)(\eta) &= \cfrac{1}{2} \, \sum_{x,y \in \Lambda_N} p(y-x) [ f(\eta^{x,y}) -f(\eta)],\\
(\Lex^{\ell} f)(\eta) &=  \sum_{x \in \Lambda_N} r_N^-(\tfrac xN) c_{x}(\eta; \xalpha)  [f(\eta^x) - f(\eta)],\\
(\Lex^{r} f)(\eta)&= \sum_{x \in \Lambda_N} r_N^+(\tfrac xN)c_{x}(\eta; \xbeta) [f(\eta^x) - f(\eta)],
\end{split}
\end{equation}
where $p(\cdot)$ is given by \eqref{eq:prob} and $r_N^{\pm}$ are given by \eqref{eq:functions_r}. Above the configurations $\eta^{x,y}$ and $\eta^x$ are defined by
$$
\eta^{x,y}(z) = \eta(y)\mathbbm{1}_{z=x}+\eta(x)\mathbbm{1}_{z=y}+ \eta(z)\mathbbm{1}_{z\neq x,y}\quad \textrm{and}\quad  \eta^{x}(z) = (1-\eta(x))\mathbbm{1}_{z=x}+ \eta(z)\mathbbm{1}_{z\neq x}$$
and for any $x\in \Lambda_{N}$,   any $\eta \in \chi_{N}$  and $\rho \in \{\xalpha,\xbeta\}$ we have that 
\begin{equation}\label{c_rate}
c_{x}(\eta;\rho)=[ \eta(x) (1-\rho) + (1-\eta(x)) \rho].
\end{equation}

When $\kappa=0$, i.e. when the boundary reservoirs are not present, the exclusion process with long jumps conserves the number of particles $\sum_{x \in \Lambda_N} \eta (x)$ and  thus, as for the zero-range,  it is  ergodic when restricted to  the set of configurations with a fixed  number of particles. As a consequence, when $\kappa=0$, it has a one parameter family of invariant measures, which are the Bernoulli product measures with parameter $ \rho \in [0,1]$ denoted by $\{\mu_\rho\}_\rho$.

\subsection{Non-equilibrium stationary states (NESS)}

\subsubsection {NESS of the boundary driven zero range process}

We prove below that there exists a (unique) invariant measure, denoted by $\nu_{ss}^N$, for the boundary driven zero-range process $(\xi_t)_{t\ge 0}$. A remarkable fact is that this NESS has a product form.

Given a function $\vf_{N}: \Lambda_{N}\to [0, \varphi^*)$, we define on $\Omega_{N}$ the product probability measure $\tilde\nu_{N}:= \tilde\nu_{\vf_{N}} $ with marginal distributions given by  
\begin{equation}
\tilde\nu_{\varphi_N (\cdot)}\lbrace \xi \in \Omega_{N} : \xi(x) = k \rbrace = \dfrac{\left(\vf_{N}(x)\right)^{k}}{Z(\vf_{N}(x))g(k)!}. 
\end{equation}

\begin{comment}
Note that  
\begin{equation*}\label{subst.}
\begin{split}
\EE_{\nu_{N}}[g(\eta(x))] &= \int _{\Omega_{N}} g(\eta(x))d\nu(\eta)\\
                          &= \sum _{k=0}^{\infty}\dfrac{g(k)\vf_{N}^{k}(x)}{Z(\vf_{N}(x))g(k)!}\\
                          &= \sum _{k=1}^{\infty}\dfrac{\vf_{N}^{k}(x)}{Z(\vf_{N}(x))g(k-1)!}\\
                          &= \sum _{k=0}^{\infty}\dfrac{\vf_{N}^{k+1}(x)}{Z(\vf_{N}(x))g(k)!}\\
                          &= \vf_{N}(x).
\end{split}
\end{equation*}
\end{comment}

\begin{prop} \label{Invariant measure zr}
For $\zalpha, \zbeta \in (0,m^{*})$ with $\alpha \le \beta$, there exists a unique function  $$\vf_{N} :=\vf_{N}({\Phi(\zalpha),\Phi(\zbeta)}):\Lambda_N \to [\Phi(\alpha), \Phi(\beta)] \subset (0, \varphi^*)$$
{solving}  the traffic equation 
\begin{equation}
\label{Traffic equation}
\begin{split}
\vf_{N}(x)\Big(\sum_{y\in \Lambda_{N}}p(y-x)&+\kappa N^{-\theta}( r^{+}_{N}(\tfrac{x}{N})+r^{-}_{N}(\tfrac{x}{N})\big)\Big)\\
=& \sum_{y\in\Lambda_{N}}\vf_{N}(y)p(y-x)+ \frac{\kappa}{ N^{\theta}}\left(\Phi(\zbeta) r_{N}^{+}\left(\tfrac{x}{N}\right)+ \Phi(\zalpha) r_{N}^{-}\left(\tfrac{x}{N}\right)\right).
\end{split}
\end{equation} 
The product probability measure $\tilde\nu_{\varphi_N(\cdot)}$ associated to this profile $\varphi_N$ coincides with the NESS  $\nu_{ss}^N$ of the boundary driven zero-range process.
\end{prop}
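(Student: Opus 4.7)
The plan decomposes into three steps: (i) reduce the invariance of a product measure $\tilde\nu_{\vf(\cdot)}$ to a pointwise condition on $\vf$, (ii) identify this condition with the traffic equation \eqref{Traffic equation} and establish existence, uniqueness, and the range $[\Phi(\zalpha),\Phi(\zbeta)]$, (iii) upgrade invariance to uniqueness of the NESS using irreducibility.

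For (i), I take an arbitrary bounded $f:\Omega_N\to\RR$ and compute $\int L_N f\,d\tilde\nu_{\vf(\cdot)}$. The basic tool is the relation $g(k)\vf^k/g(k)!=\vf\cdot\vf^{k-1}/g(k-1)!$, which after a discrete change of variable yields the three product-measure identities
\begin{align*}
\int g(\xi(x)) f(\xi^{x,y})\,d\tilde\nu_\vf &= \tfrac{\vf(x)}{\vf(y)} \int g(\xi(y)) f(\xi)\,d\tilde\nu_\vf,\\
\int f(\xi^{x,+})\,d\tilde\nu_\vf &= \tfrac{1}{\vf(x)} \int g(\xi(x)) f(\xi)\,d\tilde\nu_\vf,\\
\int g(\xi(x)) f(\xi^{x,-})\,d\tilde\nu_\vf &= \vf(x) \int f\,d\tilde\nu_\vf.
\end{align*}
Plugging these into the three pieces of $L_N$ in \eqref{eq:generatorZR}, relabelling indices, and using the symmetry $p(z)=p(-z)$ to cancel the $\xi$-independent contribution $\sum_{x,y}p(y-x)(\vf(y)-\vf(x))$, one obtains after the decomposition $g(\xi(x))=(g(\xi(x))-\vf(x))+\vf(x)$ the identity
$$\int (L_N f)\,d\tilde\nu_\vf \;=\; \int f(\xi)\,\sum_{x\in\Lambda_N}\frac{g(\xi(x))-\vf(x)}{\vf(x)}\,\mathcal E_\vf(x)\,d\tilde\nu_\vf,$$
where $\mathcal E_\vf(x)$ is precisely the right-hand side minus the left-hand side of \eqref{Traffic equation}. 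Since $\tilde\nu_\vf$ has full support on $\Omega_N$ and the random variables $\{g(\xi(x))-\vf(x)\}_{x\in\Lambda_N}$ are linearly independent as functions of $\xi$ (they depend on disjoint coordinates), the vanishing of this integral for all bounded $f$ is equivalent to $\mathcal E_\vf(x)=0$ for every $x$, i.e.\ to $\vf$ solving the traffic equation.

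For (ii), I rewrite \eqref{Traffic equation} in the normalised form $\vf_N = K\vf_N + h$, where $K(x,y)=p(y-x)/\lambda(x)$ with $\lambda(x)=\sum_{y\in\Lambda_N}p(y-x)+\kappa N^{-\theta}(r_N^+(x/N)+r_N^-(x/N))$, and $h(x)$ collects the boundary sources. Since $\kappa>0$ and $p(z)>0$ for all $z\neq 0$ (so $r_N^\pm(x/N)>0$ for every $x\in\Lambda_N$), one has $\sum_y K(x,y)<1$, so $K$ is strictly substochastic, $I-K$ is invertible, and $\vf_N=(I-K)^{-1}h$ exists and is unique. A discrete maximum principle applied at a maximiser $x^\star$ of $\vf_N$ expresses $\vf_N(x^\star)$ as a convex combination of the values $\{\vf_N(y)\}_y\cup\{\Phi(\zalpha),\Phi(\zbeta)\}$, which using $\zalpha\le\zbeta$ and the monotonicity of $\Phi$ forces $\vf_N(x^\star)\le\Phi(\zbeta)$; symmetrically $\min\vf_N\ge\Phi(\zalpha)$. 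Hence $\vf_N$ takes values in $[\Phi(\zalpha),\Phi(\zbeta)]\subset(0,\vf^*)$ by \eqref{eq:alphabeta}, so $\tilde\nu_{\vf_N(\cdot)}$ is a well-defined probability measure on $\Omega_N$. Alternatively the same linear equation governs the stationary mean-density profile of the boundary-driven exclusion process, so existence, uniqueness and range could be imported directly from Appendix~\ref{app:hydrostaticsexclusion}.

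Finally, since the boundary generator injects and removes particles at every site of $\Lambda_N$ with strictly positive rates, the Markov chain $(\xi_t)$ is irreducible on the countable state space $\Omega_N$; together with the stationary probability measure just constructed this makes it positive recurrent, so $\zr=\tilde\nu_{\vf_N(\cdot)}$ is the \emph{unique} stationary distribution. The main technical hurdle lies in step (i): births and deaths interact asymmetrically with the product measure (one pulls out a factor $1/\vf(x)$, the other a factor $\vf(x)$), and the three contributions to $L_N$ must be combined carefully, together with the decomposition $g(\xi(x))=(g(\xi(x))-\vf(x))+\vf(x)$ and the symmetry of $p$, in order to collapse the whole expression into the single coefficient $\mathcal E_\vf(x)$ from which the traffic equation emerges.
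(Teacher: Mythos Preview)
Your proof is correct and in fact streamlines the paper's argument in one respect. Both proofs establish existence and uniqueness of $\vf_N$ via the same linear-algebraic mechanism (your strict substochasticity of $K$ is equivalent to the paper's strict diagonal dominance of $D_N-P_N$), and both obtain the range $[\Phi(\zalpha),\Phi(\zbeta)]$ by a maximum-principle evaluation at an extremiser. The genuine difference lies in the invariance computation. The paper performs the change of variables $\xi\to\xi^{x,y}$ on the sum over configurations and arrives at \emph{two} terms: one multiplied by $g(\xi(x))/\vf_N(x)$, which vanishes by the traffic equation, and a residual $\xi$-independent boundary term $\tfrac{\kappa}{N^\theta}\sum_x\big[(\vf_N(x)-\Phi(\zalpha))r_N^-(\tfrac xN)+(\vf_N(x)-\Phi(\zbeta))r_N^+(\tfrac xN)\big]$, which the paper kills via the separate symmetry identity $\vf_N(x)+\vf_N(N-x)=\Phi(\zalpha)+\Phi(\zbeta)$. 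Your decomposition $g(\xi(x))=(g(\xi(x))-\vf(x))+\vf(x)$, combined with the observation that $\sum_{x,y}p(y-x)(\vf(y)-\vf(x))=0$ by the evenness of $p$, collapses everything into the single coefficient $\mathcal E_\vf(x)$; the paper's residual term is precisely $-\sum_x\mathcal E_\vf(x)$, so it vanishes automatically once each $\mathcal E_\vf(x)=0$, and no symmetry argument is needed. What the paper's route buys is the symmetry relation \eqref{eq:symmetry} itself, which is used later (e.g.\ in Figure~3 and in the Neumann case); what your route buys is a shorter and more transparent proof of invariance. Finally, you add an explicit irreducibility/positive-recurrence step to conclude that $\tilde\nu_{\vf_N(\cdot)}$ is the \emph{unique} stationary distribution, a point the paper leaves implicit.
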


\begin{rem}
The assumption $\zalpha, \zbeta \in (0,m^{*})$ is crucial to establish this result. Otherwise, condensation appears and the invariant measure does not exist since mass is growing with time at the boundaries. We refer the reader to \cite{LMS} for more information in the case of the boundary driven zero-range process with nearest-neighbor jumps.   
\end{rem}

\subsubsection{NESS for the exclusion process with long jumps}\label{exc_sec}

The  boundary driven exclusion process generated by $\Lex$ (see \eqref{genexcl}) has a unique invariant measure that we will denote by $\ex$. If $\xalpha = \xbeta =\rho$  then $\ex= \mu_{\rho}$, the Bernoulli product measure with parameter $ \rho \in [0,1]$. Differently from the NESS $\zr$ of the boundary driven zero-range process, the non-equilibrium stationary state $\ex$ of the exclusion process is not product and no explicit form is known. In \cite{cpb,BGBJ,BJ,BPS} some macroscopic information on $\ex$ has been obtained and we refer the interested reader to  Appendix \ref{app:hydrostaticsexclusion}.

{The following proposition establishes the equality between the average of a certain observable of the zero range process in its NESS and the density of the exclusion process in its NESS.}

\begin{prop}{\label{pro_exzr}}
{Consider the boundary driven zero-range  whose generator is defined by \eqref{eq:generatorZR} and the exclusion process whose generator is defined in  \eqref{generator} with}
\begin{equation}\label{tildealphabeta}
{\tilde \alpha}=\frac{\Phi (\alpha)}{\Phi (\alpha)+\Phi(\beta)}, \quad {\tilde \beta}=\frac{\Phi (\beta)}{\Phi(\alpha)+\Phi(\beta)}.
\end{equation}
Then, for all $x\in \Lambda_N$, 
\begin{equation}
\label{eq:centralequation}
    {E}_{\zr} [g(\xi (x))] =
    \left(\Phi (\alpha)+ \Phi (\beta)\right)\, {E}_{\ex} [\eta (x)] =\varphi_N (x),
\end{equation}
where $\varphi_N$ is the solution of the traffic equation \eqref{Traffic equation}.
\end{prop}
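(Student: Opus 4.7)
The plan is to prove the two equalities in \eqref{eq:centralequation} separately, with the second one being essentially immediate and the first requiring the key structural comparison between the two systems.

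First, the identity $E_{\nu_{ss}^N}[g(\xi(x))] = \varphi_N(x)$ is direct from Proposition \ref{Invariant measure zr}. Indeed, that proposition asserts $\nu_{ss}^N = \tilde\nu_{\varphi_N(\cdot)}$ is the inhomogeneous product measure with parameters $\varphi_N(x)$, and a one-line computation with the marginal of $\tilde\nu_\varphi$ (that was already displayed just before Proposition \ref{Invariant measure zr} for constant $\varphi$) shows $E_{\tilde\nu_\varphi}[g(\xi(x))] = \varphi$ at every site; so the second equality in \eqref{eq:centralequation} is automatic.

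The content lies in the first equality. I would derive it by showing that the function $\psi_N(x) := (\Phi(\alpha)+\Phi(\beta))\, E_{\mu_{ss}^N}[\eta(x)]$ satisfies exactly the traffic equation \eqref{Traffic equation}, and then invoke the uniqueness part of Proposition \ref{Invariant measure zr}. Concretely, I would apply $\mathcal{L}_N$ to the coordinate function $\eta \mapsto \eta(x)$. Using the symmetry $p(y-x) = p(x-y)$, the bulk part gives $(\mathcal{L}_N^b \eta)(x) = \sum_{y\in\Lambda_N} p(y-x)[\eta(y)-\eta(x)]$, and a short case analysis of the Glauber-type rate $c_x(\eta;\rho)(1-2\eta(x)) = \rho - \eta(x)$ yields
\begin{equation*}
(\mathcal{L}_N^r \eta)(x) = r_N^+(\tfrac{x}{N})(\tilde\beta - \eta(x)), \qquad (\mathcal{L}_N^\ell \eta)(x) = r_N^-(\tfrac{x}{N})(\tilde\alpha - \eta(x)).
\end{equation*}
Stationarity of $\mu_{ss}^N$ gives $E_{\mu_{ss}^N}[\mathcal{L}_N \eta(x)] = 0$, which, after rearrangement, is the exact analogue of \eqref{Traffic equation} with $\rho_N(x) := E_{\mu_{ss}^N}[\eta(x)]$ in place of $\varphi_N(x)$ and $(\tilde\alpha,\tilde\beta)$ replacing $(\Phi(\alpha),\Phi(\beta))$ in the reservoir source terms.

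Multiplying that equation by $\Phi(\alpha)+\Phi(\beta)$ and using the definition \eqref{tildealphabeta} of $\tilde\alpha,\tilde\beta$, which precisely gives $(\Phi(\alpha)+\Phi(\beta))\tilde\alpha = \Phi(\alpha)$ and $(\Phi(\alpha)+\Phi(\beta))\tilde\beta = \Phi(\beta)$, turns the equation for $\rho_N$ into the traffic equation \eqref{Traffic equation} with $\psi_N$ in the role of $\varphi_N$. Since Proposition \ref{Invariant measure zr} asserts uniqueness of the solution to \eqref{Traffic equation}, we conclude $\psi_N \equiv \varphi_N$, which is the first equality in \eqref{eq:centralequation}. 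The only technical point that needs a word of care is that the uniqueness statement should be applied to bounded solutions; this is harmless since both $\varphi_N$ and $\psi_N$ are obviously bounded in $[0, \Phi(\alpha)+\Phi(\beta)]$. The main conceptual step — and the only mildly delicate one — is the identification of $\tilde\alpha,\tilde\beta$ in \eqref{tildealphabeta} as the correct parameters forcing the rescaled exclusion linear system to coincide with the zero-range traffic equation; everything else is a direct computation with the generators.
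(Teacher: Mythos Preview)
Your proof is correct and follows essentially the same approach as the paper: compute $E_{\nu_{ss}^N}[g(\xi(x))]=\varphi_N(x)$ from the product form of $\nu_{ss}^N$, then apply $\mathcal{L}_N$ to $\eta\mapsto\eta(x)$, use stationarity of $\mu_{ss}^N$, and invoke the uniqueness of the solution to the traffic equation. Your version is simply more explicit in the intermediate computations (the identity $c_x(\eta;\rho)(1-2\eta(x))=\rho-\eta(x)$ and the rescaling by $\Phi(\alpha)+\Phi(\beta)$); note also that your caveat about bounded solutions is unnecessary, since the uniqueness in Proposition~\ref{Invariant measure zr} comes from invertibility of the finite matrix $D_N-P_N$ and hence holds for all solutions.
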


\begin{rem}
We observe that there exists, in the case of periodic boundary conditions, also a {\bf dynamical} mapping between the zero-range process and the exclusion process, which was first introduced in \cite{Kipnis} for a special choice of the rate $g(k)=\textbf{1}_{\{k> 0\}}$. In that mapping, the number of particles in the zero-range process becomes the number of holes between consecutive particles in the exclusion process. This mapping holds only if the zero-range dynamics is with nearest-neighbors jumps because it is crucial to have an order of the  particles in the exclusion dynamics, which is also with nearest-neighbors jumps. In \cite{evans0} the author extended the previous mapping to the case where $g$ is general (even inhomogeneous) but still with nearest-neighbors jumps. In the associated exclusion dynamics the rate to jump to the neighbor depends on the number of holes between the jumping particle and the next particle in the direction of the jump.
The relationship between the two processes that we derive in this article is very different and less powerful: it is a {\bf static} mapping in the sense of expectations as given in \eqref{eq:centralequation} and not a {\bf dynamical} mapping, which holds only for nearest-neighbors dynamics. 
\end{rem}

%We observe that the condition $\Phi(\alpha), \Phi(\beta) \in (0,1)$ is added here so that we are able to define the boundary driven exclusion process with long jumps (since the densities of the reservoirs in the exclusion process  cannot exceed $1$). However it is easy to avoid this restriction by considering, for any $M>0$, an exclusion process with state space $\{0,M\}^{\Lambda_N}$, i.e. the occupation numbers of the particles are $0$ or $M$ instead from being $0$ or $1$. Therefore, in this latter case, it is possible to have $\Phi(\alpha), \Phi(\beta) \in (0,M)$.

\subsection{Results}

\subsubsection{Fractional hydrostatics}

In this  section we obtain  the  hydrostatic limit of the boundary driven ZRP, i.e. we derive the form of the macroscopic density profile of the boundary driven ZRP in its NESS. In order to do it we need to introduce the definition of (weak) solutions to the hydrostatic equations of the boundary driven exclusion process with long jumps. First we need to define some sets of test functions. To that end, for $m\in\mathbb N$,  let  $C^{m}([0,1])$   (resp. $C_{c}^{m}((0,1))$)  be the set of all $m$ continuously differentiable real-valued  functions defined on $[0,1]$ (resp. and with compact support contained in $(0,1)$). We also use the notation $\langle\cdot,\cdot\rangle$ for the inner product in $L^2([0,1])$  and the corresponding norm is denoted by $\| \cdot \|.$ Let us now introduce the operators involved in the equations and the fractional Sobolev spaces that we will deal with. 

The regional fractional Laplacian $\mathbb L$ on the interval $[0,1]$ is the operator acting on functions $f:[0,1] \rightarrow \RR$ such that
\begin{equation*}
\int_{0}^1 \cfrac{|f(u)|}{(1 +|u|)^{1+\gamma}} du < \infty
\end{equation*}
as
\begin{equation}
\label{definition}
({\bb L} f)(u) = c_\gamma  \lim_{\epsilon \to 0} \int_0^1 {\bb 1}_{|u-v| \ge \epsilon} \cfrac{f(v) -f(u)}{|u-v|^{1+\gamma}} dv,
\end{equation}
for any $u \in [0,1]$ if  the limit exists. We note that $\mathbb L f$ is well defined, if, for example,  $f \in C^2([0,1])$. We also  introduce the semi inner-product $\langle\cdot,\cdot\rangle_{\gamma/2}$, and the corresponding semi-norm $||\cdot||_{\gamma/2}= \langle \cdot, \cdot \rangle_{\gamma/2}$, defined by
\begin{equation}
\langle f, g \rangle_{\gamma/2} =  \cfrac{c_{\gamma}}{2} \iint_{[0,1]^2} \cfrac{(f(u) -f(v)) (g(u) -g(v))}{|u-v|^{1+\gamma}} du dv,
\end{equation}  
where $f,g:[0,1] \rightarrow \RR$ are functions such that $\| f\|_{\gamma/2}<\infty$ and $\|g\|_{\gamma/2}<\infty$.

\begin{definition}
	\label{DefSobolevspace}
	Let  $\mathcal{H}^{\gamma/2}:=\mathcal{H}^{\gamma/2}([0,1])$ be the Sobolev space containing all the functions $g\in L^2([0,1])$ such that $\| g \|_{\gamma/2} <\infty$, which is a Hilbert space endowed with the norm $\|\cdot\|_{{\mc H}^{\gamma/2}}$ defined  by
	$$\| g \|_{\mathcal{H}^{\gamma/2}}^{2}:= \| g \|^{2} + \| g \|_{\gamma/2}^{2} .$$
	If $\gamma \in (1,2)$, by Theorem 8.2. of \cite{DNPV}, its elements coincide a.e. with continuous functions on $[0,1]$. 
	\end{definition}

Recall \eqref{tildealphabeta}. We define  two functions $V_0, V_1:(0,1)\to (0,\infty)$ by  
\begin{equation}\label{eq:useful}
V_1(u)=r^-(u)+r^+(u)\quad \textrm{and}\quad V_0(u)= \tilde\zalpha r^-(u)+\ \tilde\zbeta r^+(u)\end{equation}  where the functions $r^{\pm}: (0,1) \to (0, \infty)$ are given by
\begin{equation} \label{def:rpm}
r^- (u)=c_\gamma \gamma^{-1} u^{-\gamma},\quad  r^+ (u) = c_\gamma \gamma^{-1} (1-u)^{-\gamma}.
\end{equation}

\medskip 

We present now the different macroscopic equations which will appear in our study. The proof of the hydrostatic limit require to formulate these equations in a weak sense, i.e. in a distributional sense.

%%%%%%%%%%%%%%%%%%%% Dirichlet reaction  %%%%%%%%%%%%%%%%%%
\begin{definition}
Let $\gamma \in (0,2)$ and $\hat \kappa>0$. We say that $\rho:[0,1]\to [0,1]$ is a weak solution of the stationary regional fractional reaction-diffusion equation with non-homogeneous Dirichlet boundary conditions given by
 \begin{equation}\label{DRex_ZR}
\begin{cases}
\mathbb{L}\rho(u)  +\hat \kappa(  V_{0}(u )- V_{1}(u)\rho(u))=0, \quad \forall u\in (0,1) ,\\
\rho(0)= \tilde\alpha,\quad \rho(1)= \tilde\beta,
\end{cases}
\end{equation}
if 
\begin{itemize}
    \item [a)] $ \rho \in \mathcal{H}^{\gamma/2}$. 
    \item [b)] $\displaystyle\int \dfrac{(\tilde\alpha-\rho(u))^2}{u^\gamma} +\dfrac{(\tilde\beta-\rho(u))^2}{(1-u)^\gamma}du<\infty.$
    \item [c)] For all $G\in C_c^\infty((0,1))$  we have that 
    $F_{RD}(\rho, G) := \langle\rho,\mathbb{L}G\rangle +\hat \kappa (\langle G,V_0\rangle)-\langle\rho, GV_1\rangle) = 0.$
\end{itemize}
\end{definition}

\begin{definition}
Let $\gamma \in (1,2)$. We say that $\rho:[0,1]\to [0,1]$ is a weak solution of the stationary regional fractional diffusion equation 
 with  non-homogeneous Dirichlet boundary condition
 given by
 \begin{equation}\label{Dex_ZR}
\begin{cases}
\mathbb{L}\rho(u) =0, \quad \forall u\in (0,1),\\
\rho(0)= \tilde\alpha,\quad 
\rho(1)= \tilde\beta,
\end{cases}
\end{equation}
if 
\begin{itemize}
    \item [a)] $ \rho \in \mathcal{H}^{\gamma/2}$.
    \item [b)] For all {$G\in C_c^\infty([0,1])$} we have that 
   $ F_{Dir}(\rho, G) := \langle\rho,\mathbb{L}G\rangle  = 0.$
    \item [c)] $\rho(0)=\tilde\alpha$ and $\rho(1)=\tilde\beta$.
\end{itemize}
\end{definition}

\begin{rem}
Since $\gamma \in (1,2)$, as mentioned previously in Definition \ref{DefSobolevspace}, if $\rho \in {\mathcal H}^{\gamma/2}$, then it coincides a.e. with a continuous function on $[0,1]$ so that item c) in last definition makes sense.
\end{rem}

%%%%%%%%%%%%%%%%%%%%%%%%%%%%%%%%%%%%%%%%%%%%%%

%%%%%%%%%%%%%%%%%%%% Robin %%%%%%%%%%%%%%%%%%

\begin{definition} 
Let $\gamma \in (1,2)$ and $\hat\kappa > 0$. We say that $\rho:[0,1]\to [0,1]$ is a weak solution of the stationary regional fractional diffusion equation 
 with  fractional Robin boundary conditions
 
 \begin{equation}
 \label{Rex_ZR}
\begin{cases}
\mathbb{L}\rho(u) =0, \quad \forall u\in (0,1),\\
\chi_{\gamma}(D^\gamma\rho)(0)= {\hat \kappa}  (\tilde\alpha-\rho(0)),\\
\chi_{\gamma}(D^\gamma\rho)(1)= {\hat \kappa} (\tilde\beta-\rho(1)),
\end{cases}
\end{equation}
if 
\begin{itemize}
    \item [a)]  $ \rho \in \mathcal{H}^{\gamma/2}$.
    \item [b)]For all $G\in C^\infty([0,1])$ we have that 
   $$ F_{Rob}(\rho, G,\hat \kappa) := \langle\rho,\mathbb{L}G\rangle -  \hat\kappa \left( G(0)(\tilde\alpha-\rho(0)) + G(1)(\tilde\beta-\rho(1)\right) = 0.$$
\end{itemize}
\end{definition}
Above
\begin{equation}\label{eq:frac_deriv}
(D^{\gamma}\rho)(0) = \lim _{u\to 0^{+}} \rho'(u)u^{2-\gamma}\quad \textrm{and}\quad (D^{\gamma}\rho)(0) = \lim _{u\to 1^{-}} \rho'(u)(1-u)^{2-\gamma}\end{equation}
and $\chi_{\gamma}$ is a constant defined below equation (7.4) in \cite{GM}. 

%%%%% Neumann %%%%%%%%%%%%%%%

\begin{definition} 
Let $\gamma \in (0,2)$ and $\hat M \in [0,1]$. We say that $\rho:[0,1]\to [0,1]$ is a weak solution of the stationary regional fractional diffusion equation 
 with  fractional Neumann boundary conditions and total mass $\hat M$ 
 
 \begin{equation}
 \label{Nex_ZR}
\begin{cases}
\mathbb{L}\rho(u) =0, \quad \forall u\in (0,1),\\
(D^\gamma\rho)(0)=0,\\
(D^\gamma\rho)(1)=0,
\end{cases}, \quad \int_0^1\rho(u)\,du={\hat M}.
\end{equation}
if 
\begin{itemize}
    \item [a)]  $ \rho \in \mathcal{H}^{\gamma/2}$.
    \item [b)]For all $G\in C^\infty([0,1])$ we have that 
   $$ F_{Neu}(\rho, G) := \langle\rho,\mathbb{L}G\rangle = 0.$$
{\item   [c)] $\int_0^1\rho(u)\,du={\hat M}.$}
\end{itemize}
\end{definition}

\begin{rem}
Each of the previous equations has the property that their weak solutions are unique and this is a crucial feature for the proof of the  hydrostatic limit. The uniqueness result is proved in detail in \cite{cpb,BJ,BPS}, apart the Neumann case, which we present below.
\end{rem}
\begin{lem}
\label{lem:214}
The weak solution of \eqref{Nex_ZR} in the sense given above is unique and equal to $\hat M$.
\end{lem}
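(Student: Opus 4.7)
The plan is to first verify existence by inspection and then settle uniqueness by a Green-type identity for the regional fractional Laplacian.

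\textbf{Existence.} The constant function $\rho \equiv \hat M$ lies in $\mathcal{H}^{\gamma/2}$ (its $L^2$ norm is finite and the seminorm $\|\cdot\|_{\gamma/2}$ vanishes on constants), and it obviously satisfies the mass constraint. Since $(\mathbb{L} G)(u)$ integrates constants against itself in a symmetric way, we have $\langle \hat M, \mathbb{L} G\rangle = \hat M \int_0^1 (\mathbb{L} G)(u)\, du$, which vanishes for every $G \in C^\infty([0,1])$ by the symmetry of the kernel (Fubini exchanges the roles of $u$ and $v$ and flips sign). Hence $F_{Neu}(\hat M, G) = 0$ for all test functions.

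\textbf{Uniqueness.} Suppose $\rho_1$ and $\rho_2$ are two weak solutions and set $w = \rho_1 - \rho_2 \in \mathcal{H}^{\gamma/2}$. By linearity, $w$ satisfies $\langle w, \mathbb{L} G\rangle = 0$ for every $G \in C^\infty([0,1])$, together with $\int_0^1 w(u)\, du = 0$. The key tool is the Green identity for the regional fractional Laplacian: for $G \in C^\infty([0,1])$ and any $f \in \mathcal{H}^{\gamma/2}$, Fubini and the standard symmetrization trick yield
\begin{equation*}
\langle f, \mathbb{L} G\rangle \;=\; -\,\frac{c_\gamma}{2}\iint_{[0,1]^2} \frac{(f(u)-f(v))(G(u)-G(v))}{|u-v|^{1+\gamma}}\, du\, dv \;=\; -\langle f, G\rangle_{\gamma/2}.
\end{equation*}
Applying this with $f = w$ gives $\langle w, G\rangle_{\gamma/2} = 0$ for all $G \in C^\infty([0,1])$.

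\textbf{Closing the argument.} Since $C^\infty([0,1])$ is dense in $\mathcal{H}^{\gamma/2}$, there exists a sequence $G_n \in C^\infty([0,1])$ with $\|G_n - w\|_{\mathcal{H}^{\gamma/2}} \to 0$. By Cauchy--Schwarz in the seminorm,
\begin{equation*}
\bigl| \langle w, w\rangle_{\gamma/2} - \langle w, G_n\rangle_{\gamma/2}\bigr| \;\le\; \|w\|_{\gamma/2}\, \|w - G_n\|_{\gamma/2} \;\longrightarrow\; 0,
\end{equation*}
hence $\|w\|_{\gamma/2}^2 = \langle w, w\rangle_{\gamma/2} = 0$. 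The vanishing of the Gagliardo seminorm forces $w$ to be constant almost everywhere on $[0,1]$, and the mass condition $\int_0^1 w = 0$ then forces $w \equiv 0$. Thus $\rho_1 = \rho_2$ and the only weak solution is $\rho \equiv \hat M$.

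\textbf{Main obstacle.} The only non-routine ingredient is the Green identity together with the density of $C^\infty([0,1])$ in $\mathcal{H}^{\gamma/2}$; both are standard for the regional fractional Sobolev space but require a brief justification (the symmetrization step and an approximation argument), and these are the only points where care is needed.
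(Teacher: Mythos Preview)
Your proof is correct and follows essentially the same approach as the paper's: both verify that the constant $\hat M$ is a solution, take the difference $w$ of two solutions, use the integration-by-parts (Green) identity to convert $\langle w,\mathbb L G\rangle=0$ into $\langle w,G\rangle_{\gamma/2}=0$, then approximate $w$ in $\mathcal H^{\gamma/2}$ by smooth functions to conclude $\|w\|_{\gamma/2}=0$ and hence $w\equiv 0$ via the mass constraint. The only stylistic difference is that the paper first presents the formal computation with $G=\bar\rho$ and then invokes the approximation, whereas you pass directly through the density argument with an explicit Cauchy--Schwarz step.
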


\begin{proof}
Observe first that the constant function $\hat M$ is a solution. Let us consider two weak solutions $\rho^1,\rho^2$ of \eqref{Nex_ZR} with $\hat\kappa=0$ and let $\bar\rho=\rho^1-\rho^2$.  Let us assume first that one can take $G=\bar\rho$ in $F_{Neu}(\bar\rho,G) $ to get  $ F_{Neu}(\bar\rho, \bar\rho) = \langle\bar\rho,\mathbb{L}\bar\rho\rangle  = 0.$ From the integration by parts formula, see, for example, Proposition 2.1 in \cite{BPS}, we get that $\langle \bar\rho, \bar\rho\rangle_{\gamma/2}$=0, which implies that $\bar\rho$ is constant almost everywhere. From item c) of the definition of weak solution, we conclude that $\bar\rho(u)=0$ for almost everywhere $u\in[0,1]$. Now, we just have to redo the argument by considering a sequence $\{G_k\}_{k\geq 1}$ of functions in $C^\infty([0,1])$ converging to $\bar\rho$ with respect to the norm $\|\cdot\|_{{\mc H}^{\gamma/2}}$, and the proof ends.
\end{proof}

Recall that $d>0$ is the parameter defined in \eqref{eq:d} and that by Proposition \ref{Invariant measure zr} the function $\varphi_N$ takes values in $[\Phi(\alpha), \Phi(\beta)]$. The following theorem is a form of hydrostatic limit for the non-equilibrium stationary boundary driven exclusion process with long jumps.

\begin{thm}\label{thm:gen_conv}
For any continuous function $G:[0,1]\to \RR$  and any function $F:[\Phi(\alpha), \Phi(\beta)] \times [0,1]\rightarrow \RR$ which is Lipschitz in the first component, we have that 
$$\lim _{N\to\infty} \left\vert {\dfrac{1}{\# \Lambda_N}} \sum_{x\in \Lambda_{N} }  G(\tfrac{x}{N}) F(\varphi_{N}(x),\tfrac{x}{N}) - \int _{0} ^{1} G(u)F\left( \left[ \Phi(\alpha)+ \Phi (\beta) \right]\bar\rho(u),u \right)du\right\vert = 0$$
where $\bar \rho: [0,1] \to [0,1]$ is the measurable function defined by  
\begin{itemize}
\item [a)] for $\theta <0$ and $\gamma\in (0,2)$,  
$
{\bar  \rho} (u) = \dfrac{V_{0}(u)}{V_{1}(u)}.
$

\item  [b)] for $\theta =0$ and $\gamma\in (0,2)-\{1\}$,  $\bar \rho(\cdot)$ is the unique weak  solution of  \eqref{DRex_ZR}, with $\hat \kappa=\kappa$.

\item  [c)] for $\theta \in (0,\gamma-1)$ and $\gamma\in (1,2)$, $\bar \rho(\cdot)$ is the unique  weak solution of \eqref{Dex_ZR}.

 \item  [d)] for  $\theta = \gamma-1$ and $\gamma\in (1,2)$, $\bar \rho(\cdot)$ is the weak  solution of \eqref{Rex_ZR} with {$\hat \kappa=\kappa d$}.

\item  [e)]  for $\theta > 0$ and  {$\gamma\in (0, 1]$} or for $\theta >  \gamma- 1$ and  $\gamma\in (1, 2)$, $\bar \rho(\cdot)$ is the weak solution of \eqref{Rex_ZR} with {$\hat \kappa =0$}.
\end{itemize}

Moreover the profile 
$\bar \rho (\cdot)$ takes values in $[\tilde \alpha, \tilde \beta]$.
\end{thm}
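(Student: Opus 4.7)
The plan is to reduce the theorem, via the static identity of Proposition \ref{pro_exzr}, to the hydrostatic limit for the boundary driven exclusion process with long jumps recalled in Appendix \ref{app:hydrostaticsexclusion}, and then to handle the nonlinearity $F$ by a Lipschitz comparison. Set $C := \Phi(\alpha)+\Phi(\beta)$ and let $\rho_N(x) := E_{\ex}[\eta(x)]$ be the density profile of the exclusion NESS with boundary densities $\tilde\alpha,\tilde\beta$ as in \eqref{tildealphabeta}. Proposition \ref{pro_exzr} gives the key identity $\varphi_N(x) = C\,\rho_N(x)$, so the sum in the theorem can be rewritten with $\rho_N$ in place of $\varphi_N$.

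Denoting by $L$ the Lipschitz constant of $F$ in the first variable, I split
\[
\tfrac{1}{\#\Lambda_N}\sum_{x \in \Lambda_N} G(\tfrac{x}{N}) F(\varphi_N(x), \tfrac{x}{N}) = \tfrac{1}{\#\Lambda_N}\sum_{x \in \Lambda_N} G(\tfrac{x}{N}) F(C\bar\rho(\tfrac{x}{N}), \tfrac{x}{N}) + R_N,
\]
with
\[
|R_N| \le L C \|G\|_\infty \cdot \tfrac{1}{\#\Lambda_N} \sum_{x \in \Lambda_N} \bigl|\rho_N(x) - \bar\rho(\tfrac{x}{N})\bigr|.
\]
This reduces the proof to two tasks: (i) the Riemann sum in the first term on the right converges to $\int_0^1 G(u) F(C\bar\rho(u), u)\,du$, and (ii) the $L^1$-type error on the right tends to zero.

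Task (i) is standard once $\bar\rho$ is shown to be continuous on $[0,1]$. In regime (a), $\bar\rho = V_0/V_1$ is explicitly continuous on $(0,1)$; in the regimes (c), (d), and the portion of (b) with $\gamma \in (1,2)$, membership in $\mathcal H^{\gamma/2}$ already gives continuity by Definition \ref{DefSobolevspace}; the remaining subcases ($\gamma \le 1$) are handled either by elliptic regularity for $\mathbb L$ applied to the equation at hand, or by the explicit observation that the Neumann limit in (e) is constant (Lemma \ref{lem:214}).

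The main obstacle is task (ii). The idea is to invoke the hydrostatic limit for the boundary driven exclusion process from Appendix \ref{app:hydrostaticsexclusion}, which gives $\tfrac{1}{\#\Lambda_N}\sum_x H(\tfrac{x}{N}) \rho_N(x) \to \int_0^1 H(u)\bar\rho(u)\,du$ for every continuous $H$, and to upgrade this weak convergence to an $L^1$-type convergence of the discretized profile. The cleanest route is to show pointwise convergence $\rho_N(\lfloor uN\rfloor) \to \bar\rho(u)$ for a.e. $u \in (0,1)$ by regarding the linear elliptic equation satisfied by $\rho_N$ (obtained by applying $\mathcal L_N$ to $\eta(x)$ in the NESS, and equivalent to \eqref{Traffic equation} under the correspondence $\varphi_N = C\rho_N$) as a discretization of the appropriate regional fractional boundary value problem, and to exploit the discrete maximum principle for $\mathcal L_N$; the bound $0 \le \rho_N \le 1$ then upgrades pointwise to $L^1$ convergence by dominated convergence, yielding (ii). The "Moreover" statement follows from the discrete maximum principle applied to \eqref{Traffic equation}: $\varphi_N(x) \in [\Phi(\alpha), \Phi(\beta)]$ for every $x\in\Lambda_N$ (as stated in Proposition \ref{Invariant measure zr}), hence $\rho_N(x) \in [\tilde\alpha, \tilde\beta]$, and passing to the limit via (ii) gives $\bar\rho \in [\tilde\alpha, \tilde\beta]$.
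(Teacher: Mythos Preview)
Your reduction to the two tasks (i) and (ii) is clean, and task (i) is handled exactly as in the paper (via Lemma \ref{lem:continuity}). The gap is in task (ii): you need
\[
\frac{1}{\#\Lambda_N}\sum_{x\in\Lambda_N}\bigl|\rho_N(x)-\bar\rho(\tfrac{x}{N})\bigr|\longrightarrow 0,
\]
i.e.\ strong $L^1$-type convergence of the discrete profile to $\bar\rho$, and you only have at your disposal the \emph{weak} hydrostatic statement of Theorem \ref{th:HLM}. Weak convergence of a bounded sequence does not by itself upgrade to $L^1$; some compactness is needed. Your proposed route---``regard \eqref{Traffic equation} as a discretization of the fractional boundary value problem and use the discrete maximum principle''---does not supply that compactness. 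The maximum principle gives the uniform bound $\rho_N\in[\tilde\alpha,\tilde\beta]$ (which you correctly use for the ``Moreover'' part), but it does not give equicontinuity or pointwise convergence. A workable fix along your lines would be to prove that $x\mapsto\rho_N(x)$ is monotone (via a discrete comparison principle, using $\tilde\alpha\le\tilde\beta$) and then invoke a Helly-type argument to pass from weak to a.e.\ convergence; but this monotonicity is an additional, nontrivial ingredient you have not stated or proved.

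The paper circumvents this issue by a genuinely different device: it introduces the block average $A[\varphi_N,I_{\varepsilon N}(x)]$ over a mesoscopic box of size $2\varepsilon N$ and uses the \emph{dynamical} replacement lemmas of \cite{BPS} (Lemmas 5.3--5.5 there) to control $E_{\ex}\big[\eta(x)-A[\eta,I_{\varepsilon N}(x)]\big]$ in the limit $N\to\infty$ then $\varepsilon\to 0$. This trades the hard analytic question of pointwise convergence of $\rho_N$ for an entropy/Dirichlet-form estimate on the exclusion dynamics, which is already available. After that replacement, the block average is essentially $\langle\pi^N_{ex},\iota^\varepsilon_{x/N}\rangle$ and the weak hydrostatic limit of Theorem \ref{th:HLM} suffices. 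Your decomposition is more direct and avoids the two-scale limit, but as written it is incomplete at precisely the point where the paper's argument injects the replacement lemma.
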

{Above,  $\#$ is  the counting measure. }
\begin{figure}[h]
    \centering
    \includegraphics{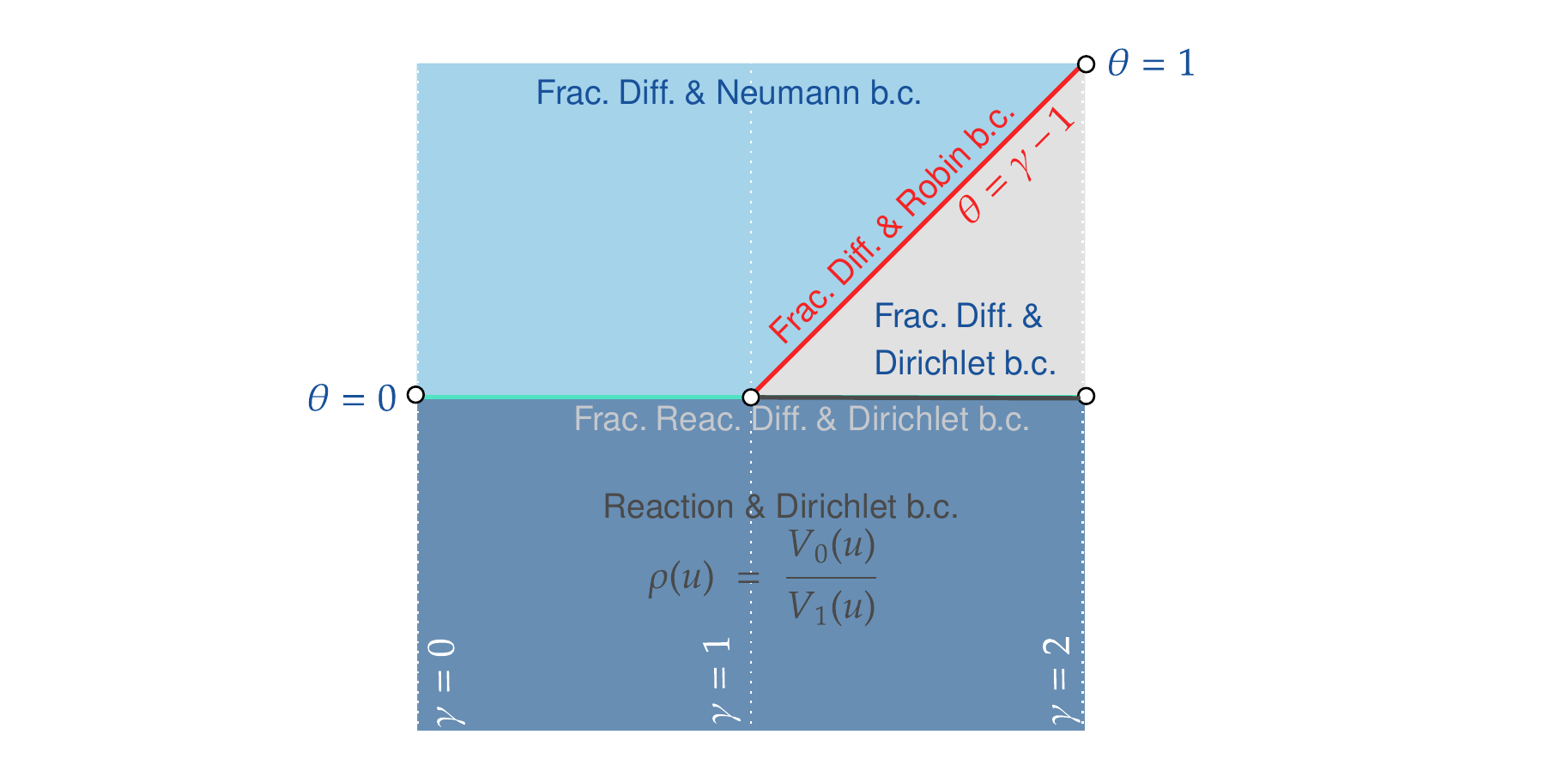}\caption{Hydrostatic behavior depending on the values of $ \theta$ (vertical axis) and  $\gamma$ (horizontal axis). }
\end{figure}
Last theorem, proved in Section \ref{sec:4}, permits to prove  the hydrostatics, in mean{\footnote{{A sequence of integrable random variables $(X_N)_{N}$ is said to converge in mean if $\{\mathbb E [X_N)]\}_{N}$ converges. }}}, of the boundary driven ZRP.

\begin{cor}[Hydrostatic limit in mean]
\label{cor-hydrostat} 
\quad

For any continuous function $G:[0,1]\to \RR$, we have that
$$\lim _{N\to\infty} \left\vert {\dfrac{1}{\# \Lambda_N}} \sum_{x\in \Lambda_{N} }  G(\tfrac{x}{N}) {E}_{\nu_{ss}^N} \left[ \xi (x) \right]  - \int _{0} ^{1} G(u) {\bar m}(u)  du\right\vert = 0$$
where the hydrostatic profile $\bar m(\cdot)$ of the boundary driven zero-range process with long jumps is defined by
\begin{equation}
\label{eq:mprofile}
\forall u \in [0,1], \quad {\bar m}(u)=R\left[\left( \Phi (\alpha) + \Phi (\beta)\right) {\bar \rho} (u) \right]
\end{equation}
and  $\bar \rho(\cdot)$ is the hydrostatic profile of the boundary driven exclusion process with long jumps given in Theorem \ref{thm:gen_conv}.
\end{cor}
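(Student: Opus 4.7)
The strategy is to reduce Corollary \ref{cor-hydrostat} to a direct application of Theorem \ref{thm:gen_conv} by exploiting the product structure of the NESS established in Proposition \ref{Invariant measure zr}.

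First, I would use that $\nu_{ss}^N = \tilde\nu_{\varphi_N(\cdot)}$ is a product measure with single-site marginal $\tilde\nu_{\varphi_N(x)}$. Combining this with the definition of $R(\cdot)$ gives, for every $x \in \Lambda_N$,
$$E_{\nu_{ss}^N}[\xi(x)] = E_{\tilde\nu_{\varphi_N(x)}}[\xi(x)] = R(\varphi_N(x)),$$
so the Riemann-like sum in the statement can be rewritten as
$$\frac{1}{\#\Lambda_N} \sum_{x\in\Lambda_N} G(\tfrac{x}{N}) R(\varphi_N(x)).$$

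Next, I would introduce the function $F:[\Phi(\alpha),\Phi(\beta)]\times[0,1]\to\R$ defined by $F(\varphi,u):=R(\varphi)$, which does not depend on $u$. To invoke Theorem \ref{thm:gen_conv}, it remains only to check that $F$ is Lipschitz in its first variable on $[\Phi(\alpha),\Phi(\beta)]$. This follows because, by \eqref{eq:Rprime} and the formula $R'(\varphi)=E_{\tilde\nu_\varphi}[(\xi(x)-R(\varphi))^2]$ given just after \eqref{eq:Rprime}, the derivative $R'$ is continuous on the compact interval $[\Phi(\alpha),\Phi(\beta)] \subset [0,\varphi^*)$, hence bounded. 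Thus $R$ is Lipschitz on that interval, and the hypothesis of Theorem \ref{thm:gen_conv} is satisfied.

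Applying Theorem \ref{thm:gen_conv} with this choice of $F$ then yields
$$\lim_{N\to\infty}\left|\frac{1}{\#\Lambda_N}\sum_{x\in\Lambda_N} G(\tfrac{x}{N}) R(\varphi_N(x)) - \int_0^1 G(u)\, R\bigl((\Phi(\alpha)+\Phi(\beta))\bar\rho(u)\bigr)\,du\right| = 0,$$
and the integrand on the right is precisely $G(u)\bar m(u)$ by the definition \eqref{eq:mprofile} of $\bar m$. This completes the argument.

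There is no real obstacle here: the entire content of the corollary is already packaged inside Theorem \ref{thm:gen_conv} and the product form of $\nu_{ss}^N$. The only nontrivial observation, essentially a one-liner, is the boundedness of $R'$ on $[\Phi(\alpha),\Phi(\beta)]$, which is guaranteed by the assumption $\alpha,\beta\in(0,m^*)$ through \eqref{eq:alphabeta}.
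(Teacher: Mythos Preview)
Your proposal is correct and essentially identical to the paper's own proof: both compute $E_{\nu_{ss}^N}[\xi(x)]=R(\varphi_N(x))$ from the product form of the NESS, then apply Theorem \ref{thm:gen_conv} with $F(\varphi,u)=R(\varphi)$. The only cosmetic difference is that the paper justifies the Lipschitz property by noting $R$ is analytic on $(0,\varphi^*)\supset[\Phi(\alpha),\Phi(\beta)]$, whereas you argue via boundedness of $R'$ on the compact interval; both are valid.
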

\begin{proof}
Recall first that because of the form of $\nu_{ss}^N$ we have that 
\begin{equation}
\label{eq:007}
    {E}_{\nu_{ss}^N} [\xi(x)] =R (\vf_N (x))
\end{equation}
for any $x\in \Lambda_N$. We apply Theorem \ref{thm:gen_conv}  with the function $F:[\Phi(\alpha), \Phi(\beta)] \times [0,1] \to \mathbb R$ defined by 
$$F(\varphi,u) = R(\varphi).$$
This function is Lipschitz in the first variable since $R$ is analytic on $(0, \varphi^*)$ and $[\Phi(\alpha), \Phi(\beta)] \subset (0, \varphi^*)$. Hence, for any continuous function $G:[0,1]\to \RR$ we have that 
$$\lim _{N\to\infty} \left\vert {\dfrac{1}{\# \Lambda_N}} \sum_{x\in \Lambda_{N} }  G(\tfrac{x}{N}) R(\varphi_{N}(x)) - \int _{0} ^{1} G(u)R\left( \left[ \Phi(\alpha)+ \Phi (\beta) \right]\bar\rho(u)\right)du\right\vert = 0.$$
From \eqref{eq:007} we conclude the proof.
\end{proof}

\begin{center}
\begin{figure}[htb!]\label{fig:statprofZR}
    \centering
    \includegraphics[scale =0.2]{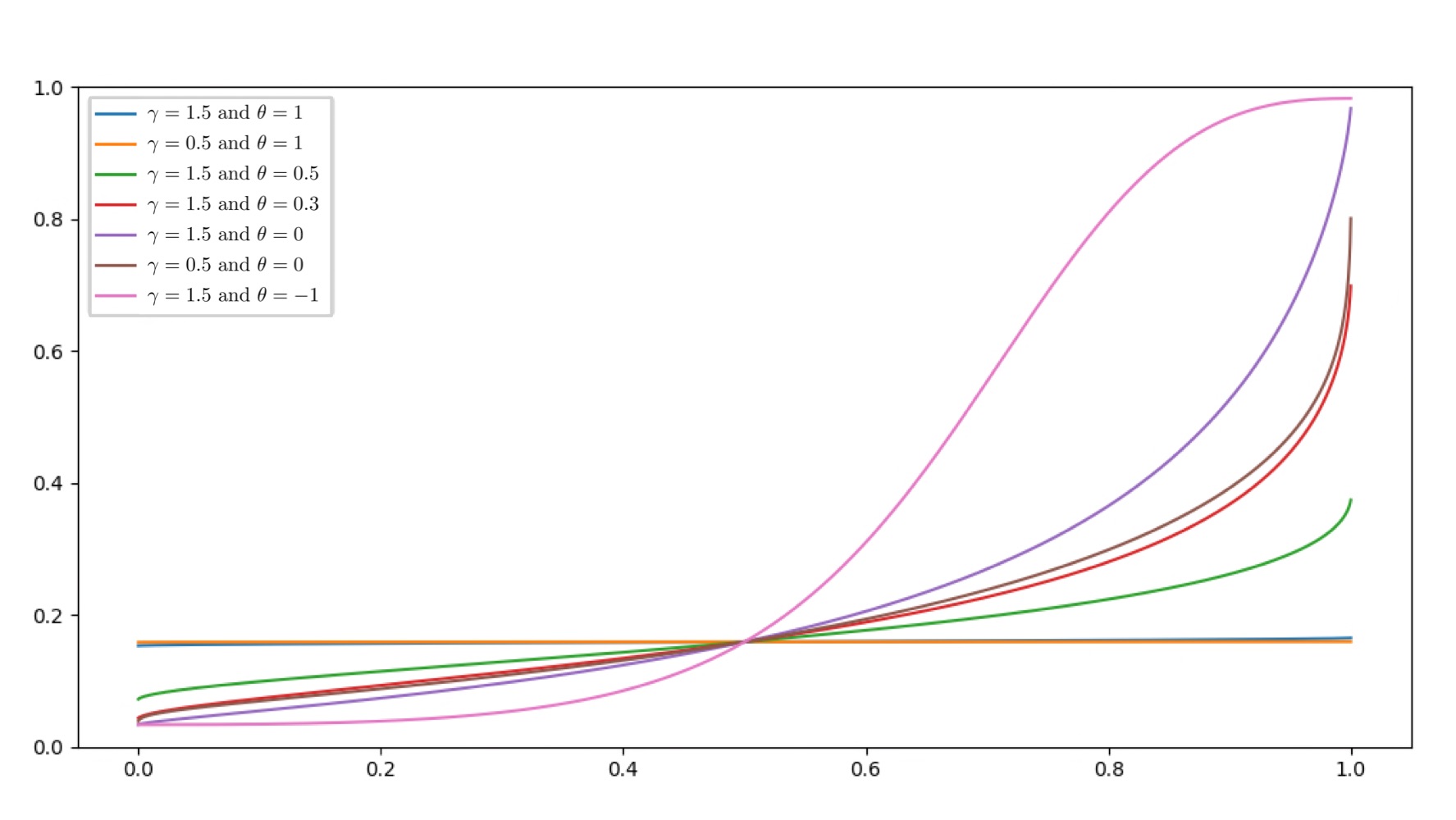}
    \caption{Stationary profiles of the boundary driven zero-range process with long-jumps, with $\alpha = 0.2$, $\beta=0.8$, $g(k)=\left(1+\frac{3}{k}\right)^3$ if $k\neq 0$ and $g(0)=0$. All the profiles $\bar m$ are such that ${\bar m} (1/2)=R \left( \tfrac{\Phi (\alpha)+\Phi(\beta)}{2}\right)$ (which follows from \eqref{eq:symmetry} evaluated for $x=N/2$).}  
\end{figure}
\end{center}

\begin{rem}
In Figure 2 it is plotted the profile $\bar m(\cdot)$. Observe that in some range of the parameters ({$\gamma< 2$}, $\theta\ge 0$) the profile is non-differentiable at the boundaries. An open question is to determine its exact behaviour there.
\end{rem}

\subsubsection{Fractional Fick's law}
Our second result is the following ``fractional Fick's law''. For  $x \in \Lambda_{N}\cup{\{N\}}$ and a configuration $\xi$,  we denote the current over the value $x-\tfrac{1}{2}$ by  $W_{x}^{zr}(\xi)$ and we define it as the rate of particles crossing  $x-\tfrac{1}{2}$ from  left to  right minus the rate of particles crossing $x-\tfrac{1}{2}$ from  right to  left. Therefore, the current  can be written as 
\begin{equation}\label{def:current}
\begin{split}
W_x^{zr}(\xi)=  &\sum_{\substack {1 \le y\le x-1 \\
 x-1<z\le N-1}}  p(z-y) (g(\xi(y))  -g(\xi(z)) ) \\
+&{\frac{\kappa}{ N^{\theta}}}\Bigg[\sum_{x \le z \le N-1} r_N^-(\tfrac zN) ( \Phi(\zalpha) -g(\xi(z))) -\sum_{1 \le y\le x-1}r_N^+(\tfrac yN)   (\Phi(\zbeta)-g(\xi(y)))\Bigg].
\end{split}
\end{equation}
Moreover, changing in the last definition $g(\xi(\cdot))$ by $\eta (\cdot)$, $\Phi(\zalpha)$ by  $\xalpha $  and $\Phi(\zbeta)$ by  $\xbeta $ we obtain the  definition of  the current for the exclusion process (see \eqref{eq:currentexclusion0}).  We will denote  the current for the exclusion process by $W_x^{ex}(\eta)$.
From Proposition \ref{pro_exzr} it is not difficult to see that 
$$E_{\zr }[{W_{x}^{zr}}] = \left( \Phi(\alpha) +\Phi (\beta) \right) \ E_{\ex }[{W_{x}^{ex}}].$$
Therefore, it is sufficient to study the behaviour of the average current for the boundary driven exclusion process with long jumps. From Theorem  \ref{thm:Fick} we can derive the next result.
\begin{thm}[Fractional Fick's law]
\label{thm:Fick}
Let $\bar m(\cdot)$ be the hydrostatic profile of the boundary driven ZRP defined in Corollary \ref{cor-hydrostat}.
For $u\in (0,1)$ the following fractional Fick's law holds, apart from the case $\theta=0$ and $\gamma=1$:
\begin{itemize}
    \item [a)] for $\theta < 0$, 
 \begin{equation}
 \label{eq:fl67}
 \begin{split}
 \lim_{N \to \infty} \frac{1}{N^{1-\theta-\gamma}} E_{ \zr }[{W_{[uN]}^{zr}}] &= \kappa\int_{u}^{1}(\Phi(\zalpha) -\Phi (\bar m(v)))r^{-}(v)dv- \kappa\int_{0}^{u}(\Phi(\zbeta) -\Phi (\bar m (v)) r^{+}(v)dv\\
&=\kappa c_\gamma \gamma^{-1} \int_0^1\dfrac{\Phi(\zalpha)-\Phi(\zbeta)}{v^\gamma+(1-v)^\gamma} dv;
 \end{split}
 \end{equation}
\item [b)]
for  $\theta = 0$,
\begin{equation}
 \label{eq:fl68}
 \begin{split}
 \lim_{N \to \infty}\frac{1}{N^{1-\gamma}} E_{\zr }[{W_{[uN]}^{zr}}] &=c_{\gamma} \int_{0}^u \; \int_{u}^{1} \, \cfrac{{ \Phi} (\bar m(v)) -{ \Phi} (\bar m(w))}{(w-v)^{1+\gamma}}\, dw dv + \kappa\int_{u}^{1}(\Phi(\zalpha) -\Phi(\bar m(v)))r^{-}(v)dv\\
&- \kappa\int_{0}^{u}(\Phi(\zbeta) -\Phi(\bar  m(v)))r^{+}(v)dv;
 \end{split}
 \end{equation}
\item [c)] for $\theta>0$,
\begin{equation}
 \label{eq:fl69}
 \begin{split}
 \lim_{N \to \infty} \frac{1}{N^{1-\gamma}} E_{\zr }[{W_{[uN]}^{zr}}] &=c_{\gamma} \int_{0}^u \; \int_{u}^{1} \, \cfrac{{ \Phi} (\bar m(v)) -{ \Phi} (\bar m(w))}{(w-v)^{1+\gamma}}\, dw dv. 
 \end{split}
 \end{equation}
 \end{itemize}
 \end{thm}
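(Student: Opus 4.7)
The plan is to reduce the statement to the corresponding fractional Fick's law for the boundary driven exclusion process with long jumps, which is established in Appendix~\ref{app:hydrostaticsexclusion}, via the static mapping of Proposition~\ref{pro_exzr}. Concretely, I would take expectations under $\zr$ in the definition \eqref{def:current} of $W_x^{zr}$. Using $E_{\zr}[g(\xi(y))] = \varphi_N(y) = (\Phi(\alpha)+\Phi(\beta))\,E_{\ex}[\eta(y)]$ together with the identities $\Phi(\alpha) = (\Phi(\alpha)+\Phi(\beta))\tilde\alpha$ and $\Phi(\beta) = (\Phi(\alpha)+\Phi(\beta))\tilde\beta$ that follow from \eqref{tildealphabeta}, one factors the common constant out of every bulk and boundary term to obtain
$$E_{\zr}[W_x^{zr}] = \bigl(\Phi(\alpha)+\Phi(\beta)\bigr)\, E_{\ex}[W_x^{ex}],$$
where the exclusion process on the right-hand side has boundary parameters $\tilde\alpha,\tilde\beta$. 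This identity, already noted right after the definition of the currents, reduces every item in the theorem to the analogous scaling limit for the exclusion current.

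Next I would invoke the fractional Fick's law for the boundary driven exclusion process in Appendix~\ref{app:hydrostaticsexclusion}: this provides, in each of the regimes $\theta<0$, $\theta=0$ and $\theta>0$, an explicit limit for $N^{-(1-\theta-\gamma)} E_{\ex}[W_{[uN]}^{ex}]$, respectively $N^{-(1-\gamma)} E_{\ex}[W_{[uN]}^{ex}]$, expressed in terms of the exclusion hydrostatic profile $\bar\rho$ from Theorem~\ref{thm:gen_conv}. To translate into the zero-range profile $\bar m$ given in Corollary~\ref{cor-hydrostat}, I would use the relation $\bar m(u)=R\bigl[(\Phi(\alpha)+\Phi(\beta))\bar\rho(u)\bigr]$ and the fact that $\Phi = R^{-1}$, which yields
$$(\Phi(\alpha)+\Phi(\beta))\bar\rho(v) = \Phi(\bar m(v)), \qquad (\Phi(\alpha)+\Phi(\beta))\tilde\alpha=\Phi(\alpha), \qquad (\Phi(\alpha)+\Phi(\beta))\tilde\beta=\Phi(\beta).$$
Multiplying the exclusion Fick's law by $(\Phi(\alpha)+\Phi(\beta))$ and substituting these relations directly produces the formulas \eqref{eq:fl67}, \eqref{eq:fl68} and \eqref{eq:fl69}, with the integrands naturally written in terms of $\Phi(\bar m(v))$, $\Phi(\alpha)$ and $\Phi(\beta)$.

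Finally, the second equality in case (a) is a direct computation that I would carry out by hand: in the regime $\theta<0$ the profile is $\bar\rho(v) = V_0(v)/V_1(v)$, so that $\Phi(\bar m(v)) = [\Phi(\alpha) r^-(v) + \Phi(\beta) r^+(v)]/(r^-(v)+r^+(v))$, giving
$$\Phi(\alpha)-\Phi(\bar m(v)) = \frac{r^+(v)\bigl(\Phi(\alpha)-\Phi(\beta)\bigr)}{r^-(v)+r^+(v)}, \qquad \Phi(\beta)-\Phi(\bar m(v)) = \frac{r^-(v)\bigl(\Phi(\beta)-\Phi(\alpha)\bigr)}{r^-(v)+r^+(v)}.$$
The two integrals over $[0,u]$ and $[u,1]$ then combine into a single integral over $[0,1]$ of $(\Phi(\alpha)-\Phi(\beta))\, r^+(v)r^-(v)/(r^+(v)+r^-(v))$, and plugging in the explicit form \eqref{def:rpm} of $r^\pm$ collapses the integrand to $c_\gamma \gamma^{-1}(\Phi(\alpha)-\Phi(\beta))/(v^\gamma+(1-v)^\gamma)$. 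The main obstacle here is purely of bookkeeping nature, namely keeping track of how the reservoir prefactors $\kappa/N^\theta$ interact with $\tilde\alpha,\tilde\beta$ when translating between zero-range and exclusion notation and matching the scaling exponents $1-\theta-\gamma$ versus $1-\gamma$ dictated by the behavior of $r_N^{\pm}$; the genuinely analytic content — the hydrostatic limit of the exclusion process and the limit of its current — is already contained in the appendix, so no new regularity estimate is required.
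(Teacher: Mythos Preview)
Your proposal is correct and matches the paper's own argument essentially line for line: the paper derives $E_{\zr}[W_x^{zr}] = (\Phi(\alpha)+\Phi(\beta))\,E_{\ex}[W_x^{ex}]$ from Proposition~\ref{pro_exzr} right before stating the theorem, then invokes the exclusion Fick's law from Appendix~\ref{app:hydrostaticsexclusion} and rewrites the result via \eqref{eq:mprofile}. Your treatment of the second equality in case~(a) is in fact more explicit than what the paper provides.
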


\begin{rem}
In the ``Neumann case'', i.e. $\gamma \in (1,2)$ and $\theta>\gamma-1$ or {$\gamma \in (0,1]$} and  $\theta>0$, $\bar m$ is constant and the current vanishes as expected.  
\end{rem}

\subsubsection{Static large deviations}
We want  to obtain the large deviation principle associated to the hydrostatic results. More precisely, we want to estimate the probability of a deviation from the typical profile which satisfies the hydrostatic equation,  but remains close to some prescribed path. In order to do this, we consider a perturbation  of the system. First we need to introduce some notation.

Let ${\mc M}$ be the space of finite signed Borel measures on $[0,1]$. It is known that ${\mc M}$ is the topological dual of $C^0([0,1])$, when the latter is equipped with the uniform convergence. Then ${\mathcal M}$ equipped with the weak-$\star$ topology {\footnote{We recall that a sequence $\{\mu_n\}_n \in {\mathcal M}$ converges $\star$-weakly to $\mu \in {\mathcal M}$ if, and only if, for all  $G\in C([0,1])$ we have that $\int_0^1 G(x) d\mu_n (x)$ converges to $\int_0^1 G(x) d\mu (x)$. This coincides with the notion of `weak convergence' used in probability theory.}} is a Banach space. Let ${\mc M}^+ \subset {\mathcal M}$ be the cone of positive measures.  For any $\xi \in \Omega_N$ the empirical measure  ${\pi}^N  (\xi, du) \in {\mc M}^+$ is defined by 
\begin{equation}
{\pi}^N  (\xi, du) ={\dfrac{1}{\# \Lambda_N}} \sum_{x\in \Lambda_N}\xi_x   \delta_{\tfrac{x}{N}}(du)
\end{equation}
where $\delta_u$ is the Dirac mass on  $u \in [0,1]$. We assume that $\xi$ is distributed according to $\nu_{ss}^N$ and to simplify we denote  ${\pi}^N (\xi, du)$ by ${ \pi}^N (du)$.  The action of $\pi^N \in {\mc M}^+$ on a continuous function $G:[0,1] \to \RR$ is denoted by 
$$\langle \pi^N , G \rangle : = \int_{[0,1]} G(u) \pi^N (du)= {\dfrac{1}{\# \Lambda_N}} \sum_{x\in\Lambda_N}  G(\tfrac xN)\xi_x.$$  
We also define the  functional $\Lambda: C^0 ([0,1])\to \mathbb{R}$ by
$$\forall G \in C^0 ([0,1]), \quad \Lambda(G) =\int_{0}^{1}\log\left( \frac{ Z(e^{G(u)}\Phi (\bar m(u)))}{Z(\Phi (\bar m(u)))}\right) du$$
where $\bar m$ is the hydrostatic profile defined in Corollary \ref{cor-hydrostat}.  Its Legendre transform $\Lambda^*:{\mathcal M} \to {\mathbb R}$ is given by 
\begin{equation*}
    \forall \pi \in {\mathcal M}, \quad \Lambda^* (\pi) = \sup_{G\in C^0 ([0,1])} \left\{ \langle \pi, G\rangle - \Lambda (G) \right\}.
\end{equation*}

 The functional $\Lambda^*$ can be computed more explicitly. If $\pi$ is not absolutely continuous with respect to the Lebesgue measure then it is easy to show that $\Lambda^* (\pi)=\infty$. If $\pi (du) := \pi (u) du $ is absolutely continuous with respect to the Lebesgue measure then,  from \eqref{eq:Rprime} and the fact that $\Phi(\cdot)$ is the inverse function of $R(\cdot)$, we get that
 \begin{equation*}
     \Lambda^* (\pi) = \int_0^1 \left\{ \pi(u) \log \left(\frac{\Phi(\pi(u))}{\Phi(\bar m(u))}\right) -\log \left(\frac{Z(\Phi(\pi(u)))}{Z(\Phi(\bar m(u)))} \right)  \right\}\ du.
 \end{equation*}

\begin{thm}[Large deviations]
\label{th:ldp}
If  $\varphi^*=+\infty$, then the sequence of random variables $\left\{\pi^{N}\right\}_{N\ge 1}$ satisfies a Large Deviation Principle at speed $N$ with the good rate function $\Lambda^{*}$.
\end{thm}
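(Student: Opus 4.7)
Since $\zr$ is a product measure by Proposition~\ref{Invariant measure zr} and $\vf^{*}=+\infty$, the partition function $Z$ is entire and every single-site moment generating function is finite. By the product structure, for any $G\in C^{0}([0,1])$,
\begin{equation*}
\Lambda_N(G):=\frac{1}{N}\log E_{\zr}\bigl[e^{N\langle \pi^N,G\rangle}\bigr]=\frac{1}{N}\sum_{x\in\Lambda_N}\log\frac{Z\bigl(e^{\frac{N}{N-1}G(x/N)}\vf_N(x)\bigr)}{Z(\vf_N(x))}.
\end{equation*}
Applying Theorem~\ref{thm:gen_conv} to the function $F(\vf,u):=\log Z(e^{G(u)}\vf)-\log Z(\vf)$, which is Lipschitz in $\vf$ on $[\Phi(\alpha),\Phi(\beta)]$ because $Z$ is $C^{\infty}$ and strictly positive there, and using that $(\Phi(\alpha)+\Phi(\beta))\bar\rho(u)=\Phi(\bar m(u))$ by the definition of $\bar m$ in Corollary~\ref{cor-hydrostat}, one obtains $\Lambda_N(G)\to \Lambda(G)$ for every $G\in C^{0}([0,1])$. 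The functional $\Lambda$ is then convex (as a limit of cumulant generating functions) and continuous on $C^{0}([0,1])$.

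For the upper bound I would follow the standard G\"artner--Ellis/Baldi strategy. Exponential tightness at speed $N$ follows from the uniform bound $E_{\tilde\nu_{\vf_N(x)}}[e^{\xi(x)}]\le Z(e\Phi(\beta))/Z(\Phi(\beta))<\infty$ together with the Banach--Alaoglu characterisation of weak-$\star$ compact subsets of $\cM=C^{0}([0,1])^{*}$ as total-variation bounded sets. Chebyshev's inequality gives, for every compact $\mathcal{K}\subset\cM$ and every $G$,
\begin{equation*}
\frac{1}{N}\log \zr(\pi^N\in \mathcal{K})\le -\inf_{\pi\in \mathcal{K}}\bigl(\langle\pi,G\rangle-\Lambda_N(G)\bigr),
\end{equation*}
and a finite-cover minimax argument exploiting the convexity and continuity of $\Lambda$ upgrades this, after taking the supremum over $G$, to the bound $-\inf_{\pi\in\mathcal{K}}\Lambda^*(\pi)$; exponential tightness then extends the bound to all closed sets.

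For the lower bound, given an open set $\cO\ni\pi^*$ with $\Lambda^*(\pi^*)<\infty$, the measure $\pi^*(du)=m(u)\,du$ is necessarily absolutely continuous. Introduce the tilted product measure $\nu^{m}_N:=\bigotimes_{x\in\Lambda_N}\tilde\nu_{\Phi(m(x/N))}$, under which $\xi(x)$ has mean $m(x/N)$, so $\pi^N\to\pi^*$ in $\nu^{m}_N$-probability by the $L^{2}$ law of large numbers (variances are controlled after approximating $m$ by a smooth, uniformly positive profile; here $\vf^*=+\infty$ is essential to keep $\Phi\circ m$ inside the domain of $Z$). A direct computation gives
\begin{equation*}
\frac{d\zr}{d\nu^{m}_N}(\xi)=\prod_{x\in\Lambda_N}\Bigl(\frac{\vf_N(x)}{\Phi(m(x/N))}\Bigr)^{\xi(x)}\frac{Z(\Phi(m(x/N)))}{Z(\vf_N(x))},
\end{equation*}
whose scaled logarithm converges in $\nu^{m}_N$-probability to $-\Lambda^*(\pi^*)$, obtained by combining the deterministic averaging of Theorem~\ref{thm:gen_conv} on the partition-function term with the law of large numbers on the linear-in-$\xi$ term. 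The standard change-of-measure inequality $\zr(\pi^N\in\cO)\ge e^{-N(\Lambda^*(\pi^*)+\ve)}\nu^{m}_N(\{\pi^N\in\cO\}\cap A^\ve_N)$, where $A^\ve_N$ is the event that the scaled log-Radon--Nikodym lies within $\ve$ of its limit, yields $\liminf_{N\to\infty}\tfrac{1}{N}\log \zr(\pi^N\in\cO)\ge -\Lambda^*(\pi^*)-\ve$, and $\ve\to 0$ concludes.

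The most delicate step is the lower bound, where one must approximate an arbitrary admissible density $m$ (which may vanish or blow up) by smooth, strictly positive profiles while preserving the value of $\Lambda^*$ by continuity, and then establish the $L^{2}$-law of large numbers for the tilted product measure with non-homogeneous marginals. The assumption $\vf^*=+\infty$ is precisely what ensures that $\Phi$ is defined on all of $[0,+\infty)$, that $\Lambda^*$ is finite on every absolutely continuous measure with $L^{1}$ density, and that all exponential moments of the tilted marginals remain well-behaved, which makes this approximation scheme possible.
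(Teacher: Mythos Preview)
Your computation of the limiting cumulant generating functional $\Lambda$ via Theorem~\ref{thm:gen_conv} and your exponential-tightness argument coincide with the paper's. The difference lies in how the full LDP is concluded. The paper invokes Corollary~4.5.27 of Dembo--Zeitouni (an abstract G\"artner--Ellis theorem on locally convex spaces) and simply checks its four hypotheses: convergence of $\tfrac{1}{N}\Lambda_N(G)$ to $\Lambda(G)$, Gateaux differentiability of $\Lambda$, lower semicontinuity of $\Lambda$, and exponential tightness. In particular, the paper verifies Gateaux differentiability by a one-line dominated-convergence computation; this single condition is exactly what guarantees the lower bound without any approximation of target profiles.

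Your route is a hands-on reconstruction of that machinery: a Chebyshev/minimax argument for the upper bound and an explicit product tilting $\nu_N^m$ for the lower bound. This is correct in principle, but the step you yourself flag as ``most delicate''---approximating an arbitrary admissible density $m$ by smooth strictly positive profiles while controlling $\Lambda^*$---is precisely the part that Gateaux differentiability handles for free in the abstract framework. Note also that $\Lambda^*$ is only lower semicontinuous, so ``preserving the value of $\Lambda^*$ by continuity'' needs a careful monotone/convex approximation argument rather than a direct continuity claim, and your side remark that $\Lambda^*$ is finite on every $L^1$ density is not correct in general. None of this is fatal, but the paper's four-line verification of Gateaux differentiability is a cleaner substitute for your entire lower-bound paragraph.
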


\section{Proof of Propositions \ref{Invariant measure zr} and \ref{pro_exzr}} \label{sec:3}

\begin{proof} [Proof of Proposition \ref{Invariant measure zr}] Observe that  \eqref{Traffic equation} is a finite-dimensional linear equation which can be written in a matrix form. Let us now introduce some notation. We define  the vector $\vf_{N}=[\vf_{N}(x)]_{x\in\Lambda_{N}}$, the square matrix of size $N-1$ denoted by  $P_{N} = [p(y-x)]_{(x,y)\in \Lambda_{N}^{2}}$, the diagonal matrix $D_N$ of size $N-1$ whose diagonal elements are given by $$ \left[\sum_{y\in \Lambda_{N}}p(y-x)+\kappa N^{-\theta}\left( r^{+}_{N}(\tfrac{x}{N})+r^{-}_{N}(\tfrac{x}{N})\right)\right]_{x\in\Lambda_{N}}, $$ and finally, the vector $R_{N} = \left[\Phi(\zbeta) r_{N}^{+}(\tfrac{x}{N})+\Phi(\zalpha) r_{N}^{-}(\tfrac{x}{N})\right]_{x\in \Lambda_{N}}$.
With this notation we can rewrite the traffic equation \eqref{Traffic equation} as 
$$(D_N -P_N) {\vf_N} = R_N.$$
Since $$\left\vert \sum_{y\in \Lambda_{N}}p(y-x)+\kappa N^{-\theta}\left( r^{+}_{N}(\tfrac{x}{N})+r^{-}_{N}(\tfrac{x}{N})\right)\right\vert > \sum_{y\in \Lambda_{N}}\vert p(y-x)\vert,$$
the matrix  $(D_{N}- P_{N})$  is strictly diagonally dominant, hence it is invertible. From this, we get that the traffic equation \eqref{Traffic equation} has a unique solution.\\

We have now to show that $\varphi_N$ takes  values in $[\Phi(\alpha), \Phi(\beta)] \subset [0, \varphi^*)$. Let $\varphi =\max_{x\in \Lambda_N} \varphi_N (x)$ and let $x_0\in \Lambda_N$ be such that $\varphi =\varphi_N (x_0)$. Evaluating \eqref{Traffic equation} at $x=x_0$ and using the fact that for any $y \in \Lambda_N$, $\varphi_N (y) \le {\varphi} =\varphi_N (x_0)$ we get that
\begin{equation}
 {\varphi} \le \frac{\Phi(\zbeta) r_{N}^{+}\left(\tfrac{x_0}{N}\right)+ \Phi(\zalpha) r_{N}^{-}\left(\tfrac{x_0}{N}\right)}{r_{N}^{+}\left(\tfrac{x_0}{N}\right)+  r_{N}^{-}\left(\tfrac{x_0}{N}\right)}\le \Phi(\beta) < \varphi^*   
\end{equation}
where the penultimate inequality follows from the assumption $\alpha\le \beta$ and the fact that $\Phi$ is increasing, while the last inequality is due to the assumption \eqref{eq:alphabeta}. A similar argument shows that $$\min_{x\in \Lambda_N}\varphi_N (x) \ge \Phi(\alpha) > 0.$$ It follows that $\tilde\nu_{\varphi_N (\cdot)}$ is a well defined probability measure, since $Z (\varphi_N (x))$ is finite for every $x \in \Lambda_N$.\\

In order to prove that $\tilde\nu_{\varphi_N (\cdot)}$ is the invariant measure of the boundary driven zero-range process with long jumps, we have to prove that for any bounded function $f:\Omega_N \to \mathbb R$, it holds
$$\int_{\Omega_{N}}(L_N f)(\xi)\, d\tilde\nu_{\varphi_N (\cdot)}(\xi)=0.$$
Observe that from the  definition of the generator we have that  
\begin{equation*}\begin{split}
 \int_{\Omega_{N}} (\Lzr f)(\xi)d\nu_{N}(\xi)
=\sum_{\xi\in\Omega_{N}}&{\tilde \nu }_{\varphi_N (\cdot)}(\xi)\left\lbrace \sum_{x\in\Lambda_{N}}\sum_{y\in\Lambda_{N}} p(y-x)g(\xi(x)) \left[ f(\xi^{x,y}) - f(\xi)\right] \right.\\\nonumber
& +   \frac{\kappa}{ N^{\theta}} \left.\sum_{x\in\Lambda_{N}}\sum_{y\geq N}p(y-x)\Phi(\zbeta) \left[ f(\xi^{y,x}) -f(\xi)\right]   +g(\xi(x)) \left[ f(\xi^{x,y}) -f(\xi)\right] \right.\\\nonumber
& +  \frac{\kappa}{ N^{\theta}} \left.\sum_{x\in\Lambda_{N}}\sum_{y\leq 0} p(y-x)\Phi(\zalpha) \left[ f(\xi^{y,x}) -f(\xi)\right]+g(\xi(x)) \left[ f(\xi^{x,y}) -f(\xi)\right]\right\rbrace.
\end{split}\end{equation*}
Performing the  change of variables $\xi \to \xi^{x,y}$, we get
\begin{equation*}\begin{split}
\int_{\Omega_{N}} &(\Lzr f)(\xi)d{\tilde \nu}_{\varphi_N (\cdot) }(\xi)\\&=\sum_{\xi\in\Omega_{N}}f(\xi)\left\lbrace \sum_{x\in\Lambda_{N}}\sum_{y\in\Lambda_{N}} p(y-x)\left[g(\xi(x)+1)  {\tilde \nu}_{\varphi_N (\cdot)}(\xi^{y,x}) -g(\xi(x)) {\tilde \nu}_{\varphi_N (\cdot)}(\xi)\right] \right.\\\nonumber
 &\qquad\qquad\quad +\frac{\kappa}{ N^{\theta}} \left.\sum_{x\in\Lambda_{N}}\sum_{y\geq N}p(y-x) \left[g(\xi(x)+1) {\tilde \nu}_{\varphi_N (\cdot)}(\xi^{y,x})-g(\xi(x)){\tilde \nu}_{\varphi_N (\cdot)}(\xi)\right]  \right.\\\nonumber
 &\qquad\qquad\quad+ \frac{\kappa}{ N^{\theta}} \left. \sum_{x\in\Lambda_{N}}\sum_{y\leq 0} p(y-x) \left[g(\xi(x)+1) {\tilde \nu}_{\varphi_N (\cdot)}(\xi^{y,x})-g(\xi(x)){\tilde \nu}_{\varphi_N (\cdot)}(\xi)\right]\right.\\\nonumber
 &\qquad\qquad\quad+\frac{\kappa}{ N^{\theta}} \sum_{x\in\Lambda_{N}} \sum_{y\geq N} p(y-x) \Phi(\zbeta)\left[ {\tilde \nu}_{\varphi_N (\cdot)}(\xi^{x,y})-{\tilde \nu}_{\varphi_N (\cdot)}(\xi)\right] \\\nonumber
 &\qquad\qquad\quad +\frac{\kappa}{ N^{\theta}} \left. \sum_{x\in\Lambda_{N}} \sum_{y\leq 0} p(y-x) \Phi(\zalpha)\left[ {\tilde \nu}_{\varphi_N (\cdot)}(\xi^{x,y})-{\tilde \nu}_{\varphi_N (\cdot)}(\xi)\right] \right\rbrace.
\end{split}\end{equation*}
Since ${\tilde \nu}_{\varphi_N (\cdot)}$  is a product measure we have that 
\begin{equation*}
{\tilde \nu}_{\varphi_N (\cdot)}(\xi^{y,x}) = 
\begin{cases}
\dfrac{{\tilde \nu}_{\varphi_N (\cdot)}(\xi)g(\xi(y))\vf_{N}(x)}{g(\xi(x)+1)\vf_N(y)}, \:\:\text{if}\:\: x,y \in \Lambda_{N} , \\
\\
\dfrac{{\tilde \nu}_{\varphi_N (\cdot)}(\xi)\vf_{N}(x)}{g(\xi(x)+1)}, \:\:\text{if}\:\:  x\in \Lambda_{N},\:\: y\notin \Lambda_{N}. \\
\\
\dfrac{{\tilde \nu}_{\varphi_N (\cdot)}(\xi)g(\xi(y))}{\vf_{N}(y)}, \:\:\text{if}\:\:  y\in \Lambda_{N},\:\: x\notin \Lambda_{N}. \\
\end{cases}
\end{equation*}
Therefore,  
\begin{equation*}\label{im3}\begin{split}
 \int_{\Omega_{N}} (\Lzr f)(\xi)&d{\tilde \nu}_{\varphi_N (\cdot)}(\xi)=\sum_{\xi \in \Omega_{N}}f(\xi){\tilde \nu}_{\varphi_N (\cdot)}(\xi)\left\lbrace\sum_{x\in\Lambda}\dfrac{g(\xi(x))}{\vf_{N}(x)}\times \right.\\
 &\quad \quad \times \left[ \sum_{y\in\Lambda_{N}}p(y-x)\vf_{N}(y) + \frac{\kappa}{ N^{\theta}}\left[\Phi(\zbeta) r^{+}_{N}(\tfrac{x}{N})+  \Phi(\zalpha) r^{-}_{N}(\tfrac{x}{N})\right] \right.\\
&  \qquad\qquad\left.\left. - \vf_{N}(x)\left(\sum_{y\in \Lambda_{N}}p(y-x)+\kappa N^{-\theta}\left( r^{+}_{N}(\tfrac{x}{N})+r^{-}_{N}(\tfrac{x}{N})\right)\right)\right]\right\rbrace\\
&+\frac{\kappa}{ N^{\theta}}\sum_{\xi \in \Omega_{N}}f(\xi){\tilde \nu}_{\varphi_N (\cdot)}(\xi)\left\lbrace \sum_{x\in\Lambda}\left(\vf_{N}(x)-\Phi(\zalpha)\right)r^{-}_{N}(\tfrac{x}{N})+\left(\vf_{N}(x)-\Phi(\zbeta)\right)r^{+}_{N}(\tfrac{x}{N}) \right\rbrace.
\end{split}
\end{equation*}
By using the fact that $\vf_{N}(x)$ is solution of the traffic equation (\ref{Traffic equation}) the first three lines  in  last display are  equal to $0$. It remains to see that the last line in last display is also equal to $0$. For that purpose we observe that we can rewrite it as  
\begin{eqnarray*}
&&\frac{\kappa}{ N^{\theta}}\sum_{\xi \in \Omega_{N}}f(\xi){\tilde \nu}_{\varphi_N (\cdot)}(\xi)\left\lbrace \sum_{x\in\Lambda}\left[\vf_{N}(x)+\vf_{N}(N-x)-(\Phi(\zalpha)+\Phi(\zbeta))\right]r^{-}_{N}\left(\tfrac{x}{N}\right)\right\rbrace. 
\end{eqnarray*}
Hence it is sufficient to show that 
\begin{equation}
\label{eq:symmetry}
\vf_{N}(x)+\vf_{N}(N-x) = \Phi(\zalpha) + \Phi(\zbeta).
\end{equation}
To see this it is enough to  sum  \eqref{Traffic equation} evaluated at $x$ and the same equation evaluated at $N-x$, and use the fact that 
$$\sum _{y\in \Lambda_{N}} p(y-(N-x))= \sum _{y\in \Lambda_{N}} p(y-x), \quad  r^{+}(\tfrac{N-x}{N}) = r^{-}_{N}(\tfrac{x}{N}), \quad  r^{-}(\tfrac{N-x}{N}) = r^{+}_{N}(\tfrac{x}{N})$$
and
 $$\sum _{y\in \Lambda_{N}} p(y-(N-x))\vf_N(y)=\sum _{y\in \Lambda_{N}} p(y-x)\vf_N(N-y).$$ From this we get that 
\begin{equation*}
\begin{split}
&\left(\vf_{N}(x)+\vf_{N}(N-x)\right)\left(\sum_{y\in \Lambda_{N}}p(y-x)+\kappa N^{-\theta}\left( r^{+}_{N}(\tfrac{x}{N})+r^{-}_{N}(\tfrac{x}{N})\right)\right)\\
=& \sum_{y\in\Lambda_{N}}p(y-x)\left(\vf_{N}(y)+\vf_{N}(N-y)\right)+ \kappa N^{-\theta}\left(\Phi(\zbeta) + \Phi(\zalpha)\right)\left( r_{N}^{+}\left(\tfrac{x}{N}\right)+ r_{N}^{-}\left(\tfrac{x}{N}\right)\right).
\end{split}
\end{equation*} 
We have seen above that the matrix $D_N -P_N$ is invertible,  so that this discrete equation with unknown $\psi (\cdot)=\varphi_N (\cdot) +\varphi_N (N-\cdot))$ has a unique solution. Since the constant function $\psi (\cdot)=\Phi(\zbeta)+ \Phi(\zalpha)$ is a solution, we can conclude that $\vf_{N}(x)+\vf_{N}(N-x) = \Phi(\zalpha) + \Phi(\zbeta)$
and this ends the proof.
\end{proof}

\begin{proof} [Proof of Proposition \ref{pro_exzr}] 

Note that for each $x\in \Lambda_N$ we have
\begin{equation*}\label{subst.}
\begin{split}
E_{\zr}[g(\xi(x))] &= \int _{\Omega_{N}} g(\xi(x))d \zr (\xi)
                    = \sum _{k=0}^{\infty}\dfrac{g(k)(\vf_N(x))^k}{Z(\vf_N(x))g(k)!}
    =\dfrac{\vf_N(x)}{Z(\vf_N(x))} \sum _{k=0}^{\infty}\dfrac{(\vf_N(x))^k}{g(k)!}
                          = \vf_N(x).
\end{split}
\end{equation*}

On the other hand, since $\ex$ is a stationary measure, by writing for each $x$ that $E_{\ex}\left[{\mathcal L}_N f_x \right]=0$ (recall \eqref{generator}) with $f_x(\eta) = \eta(x)$, we get directly that $\left( \Phi (\alpha)+ \Phi (\beta)\right)\, E_{\ex} [\eta (x)]$ is the solution of the traffic equation \eqref{Traffic equation}. So, by uniqueness of the solution of \eqref{Traffic equation} we conclude that
$E_{\ex} [\eta (x)] = \vf_N(x).$

\end{proof}

\section{Proof of Theorem \ref{thm:gen_conv}}\label{HyFick}
\label{sec:4}

\begin{proof}
For $x\in \Lambda _{N}$ and $\ve >0$ we define{\footnote{We assume for simplicity that $\ve N$ is an integer.}} the box of length $2\ve N + 1$ centered around $x$
$$I_{\ve N}(x) = [x-\ve N, x+\ve N] \cap \Lambda_{N}$$ and for  a function $h:\Lambda_{N}\to \RR$  we define its average in this box by 
\begin{equation}\label{eq:average}
A[h, I_{\ve N}(x)] =\dfrac{1}{\# I_{\ve N}(x)}\sum _{y\in I_{\ve N}(x)}h(y) 
\end{equation}
where $\#$ is  the counting measure. Fix $\ve > 0$.  By adding and subtracting  the term 
$${\dfrac{1}{\# \Lambda_N}} \sum_{x\in \Lambda_{N} }  G(\tfrac{x}{N}) F\left(A[\vf_N, I_{\ve N}(x)],\tfrac{x}{N} \right)$$
and using the  triangular inequality, we get that
\begin{equation*}
    \begin{split}
        \Big|{\dfrac{1}{\# \Lambda_N}} \sum_{x\in \Lambda_{N} }  &G(\tfrac{x}{N}) F(\varphi_{N}(x),\tfrac{x}{N}) - \int _{0} ^{1} G(u) F\left( \left[ \Phi(\alpha)+ \Phi (\beta) \right]\bar\rho(u),u \right) du\Big|  \\
        \leq &  \Big|{\dfrac{1}{\# \Lambda_N}} \sum_{x\in \Lambda_{N} }  G(\tfrac{x}{N})\left( F(\varphi_{N}(x),\tfrac{x}{N})- F\left(A[\vf_N, I_{\ve N}(x)],\tfrac{x}{N} \right)\right) \Big|\\
        + & \Big|{\dfrac{1}{\# \Lambda_N}} \sum_{x\in \Lambda_{N} }  G(\tfrac{x}{N}) F\left(A[\vf_N, I_{\ve N}(x)],\tfrac{x}{N} \right) - F\left( \left[\Phi(\alpha)+\Phi(\beta) \right] \bar\rho\left(\tfrac{x}{N}\right),\tfrac{x}{N} \right)\Big|\\
        + & \Big|{\dfrac{1}{\# \Lambda_N}} \sum_{x\in \Lambda_{N} }  G(\tfrac{x}{N}) F\left( \left[\Phi(\alpha)+\Phi(\beta) \right] \bar\rho\left(\tfrac{x}{N}\right),\tfrac{x}{N} \right) - \int_0^1 G(u)F\left( \left[ \Phi(\alpha)+ \Phi(\beta)\right]  \bar\rho(u),u \right)du\Big|.
    \end{split}
\end{equation*}
Note that the last sum is a Riemann sum,  { and since $\bar\rho$ is a continuous function, see Lemma \ref{lem:continuity},} the last line in last display  vanishes as $N$ goes to $\infty$. So, it is enough to prove that  the limit as $N\to+\infty$ and then $\ve\to 0$ of the remaining terms vanishes. 
By the triangular inequality, \eqref{eq:centralequation}  and by using the fact $F$ is Lipschitz in the first component we have that 
{\begin{equation*}
    \begin{split}
      &\left\vert {\dfrac{1}{\# \Lambda_N}} \sum_{x\in \Lambda_{N} }  G(\tfrac{x}{N})\left( F(\varphi_{N}(x),\tfrac{x}{N})- F\left(A[\vf_N, I_{\ve N}(x)],\tfrac{x}{N} \right)\right) \right\vert \\
      &\quad \quad + \left\vert {\dfrac{1}{\# \Lambda_N}} \sum_{x\in \Lambda_{N} }  G(\tfrac{x}{N}) \left\{ F\left(A[\vf_N, I_{\ve N}(x)],\tfrac{x}{N} \right) - F\left( \left[\Phi (\alpha)+\Phi(\beta) \right]\bar\rho\left(\tfrac{x}{N}\right),\tfrac{x}{N} \right)\right\}\right\vert\\
        \lesssim &{\dfrac{1}{\# \Lambda_N}} \sum_{x\in \Lambda_{N} } \left\vert G(\tfrac{x}{N}) \right\vert \left\vert E_{\ex} \big[\eta(x) - A[\eta, I_{\ve N}(x)]  \big]\right\vert\\
        &\quad \quad
        + \left\vert{\dfrac{1}{\# \Lambda_N}} \sum_{x\in \Lambda_{N} }  G(\tfrac{x}{N}) \left\{ F\left(A[\vf_N, I_{\ve N}(x)],\tfrac{x}{N} \right) - F\left(\left[ \Phi (\alpha) +\Phi (\beta)\right] \bar\rho\left(\tfrac{x}{N}\right),\tfrac{x}{N} \right) \right\}\right\vert\\
        &= {\rm{(I)}} + {\rm{(II)}},
    \end{split}
\end{equation*}}
where above we have used the notation  $f(t) \lesssim g(t)$ to express the fact there exists a constant  $C$  independent of  $t$  such that $f(t) \leq C g(t)$, for every $t$.

We show separately that ${\rm{(I)}}$ and $\rm{(II)}$ go to zero in order to conclude the proof. For $\rm{(I)}$, by the stationary property of $\mu_{ss}^N$ and Fubini's Theorem, we obtain that
{\begin{equation}
    \label{eq:blablabla}
\left\vert E_{\ex} \left[ \eta(x) - A[\eta, I_{\ve N}(x)]  \right]\right\vert \leq   {\mathbb E}_{\ex}\left[\Bigg\vert  \int_0^1  \eta_t(x) - A[\eta_t, I_{\ve N}(x)]  dt\Bigg\vert   \right].
\end{equation}
By Lemmas 5.3, 5.4 and 5.5 in \cite{BPS} (taking  $\mu_N = \ex$) we obtain that 
$$\lim _{\ve \to 0}\lim _{N\to \infty}   {\mathbb E}_{\ex}\left[\Bigg\vert \int_0^1  \eta_t(x) - A[\eta_t, I_{\ve N}(x)]  dt \Bigg\vert  \right]= 0,$$}
which means that, we can replace the occupation number at site $x$ by its average in a box of length $\#I_{\ve N}(x)$. Hence $\rm{(I)}$ goes to zero. For $\rm{(II)}$, we introduce the notation $\Vert G\Vert_{1,N} ={\dfrac{1}{\# \Lambda_N}} \sum_{x\in \Lambda_N} \vert G (\tfrac x N)\vert$, and use the triangular inequality and the fact $F$ is Lipschitz in the first component, to get the bound 
\begin{equation*}
\begin{split}
   {\rm(II)} &\lesssim\left\vert {\dfrac{1}{\# \Lambda_N}}\sum_{x\in \Lambda_{N} }  G(\tfrac{x}{N}) \left\{ F\left(A[\vf_N, I_{\ve N}(x)],\tfrac{x}{N} \right) - F\left(\tfrac{\Phi(\alpha) +\Phi (\beta)}{\Vert G\Vert_{1,N}} \int_0^1 \vert G(u)\vert {\bar \rho} (u) du ,\tfrac{x}{N} \right)\right\}\right\vert\\
    +&\left\vert {\dfrac{1}{\# \Lambda_N}}\sum_{x\in \Lambda_{N} }  G(\tfrac{x}{N}) \left\{  F\left(\tfrac{\Phi(\alpha)+\Phi(\beta)}{\Vert G\Vert_{1,N}} \int_0^1 \vert G(u)\vert {\bar \rho} (u) du ,\tfrac{x}{N} \right) - F\left( \left[\Phi(\alpha)+\Phi(\beta) \right]\bar\rho\left(\tfrac{x}{N}\right),\tfrac{x}{N} \right)\right\}\right\vert\\
    \lesssim & {\dfrac{1}{\# \Lambda_N}} \sum_{x\in \Lambda_N} \vert G(\tfrac{x}{N}) \vert \left\vert A[\vf_N, I_{\ve N}(x)] - \tfrac{\Phi(\alpha)+\Phi(\beta)}{\Vert G\Vert_{1,N}} \int_0^1 \vert G(u)\vert {\bar \rho} (u) du \right\vert \\ 
    & +\cfrac{\Phi(\alpha)+\Phi(\beta)}{{\# \Lambda_N}} \sum_{x\in \Lambda_N} \vert G(\tfrac{x}{N}) \vert \left\vert \tfrac{1}{\Vert G\Vert_{1,N}} \int_0^1 \vert G(u)\vert {\bar \rho} (u) du -{\bar \rho} \left( \tfrac{x}{N} \right)\right\vert\\
    \lesssim  &\left\vert {\dfrac{1}{\# \Lambda_N}} \sum_{x\in \Lambda_N} \vert G(\tfrac{x}{N}) \vert  A[\vf_N, I_{\ve N}(x)] - \left[\Phi(\alpha)+\Phi(\beta) \right]\int_0^1 \vert G(u)\vert {\bar \rho} (u) du \right\vert\\
    & +\left[ \Phi(\alpha)+\Phi(\beta) \right] \left\vert \int_0^1 \vert G(u)\vert {\bar \rho} (u) du -{\dfrac{1}{\# \Lambda_N}} \sum_{x\in \Lambda_N} \vert G(\tfrac{x}{N}) \vert  {\bar \rho} \left( \tfrac{x}{N} \right)\right\vert.
\end{split}    
\end{equation*}
Recalling that $A[\vf_N, I_{\ve N}(x)] = \dfrac{1}{\# I_{\ve N}(x)}\sum _{y\in I_{\ve N}(x)}\varphi_N (y)$ and $\varphi_N (y) = [\Phi(\alpha)+\Phi(\beta)] E_{\mu_{ss}^N} \left[ \eta(y)\right]$, from Theorem \ref{th:HLM} and Lemma \ref{lem:continuity}, we conclude  that ${\rm(II)}$ goes to zero.

\medskip

It remains to prove that ${\bar \rho} (\cdot)$ takes values in $[\tilde \alpha, \tilde \beta]$. By Proposition \ref{Invariant measure zr} and Proposition \ref{pro_exzr} we have that for any $x \in \Lambda_N$, 
\begin{equation*}
    E_{\ex} \left[ \eta(x) \right] \in [\tilde \alpha, \tilde \beta].
\end{equation*}
By using \eqref{eq:blablabla} and Theorem \ref{th:HLM} we conclude{\footnote{Theorem \ref{th:HLM} is established for continuous functions and we need to apply it to the non continuous function $\iota^{\varepsilon}_u (\cdot) = (2\varepsilon)^{-1} {\mathbb 1}_{\vert \cdot - u \vert \le \varepsilon}$, $u\in [0,1]$. This can be done by a standard approximation argument.}} that for any continuous positive function $G:[0,1] \to \mathbb R$ we have
\begin{equation*}
    \tilde \alpha \int_0^1 G(u) du \le \int_0^1 G(u) {\bar\rho} (u) du \le    \tilde \beta \int_0^1 G(u) du.
\end{equation*}
Then, for any $v\in [0,1]$, we choose a sequence $(G_k)_{k}$ of positive continuous functions defined on $[0,1]$ and converging in the distributional sense to the Dirac mass on $v$. Applying the last inequality to $G_k$ and letting $k$ going to infinity, we conclude, since $\bar \rho$ is continuous that $\tilde \alpha \le {\bar \rho} (v) \le \tilde \beta$.

 \end{proof}

\section{Proof of Theorem \ref{th:ldp}}\label{sec:5}
To prove this theorem we apply Corollary 4.5.27 of \cite{dembo} and therefore we need to check the following facts:
\begin{enumerate}[1.]
    \item For all $G\in C^0 [0,1]$, denoting 
    \begin{equation}
    \label{eq:lambdaNG}
        \Lambda_{N}(G)=\log E_{\nu^{N}_{ss}}\left[ e^{N\langle\pi,G\rangle} \right],
    \end{equation}
    we have that $\lim_{N\to \infty} \dfrac{\Lambda_N (G)}{N}$ exists, it is equal to $\Lambda (G)$ and it is finite.
    \item   The functional $\Lambda$ is  Gateaux differentiable.
\item $\Lambda: C^0 ([0,1]) \to {\mathbb R}$ is lower semi-continuous.
\item The sequence $ \left\{ \pi_N \right\}_{N\ge 1}$ is exponentially tight. \end{enumerate}

\bigskip

Let us prove these four items. The first one is the content of Proposition \ref{prop:MGF}. For the second one, we recall that the partition function $Z$ is analytic on $(0,\infty)$. Then, for all $G,H\in C^0 ([0,1])$ we have that
    $$\lim_{t\to 0} \frac{\Lambda(G + tH) -\Lambda(G)}{t} = \lim_{t\to 0} \int_{0}^{1} \frac{\log\left(\frac{Z(e^{(G + tH)(u)}{\Phi( \bar m}(u)))}{Z(e^{G(u)}\Phi(\bar m (u)))}\right)}{t} du,$$
    and by dominated convergence theorem we get that

    \begin{equation}
        \begin{split}
            \lim_{t\to 0} \frac{\Lambda(G + tH) -\Lambda(G)}{t} &= \int_{0}^{1}\lim_{t\to 0}  \frac{\log\left(\frac{Z(e^{(G + tH)(u)}\Phi(\bar m (u)))}{Z(e^{G(u)}\Phi (\bar m (u)))}\right)}{t} du
             = \int_0^1 R\left(e^{G(u)}\bar \rho(u)\right)H(u) du,
        \end{split}
    \end{equation}
so that the second item is proved. The third item is trivial since $\Lambda$ is a continuous function. Indeed, by using the fact that $\bar \rho(\cdot)$ is a bounded function and $\log Z$ is continuously differentiable, we have that for any $G,H \in C^0 ([0,1])$, 
\begin{equation*}
\begin{split} 
    \left\vert \Lambda (H) -\Lambda (G) \right\vert & \le \int_0^1 \left\vert \log \left( Z \left(e^{H(u)} \bar \rho (u) \right)\right) -\log \left( Z \left(e^{G(u)} \bar  \rho (u) \right)\right)  \right\vert du   \le C \Vert H-G\Vert_{\infty}
\end{split}
\end{equation*}
where $C = \sup_{\varphi \in [0,c]} \left\vert \dfrac{d}{d\varphi} \ \log \left( Z (\varphi) \right) \right\vert$ with $c=\Vert \bar \rho \Vert_{\infty} e^{\sup\left\{\Vert G\Vert_{\infty},\Vert H \Vert_{\infty} \right\}}$. Therefore if $H\to G$ in $C^0 ([0,1])$ then $\Lambda(H) \to \Lambda (G)$. Hence $\Lambda$ is continuous and therefore lower semi-continuous. It remains to prove the forth item. We recall that ${\mc K}_A=\left\{ \mu \in {\mc M} \; ; \; \vert \mu ([0,1]) \vert \le A \right\}$ is a compact subset of $\mc M$ for the weak-$\star$ topology. Hence to prove the exponential tightness of the sequence $\left\{ \pi^N\right\}_{N\ge 1}$ it is sufficient to prove that
\begin{equation}
    \limsup_{A \to \infty} \limsup_{N\to \infty} {\dfrac{1}{\# \Lambda_N}}\log \ { P}_{\nu_{ss}^N} \left[ \dfrac{1}{N}\sum_{x \in \Lambda_N} \xi (x) \ge A\right] =-\infty.
\end{equation}
By Markov's inequality we have that
\begin{equation*}
 { P}_{\nu_{ss}^N} \left[ {\dfrac{1}{\# \Lambda_N}}\sum_{x \in \Lambda_N} \xi (x) \ge A\right] \le e^{-N A + \tfrac{\Lambda_N ({\mathbf 1})}{N}}   
\end{equation*}
where $\Lambda_N$ is defined in \eqref{eq:lambdaNG} and ${\mathbf 1}$ is the constant function on $[0,1]$ equal to $1$. Since $\tfrac{\Lambda_N (1)}{N} \to \Lambda ({\mathbf 1})$ (see Proposition \ref{prop:MGF}) we get the result.

\bigskip

It remains to prove Proposition \ref{prop:MGF}. Recall that $\Lambda_N$ is defined by \eqref{eq:lambdaNG}.

\begin{prop}
\label{prop:MGF}
For any continuous function $G:[0,1]\to \RR$ we have that
$$\lim_{N\to \infty}\left\vert \frac{\Lambda_{N}(G)}{N} -\Lambda(G)\right\vert = 0.$$
\end{prop}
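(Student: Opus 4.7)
The plan is to exploit the product structure of $\nu_{ss}^N = \tilde\nu_{\varphi_N(\cdot)}$ to compute $\Lambda_N(G)$ explicitly, and then to invoke Theorem \ref{thm:gen_conv} with an appropriate Lipschitz function $F$ to pass to the limit.

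First I would write
\begin{equation*}
N\langle \pi^N, G\rangle = t_N \sum_{x\in\Lambda_N} G(\tfrac{x}{N})\,\xi(x), \qquad t_N := \frac{N}{\#\Lambda_N},
\end{equation*}
which satisfies $t_N\to 1$. Since $\nu_{ss}^N$ is a product measure with single-site Laplace transform
\begin{equation*}
E_{\tilde\nu_{\varphi}}\bigl[e^{s\,\xi(x)}\bigr] = \frac{Z(e^{s}\varphi)}{Z(\varphi)},
\end{equation*}
(valid whenever $e^s\varphi <\varphi^*$, which under the hypothesis $\varphi^*=+\infty$ holds for every $s\in\bR$), we get the exact identity
\begin{equation*}
\Lambda_N(G) \;=\; \sum_{x\in\Lambda_N} \log\!\left(\frac{Z\bigl(e^{t_N G(x/N)}\varphi_N(x)\bigr)}{Z(\varphi_N(x))}\right).
\end{equation*}
Dividing by $N$ and using $N/\#\Lambda_N\to 1$, it suffices to prove
\begin{equation*}
\frac{1}{\#\Lambda_N}\sum_{x\in\Lambda_N} F_N\!\bigl(\varphi_N(x),\tfrac{x}{N}\bigr) \;\xrightarrow[N\to\infty]{}\; \int_0^1 F\bigl(\Phi(\bar m(u)),u\bigr)\,du,
\end{equation*}
where $F(\varphi,u) := \log\bigl(Z(e^{G(u)}\varphi)/Z(\varphi)\bigr)$ and $F_N$ is the same expression with $G(u)$ replaced by $t_NG(u)$; recall that $\Phi(\bar m(u))=(\Phi(\alpha)+\Phi(\beta))\bar\rho(u)$ by Corollary \ref{cor-hydrostat} and the fact that $\Phi=R^{-1}$.

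Next I would first replace $F_N$ by $F$ in the sum: since $Z$ is entire ($\varphi^*=+\infty$) and $\log Z$ is smooth on $(0,\infty)$, the map $\varphi\mapsto\log Z(e^{s}\varphi)$ is Lipschitz in $s$ uniformly for $\varphi$ in the compact set $[\Phi(\alpha),\Phi(\beta)]$ and $s$ in a bounded interval; together with $\|G\|_\infty<\infty$ and $|t_N-1|\to 0$, this gives
\begin{equation*}
\sup_{x\in\Lambda_N}\,\bigl|F_N(\varphi_N(x),\tfrac{x}{N})-F(\varphi_N(x),\tfrac{x}{N})\bigr| \;\xrightarrow[N\to\infty]{}\;0,
\end{equation*}
so this replacement is harmless. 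Then the function $F(\varphi,u)=\log Z(e^{G(u)}\varphi)-\log Z(\varphi)$ is continuous in $u$ and Lipschitz in $\varphi$ on $[\Phi(\alpha),\Phi(\beta)]$, because $\partial_\varphi F(\varphi,u)=e^{G(u)}(\log Z)'(e^{G(u)}\varphi)-(\log Z)'(\varphi)$ is bounded on the compact set $\varphi\in[\Phi(\alpha),\Phi(\beta)]$, $u\in[0,1]$ (using continuity of $G$ and of $(\log Z)'$ on $(0,\infty)$). Applying Theorem \ref{thm:gen_conv} with this $F$ and with the constant test function $G\equiv 1$ yields
\begin{equation*}
\frac{1}{\#\Lambda_N}\sum_{x\in\Lambda_N} F\bigl(\varphi_N(x),\tfrac{x}{N}\bigr) \;\xrightarrow[N\to\infty]{}\; \int_0^1 \log\!\left(\frac{Z\bigl(e^{G(u)}\Phi(\bar m(u))\bigr)}{Z(\Phi(\bar m(u)))}\right)du \;=\; \Lambda(G),
\end{equation*}
which is exactly the desired limit.

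The main (minor) obstacle is controlling the $t_N$ versus $1$ discrepancy uniformly in $x$, and verifying that the effective Lipschitz constant of $F$ in $\varphi$ is uniform in $u$; both reduce to the fact that $\varphi^*=+\infty$ makes $Z$ and $\log Z$ smooth on a compact set bounded away from $0$ and $+\infty$, which is why the assumption $\varphi^*=+\infty$ is used here. No ergodicity or dynamical argument is needed beyond what Theorem \ref{thm:gen_conv} already supplies.
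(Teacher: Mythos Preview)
Your proof is correct and follows essentially the same route as the paper: compute $\Lambda_N(G)$ explicitly using the product structure of $\nu_{ss}^N$ and the single-site Laplace transform $E_{\tilde\nu_\varphi}[e^{s\xi}]=Z(e^s\varphi)/Z(\varphi)$, then apply Theorem \ref{thm:gen_conv} with the function $F(\varphi,u)=\log\bigl(Z(e^{G(u)}\varphi)/Z(\varphi)\bigr)$ and the constant test function $1$. The only difference is that you keep track of the factor $t_N=N/\#\Lambda_N$ and argue separately that replacing $t_NG$ by $G$ is harmless, whereas the paper simply writes $N\langle\pi^N,G\rangle=\sum_{x}G(x/N)\xi(x)$ and absorbs this discrepancy (which is of order $1/N$) without comment.
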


\begin{proof}

We denote for $\lambda\in \mathbb{R}$ and $x\in\Lambda_N$ the  exponential moment of $\xi (x)$: 
$M_{N}(\lambda,x)=E_{\nu^N_{ss}}\left[ e^{\lambda\xi(x)} \right].$
We have that
$$M_{N}(\lambda,x) =\frac{1}{Z(\varphi_N(x))} \sum_{k=0}^{\infty}\frac{\left( e^{\lambda}\varphi_N(x)\right)^k}{g(k)!} = \frac{Z\left(e^{\lambda}\varphi_N(x)\right)}{Z(\varphi_N(x))}.$$
Observe that since $\nu_{ss}^N$ is product we have that 
\begin{eqnarray*}
\frac{\Lambda_{N}(G)}{N} &=&\frac{1}{N}\log E_{\nu_{ss}^N}\left[ e^{N\langle\pi,G\rangle} \right]=
\frac{1}{N}\log E_{\nu_{ss}^N}\left[ e^{\sum_{x\in\Lambda_N}G\Big(\tfrac{x}{N}\Big)\xi_x} \right]
=\dfrac{1}{N}\log E_{\nu _{ss}^N}\left[\prod_{x\in \Lambda_{N}} e^{G\Big(\tfrac x n\Big )\xi_{x}} \right]\\
&=&\dfrac{1}{N}\sum_{x\in \Lambda_{N}}\log M_N \left( G(\tfrac xN), x \right)
=\dfrac{1}{N}\sum_{x\in \Lambda_{N}}\log \dfrac{Z\left(e^{G\Big(\tfrac x n\Big)}\vf_{N}(x) \right)}{Z(\vf_{N}(x))} \
=\dfrac{1}{N}\sum_{x\in \Lambda_{N}} \mathfrak F\left(  \varphi_N (x), G(\tfrac xN)  \right)
\end{eqnarray*}
where $\mathfrak F(\varphi,u) = \log \dfrac{Z\left(e^{G(u)}\vf \right)}{Z(\vf)}$. Observe that $\mathfrak F$ is Lipschitz in the first component because $Z:[0,\infty) \to [1,\infty)$ is analytic. Hence we can apply Theorem \ref{thm:gen_conv} and obtain the result.
\end{proof}

\appendix 
%%%%%%%%%%%%%%%%%%%%%%%%%%%%%%%%%%%%% EXCLUSION PROCESS%%%%%%%%%%%%%%
\section{Macroscopic properties of NESS in boundary driven long range exclusion}
\label{app:hydrostaticsexclusion}

In this section, we present a summary of the stationary behaviour of the open boundary exclusion process with long jumps. Recall \eqref{genexcl} and note that we consider the process speeded up in the time scale $\Theta(N)$ as in \eqref{eq:time_scale}. Recall that $\tilde \alpha, \tilde \beta\in(0,1)$. 

%%%%%%%%%%%%%%%%%%%%%%%%%%%%%%%%%%%%%%%%%%%%%%
 \subsection{Hydrostatic limit}
 
 Let ${\mc M}^+$,  be the space of positive measures on $[0,1]$ with total mass bounded by $1$ and  equipped with the weak $\star$-topology. For any $\eta \in \Omega^N$ the empirical measure  ${ \pi}^N_{ex}:={ \pi}^N_{ex} (\eta) \in {\mc M}^+$ is defined by 
\begin{equation}
{\pi}^N_{ex} (\eta) ={\dfrac{1}{\# \Lambda_N}} \sum_{x\in\Lambda_N} \eta_x   \delta_{\tfrac xN}(du).
\end{equation}
Let $P^N$ be the probability measure on ${\mc M}^+$ obtained as the pushforward of $ \mu_{ss}^N$ by $\pi^N_{ex}$. We denote the action of $\pi_{ex}^N \in {\mc M}^+$ on a continuous function $G:[0,1]\to \RR$ by 
$$\langle \pi^N_{ex} , G \rangle = \int_{[0,1]} G(u) \ \pi^N_{ex}  (du).$$

\begin{thm}[Hydrostatic limit in mean]
\label{th:HLM} 
For any $G \in C^0 ([0,1])$  we have
$$\lim_{N\to\infty}  E_{\mu_{ss}^N}\left [ \langle \pi^N_{ex},G\rangle-\int_0^1\bar\rho(u)G(u)du\right]=0,$$
where  
\begin{itemize}
\item [a)] for $\theta <0$ and {$\gamma\in (0,2)$},  
$
{\bar  \rho} (u) = \dfrac{V_{0}(u)}{V_{1}(u)}.
$

\item  [b)] for $\theta =0$ and $\gamma\in (0,2)-\{1\}$,  $\bar \rho(\cdot)$ is the unique weak  solution of  \eqref{DRex_ZR}, with $\hat \kappa=\kappa$.

\item  [c)] for $\theta \in (0,\gamma-1)$ and $\gamma\in (1,2)$, $\bar \rho(\cdot)$ is the unique  weak solution of \eqref{Dex_ZR}.

 \item  [d)] for  $\theta = \gamma-1$ and $\gamma\in (1,2)$, $\bar \rho(\cdot)$ is the weak  solution of \eqref{Rex_ZR} with {$\hat \kappa=\kappa d$}.

\item  [e)]  for $\theta > 0$ and { $\gamma\in (0, 1]$} or for $\theta >  \gamma- 1$ and  $\gamma\in (1, 2)$, $\bar \rho(\cdot)$ is the weak solution of \eqref{Nex_ZR} with {$\hat M =\tfrac{\tilde \alpha+\tilde \beta}{2}$}, i.e. ${\bar \rho}=\tfrac{\tilde \alpha+\tilde \beta}{2} $.
\end{itemize}

%\begin{itemize}

%    \item [a)] for $\theta <0$ and $\gamma\in (1,2)$,  
%$
%{\bar  \rho} (u) = \dfrac{V_{0}(u)}{V_{1}(u)}.
%$
   
%    \item [b)] for  $\theta = 0$, $\bar\rho(\cdot)$ is the weak solution of \eqref{DRex_ZR},  with $\hat\kappa=\kappa$.

%\item [c)] for $\theta \in (0,\gamma-1)$  and  $\gamma\in(1,2)-{1}$, $\bar\rho(\cdot)$ is the weak solution of \eqref{Dex}. 

%\item [d)] for   $\theta=\gamma-1$ and  $\gamma\in(1,2)$,
 %   $\bar\rho(\cdot)$ is the weak solution of \eqref{Rex},  with $\hat\kappa=\kappa$ and $\hat d =d$.
    
 %   \item [e)] for $\theta>0 $ and  $\gamma\in (0,1)$ or $ \theta > \gamma -1 $ and  $\gamma\in (1,2)$, 
  %  $\bar\rho(\cdot)$ is the weak solution of \eqref{Rex}, with  $\hat\kappa={\hat d}=0$.
%\end{itemize}  
\end{thm}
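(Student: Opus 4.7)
The plan is to follow the standard three-step strategy for hydrostatic limits of boundary-driven exclusion processes with long jumps, implemented in cases (a)--(d) in \cite{cpb, BGBJ, BJ, BPS}: (i) tightness of the laws $\{P^N\}_{N\ge 1}$ on $\mathcal M^+$ endowed with the weak-$\star$ topology; (ii) identification of any weak-$\star$ limit point as concentrated on the density of a weak solution of the appropriate boundary value problem; (iii) invocation of uniqueness of that weak solution. Tightness is immediate since $\pi^N_{ex}$ has total mass at most one, and uniqueness is available from Lemma~\ref{lem:214} and the cited references, so all the work reduces to identifying the limit.

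The key identity is the stationarity of $\mu_{ss}^N$: for any smooth test function $G$,
\begin{equation*}
E_{\mu_{ss}^N}\bigl[\mathcal L_N\langle\pi^N_{ex},G\rangle\bigr]=0.
\end{equation*}
A direct computation from \eqref{generator} shows that the boundary contribution collapses to $r_N^-(x/N)(\tilde\alpha-\rho_N(x))+r_N^+(x/N)(\tilde\beta-\rho_N(x))$, where $\rho_N(x):=E_{\mu_{ss}^N}[\eta(x)]$. Performing a discrete summation by parts on the bulk term, exploiting that $r_N^\pm(x/N)\sim N^{-\gamma}r^\pm(x/N)$ and that the discrete regional fractional Laplacian applied to $G$ is of order $N^{-\gamma}$ with leading profile $\mathbb L G$, one arrives, after rescaling by $N^{\gamma-1}$, at the closed identity
\begin{equation*}
\frac{1}{N}\sum_{x\in\Lambda_N}\rho_N(x)(\mathbb L G)(\tfrac xN)+\kappa N^{-\theta}\cdot\frac{1}{N}\sum_{x\in\Lambda_N}G(\tfrac xN)\bigl[r^-(\tfrac xN)(\tilde\alpha-\rho_N(x))+r^+(\tfrac xN)(\tilde\beta-\rho_N(x))\bigr]=o_N(1).
\end{equation*}
The competition between the two terms distinguishes the regimes: $\theta<0$ forces pointwise $\rho=V_0/V_1$ (case (a)); $\theta=0$ retains both terms and yields the weak formulation of \eqref{DRex_ZR} (case (b)); for $\theta>0$ the reaction term is killed and only $\langle\rho,\mathbb L G\rangle=0$ survives. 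To discriminate further one takes test functions that do not vanish at the boundary, which is available only for $\gamma\in(1,2)$ since then $\mathcal H^{\gamma/2}\hookrightarrow C([0,1])$. A second integration by parts in $\mathbb L$ then produces a boundary contribution of order $N^{\gamma-1-\theta}$: divergent for $\theta<\gamma-1$, forcing $\rho(0)=\tilde\alpha$ and $\rho(1)=\tilde\beta$ (case (c)); finite at $\theta=\gamma-1$, giving the Robin trace with coupling $\kappa d$, where $d$ is defined in \eqref{eq:d} (case (d)); and vanishing for $\theta>\gamma-1$ (case (e)). The a priori $\mathcal H^{\gamma/2}$ bound needed to promote weak-$\star$ limits to elements of $\mathcal H^{\gamma/2}$ is obtained from the Dirichlet form of the stationary state, exactly as in \cite{BPS}.

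The Neumann case (e) requires one extra ingredient, since Lemma~\ref{lem:214} identifies the constant solution only once its total mass is prescribed. Here the static mapping of Proposition~\ref{pro_exzr} combined with the symmetry \eqref{eq:symmetry} is decisive: definition \eqref{tildealphabeta} gives $\tilde\alpha+\tilde\beta=1$, and Proposition~\ref{pro_exzr} together with \eqref{eq:symmetry} yields the microscopic symmetry $\rho_N(x)+\rho_N(N-x)=1$ for every $x\in\Lambda_N$. Averaging over $\Lambda_N$ and passing to the limit produces $\int_0^1\bar\rho(u)\,du=\tfrac12=\tfrac{\tilde\alpha+\tilde\beta}{2}$, after which Lemma~\ref{lem:214} forces $\bar\rho\equiv\tfrac{\tilde\alpha+\tilde\beta}{2}$.

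I expect the main obstacle to be the critical Robin regime $\theta=\gamma-1$, where one has to extract from the discrete computation the exact multiplicative constant $\chi_\gamma$ appearing in \eqref{eq:frac_deriv} and match it with $\kappa d$ on the right-hand side of \eqref{Rex_ZR}. This requires sharp boundary asymptotics of $r_N^\pm(x/N)$ and of the discrete fractional Laplacian applied to $G$ near $x=0,N$; these have already been established in \cite{cpb, BJ, BPS} and will be invoked as a black box.
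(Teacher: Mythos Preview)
Your outline is correct and tracks the paper's proof closely: tightness, energy estimates from \cite{BPS,cpb}, the stationarity identity $E_{\mu_{ss}^N}[\mathcal L_N\langle\pi^N_{ex},G\rangle]=0$, and a case analysis in $\theta$ that distinguishes the limiting PDE by the relative scaling of bulk and boundary terms. For cases (a)--(d) your sketch and the paper's argument are essentially the same; in particular the Robin case is handled in the paper exactly via the boundary asymptotics and replacement lemmas from \cite{BPS} that you propose to cite as a black box.

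The one place where you genuinely diverge from the paper is the identification of the mass in the Neumann case~(e). The paper does \emph{not} use the zero-range connection here: it shows $\bar\rho$ is constant from $\|\bar\rho\|_{\gamma/2}=0$ and then determines the constant by returning to the stationarity identity \eqref{Dyn_ex_n} with a sequence of smooth cutoffs $\chi^\varepsilon\uparrow 1$ symmetric about $1/2$, extracting the ratio $\int V_1/\int V_0$ (or its divergent analogue for $\gamma>1$) to get $M=(\tilde\alpha+\tilde\beta)/2$. Your route---invoking Proposition~\ref{pro_exzr} and the symmetry \eqref{eq:symmetry} to obtain the exact microscopic identity $\rho_N(x)+\rho_N(N-x)=\tilde\alpha+\tilde\beta=1$, then testing against $G\equiv 1$---is shorter and avoids the $\varepsilon$-cutoff and the separate treatment of $\gamma\lessgtr 1$. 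It does, however, import a fact specific to this particular model (the static zero-range/exclusion link), whereas the paper's argument is intrinsic to the exclusion process and would survive for other reservoir parameters not tied to a zero-range process. Both are valid; yours is cleaner here, the paper's is more portable.
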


\begin{proof}

We start by noting that a simple computation based on the fact that the mass of the system is finite, shows that the sequence $\{P^N\}_{N\geq 2}$ is tight 
and that all limit points are concentrated on measures $\pi(du)$ which are absolutely continuous with respect to Lebesgue measure on $[0, 1]$, i.e. $\pi(du)=\rho(u)du$. Let us also introduce $\{\bar\pi^N\}_{N\geq 2}= \left\{ E_{\mu_{ss}^N} [\pi_{ex}^N] \right\}_{N \ge 2}$ which forms a sequentially compact sequence of ${\mathcal M}^+$ whose limit points $\bar \pi (du)$ are absolutely continuous with respect to the Lebesgue measure on $[0, 1]$, i.e. $\bar\pi(du)=\bar\rho(u)du$. Let $\bar\pi(du)=\bar\rho(u)du$ be a limit point of $\{\bar\pi^N\}_{N\geq 2}$. Without loss of generality we can consider a subsequence for which $\{P^N\}_{N\geq 2}$ is also converging to a limit point denoted by $P^*$ (the corresponding expectation is denoted by ${E} ^*$). To lighten notation, in the sequel, we assume that we are taking the limit according to this subsequence, even if it is not specified.  Observe that $\bar \rho = E^* [\rho]$. Our goal is to show that $\bar\rho$ is unique and given as in Theorem \ref{th:HLM}. 

\medskip
If $\gamma \in (0,2)$, the energy estimates of Section 3.3 of \cite{cpb} show that  for $\theta\geq 0$
$$E^*\left[\Vert \rho\Vert_{\gamma/2}^2\right] <\infty$$
and for $\theta\leq 0$ 
$$ E^*\left[\int_0^1 \left\{\frac{(\tilde \alpha -\rho (u))^2}{u^\gamma} + \frac{(\tilde \beta -\rho (u))^2}{(1-u)^\gamma} \right\} du \right] < \infty.$$
From Jensen's  inequality, we have also 
$$\Vert \bar\rho\Vert_{\gamma/2} <\infty, \quad \int_0^1 \left\{\frac{(\tilde \alpha -\bar\rho (u))^2}{u^\gamma} + \frac{(\tilde \beta -\bar\rho (u))^2}{(1-u)^\gamma} \right\} du < \infty .$$ 

\medskip
It remains now to check that $\bar \rho$ satisfies the other conditions in the notions of stationary weak solutions.

\medskip
Recall \eqref{eq:time_scale}. Note that 
\begin{equation}\label{Dyn_ex}
\begin{aligned}
\Theta(N) \Lex\left\langle{\pi}^N_{ex}, G\right\rangle &=\frac{\Theta(N)}{\# \Lambda_N} \sum_{x \in \Lambda_{N}}\left(\mathbb{L}_{N} G\right)\left(\tfrac{x}{N}\right) \eta (x) \\
&+\frac{\kappa \Theta(N)}{ N^{\theta} \# \Lambda_N} \sum_{x \in \Lambda_{N}} G\left(\tfrac{x}{N}\right)\left\{r_{N}^{-}\left(\tfrac{x}{N}\right)\left(\xalpha-\eta (x)\right)+r_{N}^{+}\left(\tfrac{x}{N}\right)\left(\xbeta-\eta (x)\right)\right\}
\end{aligned}
\end{equation}
where the action of $\mathbb{L}_{N}$ on functions $G$ is defined by
\begin{equation}\label{discrete}
\left(\mathbb{L}_{N} G\right)\left(\tfrac{x}{N}\right)=\sum_{y \in \Lambda_{N}} p(y-x)\left[G\left(\tfrac{y}{N}\right)-G\left(\tfrac{x}{N}\right)\right].
\end{equation}
Taking   the  expectation  with  respect  to $\mu_{ss}^N$
on \eqref{Dyn_ex}, we get, from stationarity, that
\begin{equation}
\label{Dyn_ex_n}
\begin{aligned}
0 &=\frac{\Theta(N)}{\# \Lambda_N} \sum_{x \in \Lambda_{N}}\left(\mathbb{L}_{N} G\right)\left(\tfrac{x}{N}\right) E_{\mu_{ss}^N}[\eta(x)] \\
&+\frac{\kappa \Theta(N)}{ N^{\theta} \# \Lambda_N} \sum_{x \in \Lambda_{N}} G\left(\tfrac{x}{N}\right)\left\{r_{N}^{-}\left(\tfrac{x}{N}\right)\left(\xalpha-E_{\mu_{ss}^N}[\eta(x)]\right)+r_{N}^{+}\left(\tfrac{x}{N}\right)\left(\xbeta-E_{\mu_{ss}^N}[\eta(x)]\right)\right\}.
\end{aligned}
\end{equation}
Recall \eqref{eq:functions_r}. We define the functions $r_{N}^{\pm}: [0,1]\to \RR$ as the linear interpolation of $r_N^- (\tfrac{x}{N})$ and $ r_N^+ (\tfrac{x}{N})$
for all $x \in \Lambda_{N}$ with $r_{N}^{\pm}(0) = r_{N}^{\pm}(\tfrac{1}{N})$ and $r_{N}^{\pm}(1) = r_{N}^{\pm}(\tfrac{N-1}{N})$.
By Lemma 3.3 in  \cite{BJ}, for $0<\gamma <2$ we have that
\begin{equation}
\label{F_convergencer}
\begin{split}
&\lim _{N\to \infty} N^{\gamma} (r_N^-)(u) =r^-(u),\,\,\,\lim _{N\to \infty} N^{\gamma} (r_N^+)(u) =r^+(u)
\end{split} 
\end{equation}
uniformly in  $[a,1-a]$ for $a\in (0,1)$ and  from that {lemma} it also follows  that 
\begin{equation}
\label{F_convergenceLL}
\lim _{N\to \infty}N^{\gamma}({\bb L}_N G)(u)  = (\LL G)(u)
\end{equation}
uniformly in  $[a,1-a]$, for all functions $G$ with compact support included in $[a,1-a]$. 

\bigskip 
Now, we split the analysis by taking into account the value of $\theta$.
\bigskip

\textbf{ Case  $\theta < 0$:} In this regime we take  $G\in C_c^\infty((0,1))$ and  $\Theta(N) = N^{\theta+\gamma}$. {Observe that \eqref{Dyn_ex_n} can be written as 
\begin{equation}
\begin{aligned}
0 =\langle \bar\pi^N,N^{\gamma+\theta}\mathbb{L}_{N} G\rangle -\kappa \langle \bar\pi^N, G N^\gamma(r_N^-+r_N^+)\rangle
+\frac{\kappa}{\# \Lambda_N } \sum_{x \in \Lambda_{N}} G\left(\tfrac{x}{N}\right) N^\gamma\left\{r_{N}^{-}\left(\tfrac{x}{N}\right)\xalpha+r_{N}^{+}\left(\tfrac{x}{N}\right)\xbeta\right\}
\end{aligned}
\end{equation}
}
From the weak convergence together with \eqref{F_convergencer} and \eqref{F_convergenceLL}, last display converges, as $N\to+\infty,$ to 
    $$-\kappa\int_0^1 G(u) \left\{ \bar\rho(u)V_1(u) du - V_0(u)\right\} du =0$$
  {which implies that $\bar\rho(u)=\frac{V_0(u)}{V_1(u)}$ for almost every $u$  (see also Remark (2.14) in \cite{cpb})}.
   
  \medskip
\textbf{  Case  $\theta = 0$:} In this regime we take $G\in C_c^\infty((0,1))$ and  $\Theta (N) = N^{\gamma}$. From weak convergence together with \eqref{F_convergencer} and \eqref{F_convergenceLL} we get that 
\begin{equation*}
    \begin{split}
        0=\int_0^1 (\mathbb{L}G)(u)\bar\rho(u)du -\kappa\int_0^1 G(u)\bar\rho(u)V_1(u) du +\kappa\int_0^1 G(u) V_0(u) du.
    \end{split}
\end{equation*}
Hence we have that $F_{RD} (\bar\rho, G)=0$ for any $G\in C_c^\infty((0,1))$ (with $\hat\kappa=\kappa )$. 

\medskip
\textbf{ Case  $\theta \in (0,\gamma-1)$ and $\gamma \in (1,2)$:}  In this regime we take $G\in C_c^\infty((0,1))$ and $\Theta(N) = N^{\gamma}$. From the weak convergence, the  rightmost term in the first line of  \eqref{Dyn_ex_n} converges, as $N\to+\infty$, to
    $$\int_0^1 (\mathbb{L}G)(u)\bar\rho(u)du.$$
    Moreover, the term on the second line of \eqref{Dyn_ex}  can be bounded from above by a constant times 
   $$ N^{\gamma-\theta-1}\sum_{x\in \Lambda_N} x^{-\gamma}G(\tfrac{x}{N})
     \lesssim N^{-\theta} 
    $$ plus lower order terms in $N$ {which} vanish as $N\to+\infty$. Hence we have that $F_{Dir} (\bar\rho, G)=0$ for any $G\in C_c^\infty((0,1))$.

\medskip
\textbf{  Case  $\theta = \gamma-1$ and $\gamma \in (1,2)$: } In this regime we take  $G\in C^\infty([0,1])$ and $\Theta(N) = N^{\gamma}$. We start by noting that from Lemma 
5.1 of \cite{BPS}, we have that
$$\lim _{N \rightarrow \infty} N^{-1} \sum_{x \in \Lambda_{N}}\left|N^{\gamma}\left(\mathbb{L}_{N} G\right)\left(\tfrac{x}{N}\right)-(\mathbb{L} G)\left(\tfrac{x}{N}\right)\right|=0$$
for functions $G\in C^\infty([0,1])$ and $\gamma\in (0,2).$ Moreover, the first term on the second line of  \eqref{Dyn_ex_n} can be rewritten as 
\begin{equation}\label{eq_new_rr}
    \begin{split}
        &\frac{\kappa N^{\gamma}}{ N^{\gamma-1} \# \Lambda_N}\left(\xalpha-E_{\mu_{ss}^N}\Big[{A}[\eta,I_{\varepsilon N}(0)]\Big]\right) \sum_{x \in \Lambda_{N}} G\left(\tfrac{x}{N}\right) r_{N}^{-}\left(\tfrac{x}{N}\right)\\
        +&\frac{\kappa N^{\gamma}}{ N^{\gamma-1} \# \Lambda_N} \sum_{x \in \Lambda_{N}} G\left(\tfrac{x}{N}\right) r_{N}^{-}\left(\tfrac{x}{N}\right)\left(E_{\mu_{ss}^N}\Big[ {A}[\eta,I_{\varepsilon N}(0)]\Big]-\eta (x)\right)
        %\\
         %+&\frac{\kappa N^{\gamma}}{(N-1) N^{\gamma-1}}\left(\xbeta-%A[\eta,I_{\varepsilon N}(N)\right) \sum_{x \in \Lambda_{N}} G\left(\tfrac{x}{N}%\right) r_{N}^{-}\left(\tfrac{x}{N}\right)
        %+\frac{\kappa N^{\gamma}}{(N-1) N^{\gamma-1}} \sum_{x \in %\Lambda_{N}} G\left(\tfrac{x}{N}\right) r_{N}^{-}\left(\tfrac{x}{N}\right)%\left(A[\eta,I_{\varepsilon N}(N)]-\eta^{N}(x)\right)
    \end{split}
\end{equation}
plus analogous terms with respect to the right boundary. 
The expectation of the second term above converges to zero as $N\to+\infty$ from Lemma  5.9 of \cite{BPS} {(see Remark 5.10 there)}. {We note that, to apply Lemma 5.9 of \cite{BPS}, which is written with the time integral and in the $L^1$ sense, in last argument, we need to use the fact that $\mu_{ss}^N$ is a stationary measure, to introduce the time integral in the rightmost term of \eqref{eq_new_rr} and then use the aforementioned lemma.} Now, for  the first  term, we can perform a Taylor expansion on $G$ obtaining the following expression,
\begin{equation*}
\begin{split}
&\frac{\kappa N^{\gamma}G(0)\left(\xalpha- E_{\mu_{ss}^N}\Big[ {A}[\eta,I_{\varepsilon N}(0)]\Big]\right)}{N^{\gamma-1} \ \# \Lambda_N}  \sum_{x \in \Lambda_{N}} r_{N}^{-}\left(\tfrac{x}{N}\right)
+\frac{\kappa  G^{\prime}(0)\left(\xalpha-E_{\mu_{ss}^N}\Big[ {A}[\eta,I_{\varepsilon N}(0)]\Big]\right)}{\# \Lambda_N} \sum_{x \in \Lambda_{N}} x r_{N}^{-}\left(\tfrac{x}{N}\right)
%+&\frac{\kappa N^{\gamma}G(1)\left(\xbeta-A[\eta,I_{\varepsilon N}(N)]%\right)}{(N-1) N^{\gamma-1}}  \sum_{x \in \Lambda_{N}} r_{N}^{+}\left(\frac{x}%{N}\right)\\
%&+\frac{\kappa  G^{\prime}(1)\left(\xbeta-A[\eta,I_{\varepsilon N}(N)]\right)}%{N-1} \sum_{x \in \Lambda_{N}} (1-x) r_{N}^{+}\left(\frac{x}{N}\right),
\end{split}
\end{equation*}
plus lower order terms in $N$. Observe that 
\begin{equation*}
  \begin{split}
\dfrac{\kappa  \left\vert G^{\prime}(0)\left(\xalpha-E_{\mu_{ss}^N}\Big[ {A}[\eta,I_{\varepsilon N}(0)]\Big]\right) \right\vert}{\# \Lambda_N} \sum_{x \in \Lambda_{N}} x r_{N}^{-}\left(\tfrac{x}{N}\right)
\lesssim &\frac{1}{\# \Lambda_N} \sum_{x\in \Lambda_N} x^{1-\gamma} = \mathcal{O}(N^{1-\gamma}),
  \end{split}
\end{equation*}
which goes to zero as $N$ goes to infinity. {Finally, the remaining term can be treated by using the weak convergence, the fact that the limiting measure $\bar\pi(du)$ is absolutely continuous with respect to the Lebesgue measure with density $\bar\rho$ and also that $$\lim_{N\to+\infty}\sum_{x\in\Lambda_N}r_N^\pm(\tfrac xN)=d.$$} Hence we get that $F_{Rob} (\bar\rho, G)=0$ (with $\hat \kappa =\kappa d$) for any $G\in C^\infty([0,1])$.

\medskip
\textbf{  Case  $\theta >\gamma-1$ {and $\gamma\in(1,2)$}: } 
The proof in this regime is completely analogous to the previous case. Nevertheless,  we could also obtain the result as a consequence of the hydrostatic limit, with a convergence in probability, by following the same strategy described in \cite{Tsunoda}. Since we do not need this stronger convergence to attain  our results, we did not pursue this issue here. 

\medskip
\textbf{  Case  $\theta > 0$ and  $\gamma \in (0,1)$: } 
In this case the analysis of the first term of {\eqref{Dyn_ex_n} is given by an approximation argument of the operator $\mathbb{L}$ given in Lemma 5.1 of \cite{BPS}. }
Since $G(x/N)$ and $ \eta (x) $ are bounded we know that the second term of \eqref{Dyn_ex_n} is bounded by a constant times 
$$N^{\gamma-\theta-1} \sum_{x \in \Lambda_{N}} \left\{ r_{N}^{-}\left(\tfrac{x}{N}\right) +r_{N}^{+}\left(\tfrac{x}{N}\right)\right\}.$$
Since $\gamma\in (0,1)$, we lose the convergence of the partial sum above. However,   it is not difficult to see that 
\begin{equation}
    \begin{split}
       & N^{\gamma-\theta-1} \sum_{x \in \Lambda_{N}} \left\{ r_{N}^{-}\left(\tfrac{x}{N}\right) +r_{N}^{+}\left(\tfrac{x}{N}\right)\right\}
      \lesssim N^{-\theta},
    \end{split}
\end{equation}
which vanishes as $N$ goes to $\infty$. Hence $F_{Neu} (\bar\rho, G)=0$ for any $G\in C^\infty([0,1])$.

\medskip
In the two last cases, we have also to show that $\int_0^1 {\bar\rho} (u) du =\tfrac{\tilde\alpha + \tilde\beta}{2}$. In fact, since in these cases, $F_{Neu} (\bar\rho, G)=0$ for any $G\in C^\infty([0,1])$, we can conclude that $\bar\rho$ is equal to a constant $M$ by using  the same argument as in Lemma \ref{lem:214} when showing that $\Vert {\bar \rho}\Vert_{\gamma/2}=0$.  

%By taking $G=1$ in \eqref{Dyn_ex_n} we get 
%\begin{equation*}
%\begin{split}
%     &\frac{1}{N}\sum_{x \in \Lambda_{N}} \left\{\tilde\alpha N^\gamma r_{N}^{-}\left(\tfrac{x}{N}\right)+ \tilde\beta N^\gamma r_{N}^{+}\left(\tfrac{x}{N}\right)\right\} = \int_0^1 (N^\gamma r_N^- (u) +N^\gamma r_N^+ (u)) {\bar\pi}^N (du).
%\end{split}
%\end{equation*}
%Recall \eqref{F_convergencer}. 

%In the case $\gamma\in (0,1)$, the left hand-side converges then to $\int_0^1 V_1 (u) du$ and the right hand-side converges{\footnote{Since the function $r^\pm$ are singular at the boundaries, we have to split the sums over $\Lambda_N$ into the sums over $\Lambda_N\cap [\varepsilon N, 1-\varepsilon N]$ and its complement. The sums on the complement are easily bounded by a constant times $\varepsilon ^{1-\gamma}$. For the sums over over $\Lambda_N\cap [\varepsilon N, 1-\varepsilon N]$, there are no singularities so that we can pass to the limit in $N$. Then we let $\varepsilon\to 0$.}} to $\int_0^1 V_0 (u) {\bar \rho} (u) du = M \int_0^1 V_0 (u) du$, we can conclude that $M=\tfrac{\int_0^1 V_1 (u) du}{\int_0^1 V_0 (u) du}=\tfrac{\tilde\alpha + \tilde\beta}{2}$. 

%In the case $\gamma\in (1,2)$ the integrals are diverging and the previous argument fails. 
Recall the definition of ${\bar\pi}^N$ given at the beginning of the proof. We consider a sequence $(\chi^\varepsilon)_{0<\varepsilon< 1/4}$ of smooth functions with values in $[0,1]$, symmetric with respect to $1/2$, equal to $0$ on $[0,\varepsilon/2]$ and to $1$ on $[\varepsilon,1/2]$. Observe that $(\chi^\varepsilon)_\varepsilon$ converges in $L^1$ as $\varepsilon$ goes to $0$ to the constant function equal to $1$. Taking $G=\chi^\varepsilon$ in \eqref{Dyn_ex_n} we get 
\begin{equation*}
\begin{split}
     &\frac{1}{N}\sum_{x \in \Lambda_{N}} \chi^\varepsilon \left( \tfrac xN\right)\left\{\tilde\alpha N^\gamma r_{N}^{-}\left(\tfrac{x}{N}\right)+ \tilde\beta N^\gamma r_{N}^{+}\left(\tfrac{x}{N}\right)\right\}\\
     & = \int_0^1 \chi^\varepsilon (u)  (N^\gamma r_N^- (u) +N^\gamma r_N^+ (u)) {\bar\pi}^N (du) + N^{-\theta} \frac1N \sum_{x \in \Lambda_N} (N^\gamma \mathbb L_N \chi^\varepsilon) \left( \tfrac xN \right) E_{\mu_{ss}^N} \left[ \eta (x)\right].
\end{split}
\end{equation*}  
By \eqref{F_convergenceLL} and \eqref{F_convergencer}, we can then replace in the previous expression $N^\gamma \mathbb L_N$ by $\mathbb L$ and $N^\gamma r_N^\pm$ by $r^\pm$. Recall that ${\bar\pi}^N (du)$ converges weakly to $\bar\rho (u) du=M du$. Since $\theta>0$ we get 
\begin{equation}
     \int_0^1 \chi^\varepsilon (u) V_1 (u) du = M \int_0^1 \chi^\varepsilon (u) V_0 (u) du.
\end{equation}
In the case $\gamma \in (0,1)$ the functions $V_0$ and $V_1$ are integrable and we get by sending $\varepsilon$ to $0$ that
$$M=\frac{\int_0^1 V_1 (u) du}{\int_0^1 V_0 (u) du}=\frac{\tilde\alpha + \tilde\beta}{2}.$$
In the case $\gamma \in (1,2)$ the integrals are diverging but 
\begin{equation*}
 \int_0^1 \chi^\varepsilon (u) V_1 (u) du \sim \cfrac{c_\gamma}{\gamma (\gamma-1)}\varepsilon^{1-\gamma}\left[\tilde \alpha +\tilde\beta\right], \quad \int_0^1 \chi^\varepsilon (u) V_0 (u) du \sim 2 \cfrac{c_\gamma}{\gamma (\gamma-1)}\varepsilon^{1-\gamma}\ 
\end{equation*}
therefore we get again that 
$$M=\frac{\tilde\alpha + \tilde\beta}{2}.$$

\medskip 

\textbf{Conclusion:} All limit points of the sequence $\{\bar\pi^N (du) \}_{N\ge 2}$ are in the form $\bar\rho (u) du$ where $\bar\rho$ is a weak solution of the hydrostatic equation. By uniqueness of weak solutions for these equations, $\bar \rho$ is unique and therefore the sequence is converging, without extracting a subsequence,  to this unique weak solution.
\end{proof}

\begin{lem}
\label{lem:continuity}
The profiles $\bar \rho$ in Theorem \ref{th:HLM} are continuous in $(0,1)$. 
\end{lem}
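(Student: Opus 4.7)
The proof proceeds case by case following the five regimes in Theorem \ref{th:HLM}.

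For case (a), where $\theta<0$, the explicit formula $\bar\rho(u)=V_0(u)/V_1(u)$ together with \eqref{eq:useful} and \eqref{def:rpm} immediately yields continuity on $(0,1)$: the functions $r^\pm$ are continuous on $(0,1)$, hence so are $V_0$ and $V_1$, and $V_1(u)=r^-(u)+r^+(u)>0$ on $(0,1)$ makes the ratio well defined and continuous. Case (e) is trivial since $\bar\rho$ is constant. For cases (c), (d), and for case (b) when $\gamma\in(1,2)$, the profile $\bar\rho$ lies in $\mathcal H^{\gamma/2}$ with $\gamma>1$, so the Sobolev embedding stated in Definition \ref{DefSobolevspace} (invoking Theorem 8.2 of \cite{DNPV}) gives that $\bar\rho$ coincides a.e.\ with a function continuous on $[0,1]$; restricting to $(0,1)$ finishes these cases.

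The only delicate situation is case (b) with $\gamma\in(0,1)$, where $\mathcal H^{\gamma/2}$ no longer embeds into $C^0$ and continuity must be extracted from the equation. My plan is to exploit the distributional form of \eqref{DRex_ZR}, namely $\mathbb L\bar\rho=\kappa(V_1\bar\rho-V_0)$, together with two facts already established: first, by the last paragraph of the proof of Theorem \ref{thm:gen_conv}, $\bar\rho$ takes values in $[\tilde\alpha,\tilde\beta]$, so $\bar\rho\in L^\infty([0,1])$; second, on any compact $K\Subset(0,1)$ the functions $V_0$ and $V_1$ are bounded, since the only singularities of $r^\pm$ are at $0$ and $1$. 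Consequently $\kappa(V_1\bar\rho-V_0)\in L^\infty(K)$, so $\bar\rho$ is a bounded weak solution of a regional fractional Laplacian equation with locally bounded right-hand side. Standard interior H\"older regularity for such equations then gives $\bar\rho\in C^\alpha_{loc}((0,1))$ for some $\alpha>0$, which is stronger than what we need.

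The main obstacle is the last step. Interior regularity for the \emph{regional} fractional Laplacian on $[0,1]$ is not completely classical in the way that the Caffarelli--Silvestre / Silvestre $C^\alpha$ estimates are for the usual fractional Laplacian on $\mathbb R$; one has to verify that the ``missing tail'' contribution coming from restricting the kernel to $[0,1]$ can be absorbed into a bounded zeroth-order term on any compact interior subset, reducing the problem to the standard interior estimate. A clean alternative is the probabilistic route: represent $\bar\rho$ via the censored $\gamma$-stable process on $[0,1]$ (which generates $\mathbb L$) and obtain local continuity from the strong Feller property of this process away from the boundary. Either approach completes the proof of the lemma.
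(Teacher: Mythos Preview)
Your treatment of the easy cases---explicit formula for $\theta<0$, constant profile in the Neumann regime, Sobolev embedding via \cite{DNPV} when $\gamma\in(1,2)$---matches the paper exactly.

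One logical slip: to get $\bar\rho\in L^\infty$ in the hard case you invoke the last paragraph of the proof of Theorem~\ref{thm:gen_conv}, but that paragraph itself uses Lemma~\ref{lem:continuity} (continuity of $\bar\rho$) to pass from the integral inequality to the pointwise bound. This would be circular. The fix is trivial---the very definition of weak solution already stipulates $\bar\rho:[0,1]\to[0,1]$, so boundedness is free---but you should cite that instead.

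For the delicate case $\theta=0$, $\gamma\in(0,1)$, your first route (absorb the missing tail of the regional operator into a locally bounded zeroth-order term, then apply interior regularity for the full fractional Laplacian) is precisely what the paper does, but the paper carries it out rather than leaving it as a plan. Concretely: extending $\bar\rho$ by $0$ outside $(0,1)$, the identity $\mathbb L\bar\rho(u)=-|\Delta|^{\gamma/2}\bar\rho(u)+V_1(u)\bar\rho(u)$ on $(0,1)$ converts \eqref{DRex_ZR} into a fractional Schr\"odinger equation $|\Delta|^{\gamma/2}f+qf=0$ (in the distributional sense) with $q$ a linear combination of $r^\pm$. The paper then invokes Proposition~6.1 of Bogdan--Byczkowski \cite{BB}, which gives continuity of distributional solutions once $q$ belongs to the local Kato class of exponent $\gamma$; checking that $r^\pm$ satisfy the Kato condition is elementary. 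Your sketch stops short of identifying the relevant regularity statement and the condition ($q$ in the Kato class) that makes it applicable---``standard interior H\"older regularity'' is not quite enough of a citation here, as you yourself note.

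Your second route via the censored $\gamma$-stable process is plausible in spirit but would require more than the strong Feller property of the free censored semigroup: the equation has the reaction term $-\kappa V_1\bar\rho$, so you would need a Feynman--Kac representation with the (locally bounded, signed) potential $V_1$ and then regularity of the resulting gauge, which again lands you in Kato-class territory and essentially back at \cite{BB}.
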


\begin{proof}
In the case $\theta<0$ the claim follows easily since the profile is explicit. We consider now $\theta\ge 0$. If $\gamma \in (1,2)$, by definition of a weak solution, we know that $\bar \rho$ is bounded and belongs to $\mathcal H^{\gamma/2}$ and from Theorem 8.2 of \cite{DNPV}, we conclude that $\bar \rho $ is $\tfrac{\gamma-1}{2}$-Hölder in $[0,1]$, therefore continuous in $(0,1)$. If $\gamma \in (0,1)$ and $\theta >0$, the profile is constant and therefore continuous. The only missing case  is $\gamma \in (0,1)$ and $\theta=0$. We have hence to prove that the stationary solution $\rho$ of the regional fractional reaction-diffusion equation \eqref{DRex_ZR} is continuous in $(0,1)$ when $\gamma\in (0,1)$. It is known that if $\gamma \in (0,1]$, the condition $f \in {\mathcal H}^{\gamma/2}$ does not guarantee, contrarily to the case $\gamma \in (1,2)$, that $f$ is continuous. Therefore the continuity property of $\rho$ can only result from the fact that $\rho$ satisfies the weak formulation of \eqref{DRex_ZR}.
This property is a consequence of potential theory for (fractional) Shr\"odinger theory developed in \cite{BB}, more exactly of Proposition 6.1 of that article that we restate in our particular context. Before doing so,  we introduce a  few notations. 

The fractional Laplacian $\vert \Delta\vert^{\gamma/2}$ on $\mathbb R$ is the operator acting on functions $f:\mathbb R \rightarrow \RR$ such that
\begin{equation}
\label{eq:integrability}
\int_{\mathbb R}  \cfrac{|f(u)|}{(1 +|u|)^{1+\gamma}} du < \infty
\end{equation}
as
\begin{equation}
\label{definition}
-(\vert \Delta\vert^{\gamma/2} f)(u) = c_\gamma  \lim_{\epsilon \to 0} \int_{\mathbb R} {\bb 1}_{|u-v| \ge \epsilon} \cfrac{f(v) -f(u)}{|u-v|^{1+\gamma}} dv,
\end{equation}
for any $u \in \mathbb R$ if  the limit exists (which is for example the case for smooth compactly supported functions). It can be extended into the weak fractional Laplacian (that we denote abusively by the same notation) by duality: For any $f$ satisfying \eqref{eq:integrability}, $\vert \Delta\vert^{\gamma/2} f$ is the distribution (or generalized function) on $\mathbb R$ satisfying the identity  $\langle \vert \Delta\vert^{\gamma/2} f, h \rangle = \langle f, \vert \Delta\vert^{\gamma/2} h \rangle$,  for any compactly supported function $h$ on $\mathbb R$. 

Property 6.1 of \cite{BB} claims that if $f:\mathbb R \to \mathbb R$ is a solution, in the distributional sense{\footnote{It means that for any $h \in C_c^\infty ((0,1))$, $ \int_0^1 f(u) \vert \Delta\vert^{\gamma/2} h (u) \, du \, +\, \int_0^1 f(u) q(u) \, du =0 $. }}, on $(0,1)$ of the equation
\begin{equation}
\label{eq:skato}
   \vert \Delta\vert^{\gamma/2} f + q f =0
\end{equation}
then $f$ is continuous on $(0,1)$ as soon as the function $q:(0,1) \to \mathbb R$ belongs to the (local) Kato class of exponent $\gamma$, i.e.
\begin{equation*}
\lim_{r\to 0}\,  \sup_{u\in \mathbb R} \int_{u-r}^{u+r} \cfrac{ ({\mathbb 1}_{[\varepsilon, 1-\varepsilon]} q) (u)  }{|u-v|^{1-\gamma}} \ dv =0  
\end{equation*}
for any $\varepsilon \in (0,1/2)$. It is not difficult to see that if $\rho$ is the weak solution of \eqref{DRex_ZR} and is extended by $0$ outside of $(0,1)$, then $\rho$ satisfies, in the distributional sense, \eqref{eq:skato} on $(0,1)$ for $q$ given as a linear combination of $r^-$ and $r^+$ (defined by \eqref{def:rpm}). Hence to conclude the proof of the lemma, it is sufficient to prove that $r^\pm$ belong to the (local) Kato class of exponent $\gamma$. This  exercise is left to the interested reader.
\end{proof}
\subsection{Fractional Fick's Law of the boundary driven exclusion}
 
By adapting the strategy of \cite{BJ} we can obtain the  ``fractional Fick's Law'' which is given in the next theorem. The expression of the current is given by 
\begin{equation}
\label{eq:currentexclusion0}
\begin{split}
W_x^{ex}(\eta)=  &\sum_{\substack {1 \le y\le x-1 \\
 x-1<z\le N-1}}  p(z-y) (\eta(y)  -\eta(z) ) \\
+&{\frac{\kappa}{ N^{\theta}}}\Bigg[\sum_{x \le z \le N-1} r_N^-(\tfrac zN) ( \tilde\alpha -\eta(z)) -\sum_{1 \le y\le x-1}r_N^+(\tfrac yN)   (\tilde\beta-\eta(y))\Bigg].
\end{split}
\end{equation}
\begin{thm}(Fractional Fick's law)
\label{thm:Fick}

Let $\bar \rho(\cdot)$ be the hydrostatic profile of the boundary driven exclusion given in Theorem \ref{th:HLM}. 
For $u\in (0,1)$ the following fractional Fick's law holds, {apart from the case $\theta=0$ and $\gamma=1$} :
\begin{equation}\label{lim_fick}\lim_{N\to\infty}\frac{1}{B_N(\theta)} {E}_{
\mu_{ss}^N}[ W_{[uN]}^{ex}] = \int_0^1 h_\theta(u)\bar\rho(u)du + C(\xalpha,\xbeta,\theta),\end{equation}
where
\begin{eqnarray}
\label{EPB}
B_N(\theta): =    N^{1-\gamma}\bb{1}_{\theta \geq 0}+
      {N^{1-\theta-\gamma}}\bb {1}_{\theta < 0},
\end{eqnarray}
 the function $h_\theta:(0,1)\to\mathbb{R}$ is given by
 {\begin{equation} \label{function_h}
h_\theta(u)=\begin{cases}
c_\gamma\left(\dfrac{\kappa }{\gamma}\textbf{1}_{\theta\leq 0} {+}\dfrac{1}{1-\gamma}\textbf{1}_{\theta\geq 0}\right)[(1-u)^{1-\gamma}-u^{1-\gamma}], \quad \textrm{if} \quad \gamma\neq 1 ,\\c_\gamma 
\textbf{1}_{\theta\geq 0}[\log(1-u)-\log(u)], \quad \textrm{if} \quad \gamma= 1 
\end{cases}\end{equation}}

and 
\begin{equation}\label{constant_c}  C(\xalpha,\xbeta, \theta) =  
      \dfrac{c_\gamma \kappa(\xalpha -\xbeta)}{\gamma(2-\gamma)}\textbf{1}_{ \theta \leq  0, }
      \end{equation}
      
This implies that
\begin{itemize}
    \item [a)] for $\theta < 0$,
 \begin{equation}
 \label{eq:fl67}
 \begin{split}
 \lim_{N \to \infty}\frac{1}{N^{1-\theta-\gamma}}\EE_{ \ex }[{W_{[uN]}^{ex}}] &= \kappa\int_{u}^{1}(\xalpha -\bar\rho(v))r^{-}(v)dv- \kappa\int_{0}^{u}(\xbeta -\bar\rho(v))r^{+}(v)dv\\
&=\kappa c_\gamma \gamma^{-1} \int_0^1\dfrac{\xalpha-\xbeta}{v^\gamma +(1-v)^\gamma}dv;
 \end{split}
 \end{equation}

 \item [b)] for $\theta = 0$,
\begin{equation}
 \label{eq:fl68}
 \begin{split}
 \lim_{N \to \infty} \frac{1}{N^{1-\gamma}} \EE_{\ex }[{W_{[uN]}^{ex}}] =c_{\gamma} \int_{0}^u \; \int_{u}^{1} \, \cfrac{{ \bar\rho} (v) -{ \bar\rho} (w)}{(w-v)^{1+\gamma}}\, dw dv &+ \kappa\int_{u}^{1}(\xalpha -\bar\rho(v))r^{-}(v)dv\\
&- \kappa\int_{0}^{u}(\xbeta -\bar\rho(v))r^{+}(v)dv;
 \end{split}
 \end{equation}
    \item [c)] for $\theta >0$,
\begin{equation}
 \label{eq:fl69}
 \begin{split}
 \lim_{N \to \infty} \frac{1}{N^{1-\gamma}}\EE_{\ex }[{W_{[uN]}^{ex}}] &=c_{\gamma} \int_{0}^u \; \int_{u}^{1} \, \cfrac{{ \bar\rho} (v) -{ \bar\rho} (w)}{(w-v)^{1+\gamma}}\, dw dv.
 \end{split}
 \end{equation}
\end{itemize}
       \end{thm}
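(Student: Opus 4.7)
The plan is to start from the microscopic current in \eqref{eq:currentexclusion0}. Taking expectation with respect to $\mu_{ss}^N$ and setting $\rho_N(x) := E_{\mu_{ss}^N}[\eta(x)]$, $x_u := [uN]$, the quantity under study splits as
$$
E_{\mu_{ss}^N}\!\left[W_{x_u}^{ex}\right]
= \underbrace{\sum_{\substack{1\le y \le x_u-1\\ x_u \le z \le N-1}}\!\! p(z-y)\,(\rho_N(y)-\rho_N(z))}_{=:\, T^{\rm b}_N(u)}
\,+\, \frac{\kappa}{N^\theta}\,T^{\rm bd}_N(u),
$$
where $T^{\rm bd}_N(u)$ collects the boundary contributions involving $r_N^\pm$. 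A direct scale count using $p(k)=c_\gamma k^{-1-\gamma}$ and $N^\gamma r_N^\pm(\cdot) \to r^\pm(\cdot)$ (see \eqref{F_convergencer}) gives $T^{\rm b}_N \asymp N^{1-\gamma}$ and $T^{\rm bd}_N \asymp N^{1-\gamma}$, which explains the global normalisation $B_N(\theta)^{-1}$ of \eqref{EPB}: for $\theta>0$ only the bulk term survives, for $\theta<0$ only the boundary term survives (with the additional factor $N^{-\theta}$), and both balance at $\theta = 0$.

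For the boundary piece, the uniform convergence $N^\gamma r_N^\pm \to r^\pm$ on compact subsets of $(0,1)$ from \eqref{F_convergencer}, combined with the hydrostatic limit in mean (Theorem \ref{th:HLM}) applied to functions of the form $r^-\mathbf 1_{[u,1]}$ and $r^+\mathbf 1_{[0,u]}$ (which are bounded on the relevant portion of $[0,1]$ and can be sandwiched by continuous compactly supported test functions, with errors controlled by $\sup_N \|\rho_N\|_\infty \le 1$), yields
$$N^{\gamma-1}\,T^{\rm bd}_N(u) \;\longrightarrow\; \int_u^1 r^-(v)(\tilde\alpha-\bar\rho(v))\,dv - \int_0^u r^+(v)(\tilde\beta - \bar\rho(v))\,dv.$$

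The main obstacle is the bulk double sum, for which we must establish
$$N^{\gamma-1}T^{\rm b}_N(u) \;\longrightarrow\; \mathcal I(u) := c_\gamma \int_0^u\!\!\int_u^1 \frac{\bar\rho(v)-\bar\rho(w)}{(w-v)^{1+\gamma}}\,dw\,dv,$$
the integral being interpreted as a principal–value/Hadamard expression near $v=w=u$. The strategy is to fix a cutoff $\varepsilon > 0$ and split the sum according to whether $z-y \ge \varepsilon N$ or $z - y < \varepsilon N$. On the far–diagonal piece the kernel $(w-v)^{-1-\gamma}$ is bounded on the rescaled region, so by decomposing $\rho_N(y)-\rho_N(z) = [\rho_N(y)-\bar\rho(y/N)] - [\rho_N(z)-\bar\rho(z/N)] + [\bar\rho(y/N)-\bar\rho(z/N)]$ and invoking Theorem \ref{th:HLM} to control the first two terms (while the last is a genuine Riemann sum), one obtains convergence of this piece to the truncation of $\mathcal I(u)$. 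The near–diagonal piece is the technical crux: a change of variables $k = z-y$ leads to a bound of the type $\sum_{k\le \varepsilon N} p(k)\bigl|\sum_{y}(\rho_N(y)-\rho_N(y+k))\bigr|$, which for $\gamma \in (0,1)$ is $O(\varepsilon^{1-\gamma})$ by the trivial estimate $|\rho_N(y)-\rho_N(y+k)|\le 1$, and for $\gamma\in(1,2)$ requires exploiting the fractional Sobolev regularity $\bar\rho \in \mathcal H^{\gamma/2}$ (and the H\"older consequence of Lemma \ref{lem:continuity}) together with a replacement lemma in the spirit of those used in the proof of Theorem \ref{th:HLM}, following the strategy of \cite{BJ}. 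Letting $N\to\infty$ and then $\varepsilon\to 0$ yields the stated limit.

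To conclude, one combines the two asymptotics with the normalisation $B_N(\theta)^{-1}$ to read off the three closed forms (a), (b), (c). In case (a), substituting the explicit profile $\bar\rho = V_0/V_1$ (valid only for $\theta<0$) collapses the boundary expression into $\kappa c_\gamma \gamma^{-1}(\tilde\alpha-\tilde\beta)\int_0^1 dv/[v^\gamma+(1-v)^\gamma]$, as in \eqref{eq:fl67}. The unified representation $\int_0^1 h_\theta(u)\bar\rho(u)\,du + C(\tilde\alpha,\tilde\beta,\theta)$ of \eqref{lim_fick} is then obtained by Fubini on $\mathcal I(u)$ (together with an analogous rearrangement of the boundary integrals), combined with the macroscopic equation satisfied by $\bar\rho$ (either $\mathbb L \bar\rho = 0$ or $\mathbb L \bar\rho = \hat\kappa(V_1\bar\rho - V_0)$, depending on the regime). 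This rearrangement isolates the explicit kernel $(1-u)^{1-\gamma} - u^{1-\gamma}$ and produces the coefficients $\kappa/\gamma$ for $\theta\le 0$ and $1/(1-\gamma)$ for $\theta\ge 0$ appearing in \eqref{function_h}. The critical case $\gamma = 1$, $\theta=0$ is excluded precisely because this rearrangement generates a logarithmic divergence that would need a separate finer argument.
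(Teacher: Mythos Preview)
Your approach diverges from the paper's at a crucial point and leaves a real gap for $\gamma\in[1,2)$.

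The paper's first move is to observe that, by stationarity, ${\mathcal L}_N \eta(x) = W_x^{ex} - W_{x+1}^{ex}$ forces $E_{\mu_{ss}^N}[W_x^{ex}]$ to be \emph{independent of $x$}. They then exploit this by rewriting $E_{\mu_{ss}^N}[W_1^{ex}]$ as the average $\tfrac{1}{\#\Lambda_N}\sum_{x}E_{\mu_{ss}^N}[W_x^{ex}]$. Summing the bulk contribution $\sum_{y<x\le z}p(z-y)(\rho_N(y)-\rho_N(z))$ over $x$ produces an extra factor $(z-y)$, and a simple regrouping collapses everything into a \emph{single} sum $\tfrac{1}{\#\Lambda_N}\sum_x h_\theta^N(\tfrac xN)\,\rho_N(x)$ plus an explicit constant $C_\theta^N$. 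The function $h_\theta^N$ converges uniformly on compacts of $(0,1)$ to the integrable $h_\theta$ of \eqref{function_h}, so the hydrostatic limit (Theorem~\ref{th:HLM}) applied to this single sum yields \eqref{lim_fick} directly. The pointwise forms (a)--(c) are then \emph{deduced} from \eqref{lim_fick} by Fubini, using once more that the limit is constant in $u$; the logical direction is the reverse of yours.

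Your route instead attacks $E_{\mu_{ss}^N}[W_{[uN]}^{ex}]$ head-on and tries to pass to the limit in the singular double sum. For $\gamma\in(0,1)$ your trivial near-diagonal bound works, but for $\gamma\in[1,2)$ it does not: with $|\rho_N(y)-\rho_N(y+k)|\le 1$ and at most $k$ terms in the inner sum, one gets $N^{\gamma-1}\sum_{k\le \varepsilon N}k\,p(k)$, which is of order $N^{\gamma-1}$ for $\gamma>1$ and of order $\log N$ for $\gamma=1$, hence diverges. You then invoke the H\"older regularity of $\bar\rho$ from Lemma~\ref{lem:continuity}, but that is regularity of the \emph{limiting} profile, not of the discrete $\rho_N$; Theorem~\ref{th:HLM} gives only weak convergence $\tfrac1N\sum_x G(\tfrac xN)\rho_N(x)\to\int G\bar\rho$ and says nothing about $\rho_N(y)-\rho_N(y+k)$ pointwise. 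A replacement lemma does not obviously close this either, since the kernel $p(z-y)$ is too singular near the diagonal to be absorbed into a bounded test function. The averaging-over-$x$ trick is precisely what dissolves this singularity; without it you would need an independent discrete regularity estimate on $\rho_N$ (e.g.\ extracted from the traffic equation), which is a separate and non-trivial argument you have not supplied.

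A minor point: the equivalence between \eqref{lim_fick} and the expressions (a)--(c) is pure Fubini combined with the constancy in $u$ of the limiting current; the macroscopic PDE for $\bar\rho$ plays no role there.
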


\begin{proof} 
Since the measure $\ex$ is stationary  and $\mathcal L_N \eta_ x = W_x^{ex}(\eta)-W_{x+1}^{ex}(\eta)$ for all $x\in \Lambda_N$ and for all $\eta$, it follows that ${E}_{\ex}[ W_x^{ex}] = {E}_{\ex}[ W_1^{ex}],$ for all $x\in \Lambda_N$. Then we can write
{\begin{equation}\label{eq:Curr}
    \begin{split}
    {E}_{\ex}[ W_1^{ex}] &= \frac{\kappa N^{-\theta}}{\# \Lambda_N}\sum_{z\in\Lambda_N} z \left[ \xalpha - {E}_{\ex}[\eta(z)] \right]\sum_{y\leq 0}p(z-y) \\
    &+ \frac{\kappa N^{-\theta}}{\# \Lambda_N}\sum_{y\in\Lambda_N} (N-1-y) \left[{E}_{\ex}[\eta(y)]-\xbeta \right]\sum_{z\geq N}p(z-y) \\
    &+\frac{1}{\# \Lambda_N}\sum_{z\in\Lambda_N}\sum_{y=1}^{z-1}p(z-y) (z-y)\left[{E}_{\ex}[\eta(y)]-{E}_{\ex}[\eta(z)]\right].
    \end{split}
\end{equation}
}

We define the linear interpolation functions $ \tilde{r}_{N}^{\pm} :[0,1]\to\R$, such that for all $z\in \Lambda_N$ we have that
\begin{equation}\label{tilder}
\tilde{r}_{N}^{-}\left(\tfrac{z}{N}\right)=\sum_{y \geq z} y p(y) \quad \textrm{and} \quad  \tilde{r}_{N}^{+}\left(\tfrac{z}{N}\right)=-\sum_{y \leq z-N} y p(y).\end{equation}

Using \eqref{eq:Curr} it is not difficult to see that
\begin{equation}\label{eq:fk1}
\frac{1}{B_N(\theta)} {E}_{\ex}[ W_1^{ex}] =\frac{1}{\# \Lambda_N} \sum_{x\in\Lambda_N}h_\theta^N\left(\tfrac{x}{N}\right)E_{\mu_{ss}^N}[\eta(x)] + \dfrac{ C_\theta^N}{B_N(\theta)},
\end{equation}
where 
$$ h_\theta^N \left(\tfrac{x}{N}\right)= \frac{\kappa N^{1-\theta}}{B_N(\theta)}\left[ -\tfrac{x}{N}r^-_N\left(\tfrac{x}{N}\right) +\left(\tfrac{N-1-x}{N}\right)r^+_N\left(\tfrac{x}{N}\right)\right] +\frac{\tilde r^-_N\left(\tfrac{x}{N}\right) - \tilde r^+_N\left(\tfrac{x}{N}\right)}{B_N(\theta)},$$
and 
$$C_{\theta}^N:={\frac{\kappa}{ N^{\theta}\ \# \Lambda_N}}\left[  \xalpha \sum_{z\in\Lambda_N}\sum_{y\leq 0}z p(z-y) -\xbeta\sum_{y\in\Lambda_N}\sum_{z\geq N}(N-1-y)p(z-y)\right].$$
From \eqref{tilder},  we get the following convergence
\begin{equation}\label{eq:convpwh}
 \lim_{N\to\infty} \left\vert h_\theta^N\left( \tfrac{[uN]}{N}\right) - h_\theta(u) \right\vert = 0 
\end{equation}
which holds uniformly for $u \in (a,1-a)$ (with $0<a<1$ fixed), and the function $h_\theta:(0,1)\to\mathbb{R}$ is given in \eqref{function_h}.
We note that $h_\theta$ is singular at $u=0$ and $u=1$ if $\gamma\in (1,2)$, but it is integrable in $[0,1]$ for $\gamma \in (0,2)$.   Moreover, it is easy to see that, for $u\in (0,1)$ {and for $\gamma\neq 1$} we have that
\begin{equation}\label{cota_h}
    \vert h_\theta(u)\vert\lesssim u^{1-\gamma}+(1-u)^{1-\gamma}. 
\end{equation}
Regarding the last term of \eqref{eq:fk1}, a simple computation shows that 
$$\displaystyle\lim_{N\to\infty}\frac{1}{B_N(\theta)}C_\theta^N=C(\xalpha,\xbeta,\theta),$$
 where $ C(\xalpha,\xbeta,\theta)$ was defined in \eqref{constant_c}.
Now, note that from \eqref{eq:convpwh} and the fact that $| \eta(x)|\leq 1 $,  we get that
\begin{equation}\label{eq:fk2}
\lim_{N\to \infty}\left\vert \frac{1}{\# \Lambda_N} \sum_{x\in N I(a)}\Big( h_\theta^N\left(\tfrac{x}{N}\right)-h_\theta(\tfrac xN)\Big)E_{\mu_{ss}^N}[\eta(x)] \right\vert  = 0,
\end{equation}
where $I(a) = [a,1-a]$ and $NI(a)= [Na,N(1-a)]\cap \mathbb{N}.$
From \eqref{cota_h} and the fact that $| \eta(x)|\leq 1 $ we get that
$$\left|\frac{1}{\# \Lambda_N} \sum_{x\in (N I(a))^{c}} h_\theta^{N}\left(\tfrac{x}{N}\right) {E}_{\ex} [ \eta(x) ] \right| \lesssim \left[a^{2-\gamma}+(1-a)^{2-\gamma}\right].$$
Moreover, from Theorem \ref{th:HLM} and, for   $h_\theta^a$ a continuous extension of the function $h_\theta$ restricted to ${I(a)}$, we get that
\begin{equation}\label{eq:fk2}
\lim_{N\to \infty}\left\vert \frac{1}{\# \Lambda_N} \sum_{x\in\Lambda_N}h_\theta^a(\tfrac xN)E_{\mu_{ss}^N}[\eta(x)] -\int_0^1 h_\theta^a(u)\bar\rho(u)du\right\vert  = 0.
\end{equation}
Now, from \eqref{eq:fk1} and \eqref{eq:fk2} sending first $N\to \infty$ and then  $a\to 0$ we obtain  \eqref{lim_fick}. 

\medskip
The other expressions of the limiting current given at the end of the theorem are achieved by using properties of the integrals and the fact that the limit does not depend on the variable $u$. To check them properly, let us  consider for instance $\theta>0$ and $\gamma\neq 1$. Since the limit does not depend on $u$, we have that
$$ c_{\gamma} \int_{0}^u \; \int_{u}^{1} \, \cfrac{{ \bar\rho} (v) -{ \bar\rho} (w)}{(w-v)^{1+\gamma}}\, dw dv=c_\gamma \int_0^1\int_0^u\int_u^1 \frac{\bar\rho(v)-\bar\rho(w)}{(w-v)^{\gamma+1}}dwdvdu.$$
Using Fubini's theorem twice, last display equals to
$$c_\gamma \int_0^1\int_v^1\int_v^w \frac{\bar\rho(v)-\bar\rho(w)}{(w-v)^{\gamma+1}}dudwdv=c_\gamma \int_0^1\int_v^1 \frac{\bar\rho(v)-\bar\rho(w)}{(w-v)^{\gamma}}dwdv.$$
Finally, a simple computation, based again on Fubini's theorem, shows that last display is equal to 
$$\int_0^1 h_\theta(v)\bar\rho(v)dv .$$
This ends the case $\theta>0$.  The cases $\theta \leq 0$ can be obtained by performing similar computations to the ones above, plus the fact that 
\begin{equation*}
\kappa\int_{0}^1 \int_{u}^{1}\xalpha r^{-}(v)dv\, du- \kappa\int_{0}^{1}\int_0^u\xbeta r^{+}(v)dv\, du=\frac{c_\gamma \kappa(\xalpha-\xbeta)}{\gamma(2-\gamma)}.
 \end{equation*}
Finally, we note that the second equality in item a) is obtained by algebraic manipulations using the fact  that  $\bar\rho(u) = \dfrac{ V_0(u)}{V_1(u)}$. 
\end{proof}

\section*{Acknowledgement}
{The work of C.B. has been supported by the projects LSD ANR-15-CE40-0020-01 of the French National Research Agency (ANR). B.J.O. thanks  Universidad Nacional de Costa Rica  for sponsoring the participation in  this article through the project 0497-18.
 P.G.  and S.S. thank  FCT/Portugal  for financial support through CAMGSD, IST-ID,
projects UIDB/04459/2020 and UIDP/04459/2020.   This project has received funding from the European Research Council (ERC) under  the European Union's Horizon 2020 research and innovative programme (grant agreement   n. 715734).}


\begin{thebibliography}{99}


\bibitem{Bah} Bahadoran C. (2010). ``A quasi-potential for conservation laws with boundary conditions" 
 - arXiv preprint arXiv:1010.3624.

\bibitem{Bar2} Barr\'e, J., Bernardin, C., Ch\'etrite, R. (2018). ``Density large deviations for multidimensional stochastic hyperbolic conservation laws", J. Stat. Phys. {\bf 170}, no. 3, 466--491.

\bibitem{BBMN} Bellettini, G.,  Bertini, L. , Mariani, M., Novaga, M. (2010). ``$\Gamma$-entropy cost for scalar conservation laws". Archive for Rational Mechanics and Analysis {\bf 195}, 261--309.

\bibitem{Ber0} Bernardin, C. (2008). ``Stationary nonequilibrium properties for a heat conduction model", Phys. Rev. E  {\bf 78}, 021134.

\bibitem{BPS} Bernardin, C., Cardoso, P., Goncalves, P., Scotta, S. (2020). Hydrodynamic limit for a boundary driven super-diffusive symmetric exclusion. arXiv preprint arXiv:2007.01621.


\bibitem {BGJ} Bernardin, C., Gon\c{c}alves, P. and Jara, M. (2016). ``3/4-fractional superdiffusion in a system of harmonic oscillators perturbed by a conservative noise". Archive for Rational Mechanics and Analysis, {\bf 220} (2), 505--542.


\bibitem{BGJSS} Bernardin, C., Gon\c{c}alves, P., Jara, M., Sasada, M., Simon, M. (2015). ``From normal diffusion to superdiffusion of energy in the evanescent flip noise limit". Journal of Statistical Physics, {\bf 159}(6), 1327--1368.

\bibitem{BGJS0} Bernardin, C., Gon\c{c}alves, P., Jara, M., Simon, M. (2018). ``Interpolation process between standard diffusion and fractional diffusion". Annales de l'Institut Henri Poincar\'e, Probabilit\'es et Statistiques Vol. {\bf 54}, No. 3, pp. 1731--1757.

\bibitem{BGJS} Bernardin, C., Gon\c{c}alves, P., Jara, M., Simon, M. (2018). ``Nonlinear perturbation of a noisy Hamiltonian lattice field model: universality persistence". Communications in Mathematical Physics, {\bf 361}(2), 605--659.

\bibitem{cpb} Bernardin, C., Gon\c{c}alves, P., Jim\'enez-Oviedo, B. (2020).  ``A microscopic model for a one parameter class of fractional laplacians with Dirichlet boundary conditions". {Archive for Rational Mechanics and Analysis}, 1--48.

\bibitem{BGBJ}  % Artículo en revista; Article in a journal
Bernardin, C., Gon\c{c}alves, P., Jim\'enez-Oviedo, B. (2019).  ``Slow to fast infinitely extended reservoirs for the symmetric exclusion process with long jumps". {Markov Processes and Related Fields}, {\bf 25}, 217--274.

\bibitem{BGS} Bernardin, C., Gon\c{c}alves, P., Sethuraman, S. (2016). ``Occupation times of long-range exclusion and connections to KPZ class exponents". Probability Theory and Related Fields, {\bf 166}(1), 365--428.



\bibitem{BJ}  % Artículo en revista; Article in a journal
Bernardin, C.  and Jim\'enez-Oviedo, B. (2017). 
``Fractional Fick's Law for the Boundary Driven Exclusion Process with Long Jumps''. {ALEA}{\bf 14}(1): 473--501.


\bibitem{Ber1} Bernardin, C., Kannan, V., Lebowitz, J.L., Lukkarinen, J. (2012). ``Harmonic systems with bulk noises", Journal of Statistical Physics {\bf 146} (4), 800--831.


\bibitem{BER-MFT} Bertini L., De Sole A. , Gabrielli D., Jona-Lasinio G., Landim C. (2015).
`` Macroscopic fluctuation theory ". Reviews of Modern Physics {\bf 87} (2), 593.


\bibitem{BD} Bodineau, T., Derrida, B. (2011).`` Phase Fluctuations in the ABC Model". J Stat Phys {\bf{145}}, 745-762. 

\bibitem{BB} Bogdan, K.,  Byczkowski, T. (2000).`` Potential theory of Schr\"odinger operator based on fractional Laplacian". Probab. Math. Statist. {\bf 20}, no. 2, Acta Univ. Wratislav. No. 2256, 293–335. 

\bibitem{C0} Cane, G. (2022). ``Superdiffusion transition for a noisy harmonic chain subject to a magnetic field".  
eprint arXiv:2201.03373.

\bibitem{CGJ} Cardoso, P., Gon\c{c}alves, P., Jim\'enez-Oviedo, B. (2021). ``Hydrodynamic behavior of long-range symmetric exclusion with a slow barrier: diffusive regime". arXiv preprint arXiv:2111.02868.

\bibitem{CG} Chebloun, P., and Grosskinsky, S. (2014). ``Condensation in stochastic particle systems with stationary product measures", J. Stat. Phys. {\bf 154} (1-2), 432–465.


\bibitem{DDSMS} Das, S. G., Dhar, A., Saito, K., Mendl, C. B., Spohn, H. (2014). ``Numerical test of hydrodynamic fluctuation theory in the Fermi-Pasta-Ulam chain". Physical Review E, {\bf 90}(1), 012124.

\bibitem{DMF} De Masi, A. , Ferrari P.A. (1984). ``A remark on the hydrodynamics of the zero range process". Journ. Stat. Phys. {\bf{36}} 81--87.

\bibitem{DER} Derrida B. (2007). ``Non-equilibrium steady states: fluctuations and large deviations of the density and of the current". J. Stat. Mech. P07023.

\bibitem{DNPV} Di Nezza, E., Palatucci, G., Valdinoci, E. (2012).``Hitchhiker's guide to the fractional Sobolev spaces". Bull. Sci. Math. {\bf{136}}, no. 5, 521–573.


\bibitem{evans0} Evans, M. R. (2000). Phase transitions in one-dimensional nonequilibrium systems. Brazilian Journal of Physics, {\bf{30}}(1), 42--57.

\bibitem{evans} Evans, M. R., Hanney, T. (2005). Nonequilibrium statistical mechanics of the zero-range process and related models. Journal of Physics A: Mathematical and General, {\bf 38}(19), R195.

\bibitem{G0} Gon\c{c}alves, P. (2017). ``Hydrodynamics for symmetric exclusion in contact with reservoirs". International workshop on Stochastic Dynamics out of Equilibrium, 137--205.

\bibitem{GJ} Gon\c{c}alves, P., Jara, M. (2018). ``Density fluctuations for exclusion processes with long jumps". Probability Theory and Related Fields, {\bf 170}(1), 311--362.

\bibitem{GS0} Gon\c{c}alves, P., Scotta, S. (2020). ``From diffusive to fractional behavior in a boundary driven exclusion process", to appear in Markov Processes and Related Fields. 

\bibitem{GM}  % Artículo en revista; Article in a journal
Guan, Q.-Y. and Ma, Z.-M. (2006). ``Reflected symmetric $\alpha$-stable processes and regional fractional Laplacian''.
{Probability theory and related fields, springer}{\bf 134}(4): 649--694.

\bibitem{HG} Hurtado, P. I., Garrido, P. L. (2016). ``A violation of universality in anomalous Fourier's law". Scientific reports, {\bf 6}(1), 1--10.

\bibitem{J0} Jara, M. (2009). ``Current and density fluctuations for interacting particle systems with anomalous diffusive behavior". eprint arXiv:0901.0229.


\bibitem{JKO} Jara, M., Komorowski T., and Olla S. (2015). ``Superdiffusion of energy in a chain of harmonic oscillators with noise". Communications in Mathematical Physics {\bf{339.2}} : 407--453.

\bibitem{J} Jensen L. (2000). ``The asymmetric exclusion process in one dimension", Ph.D. dissertation, New York Univ., New York.


\bibitem{Kipnis}
Kipnis C. (1986). {``Central Limit Theorems for Infinite Series of Queues and Applications to Simple Exclusion"}.
Ann. Probab., vol {\bf 14}, no 2,  397--408.

\bibitem{KL} {Kipnis C. and Landim C. (1999)}. {`` Scaling Limits of Interacting Particle Systems"}, Springer-Verlag, New York.

\bibitem{Kundu} Kundu, A., Bernardin, C., Saito, K., Kundu, A., and  Dhar, A (2019).  ``Fractional equation description of an open anomalous heat conduction set-up". Journal of Statistical Mechanics: Theory and Experiment, vol. {\bf 2019}, no 1, p. 013205.

\bibitem{LP} Lepri, S.  and Politi, A. (2011). ``Density profiles in open superdiffusive systems". Phys. Rev. E {\bf{83}}, 030107(R).

\bibitem{LMS} Levine, E., Mukamel, D., and Sch\"utz, G.M. (2005). ``Zero-Range Process with Open Boundaries". J. Stat. Phys., {\bf{120}} (5-6):759–778.

\bibitem{Lig} Liggett, T. M. (2005). ``Interacting particle systems".  Classics in Mathematics. Springer-Verlag, Berlin.

\bibitem{Mar0} Mariani M. (2010). ``Large deviations principles for stochastic scalar conservation laws". Probability theory and related fields, {\bf 147} (3--4), 607--648.



\bibitem{PSSS} {Popkov V., Schadschneider A., Schmidt J. and Sch\"utz G. (2015).} {``Fibonacci family of dynamical universality classes"}. PNAS, {\bf 112} (2015), 12645--12650.

\bibitem{PSS} Popkov, V., Schmidt, J., and Sch\"utz, G. M. (2015). ``Universality classes in two-component driven diffusive systems". Journal of statistical physics, \textbf{160}, (4), 835--860.




\bibitem{SSS} Saito, K., Sasada, M., and Suda, H. (2019). ``5/6-Superdiffusion of Energy for Coupled Charged Harmonic Oscillators in a Magnetic Field". Communications in Mathematical Physics, {\bf 372}(1), 151--182.

\bibitem{Seth} Sethuraman, S. (2016). ``On microscopic derivation of a fractional stochastic Burgers equation". Communications in Mathematical Physics, {\bf 341}(2), 625--665.

\bibitem{Spi}
Spitzer, F. (1970). ``Interaction of markov processes". Advances in Math., {\bf 5}(2), 246--290. 

\bibitem{SethS} Sethuraman, S., Shahar, D. (2018). ``Hydrodynamic limits for long-range asymmetric interacting particle systems". Electronic Journal of Probability, {\bf 23}, 1--54.


\bibitem{Spohnbook} {Spohn H. (1991)}. {``Large Scale Dynamics of Interacting Particles"}. Springer-Verlag, Berlin.

\bibitem{Spohn0} Spohn H. (2014). ``Nonlinear fluctuating hydrodynamics for anharmonic chains". Journal of Statistical Physics {\bf 154} (5), 1191--1227.

\bibitem{Spohn_Stolz} {Spohn, H. and Stolz, G. (2015).} ``Nonlinear fluctuating hydrodynamics in one dimension:  the case of two conserved fields". J. Stat. Phys., {\bf 160}, 861--884.


\bibitem{Tsunoda}  % Artículo en revista; Article in a journal
	Tsunoda, K. (2020). ``Hydrostatic limit for exclusion process with slow boundary revisited''. {\it RIMS K\^oky\^uroku Bessatsu}, {\bf B79}, 149--162.
	
	
\bibitem{V0} Vanicat, M. (2017). Exact Solution to Integrable Open Multi-species SSEP and Macroscopic Fluctuation Theory. J Stat Phys {\bf 166}, 1129?1150. 	
	
\bibitem{VAR}  Varadhan S.R.S. (2004). ``Large deviations for the asymmetric simple exclusion process". Stochastic analysis on large scale interacting systems", 1--27.  Adv. Stud. Pure Math., {\bf{ 39}}, Math. Soc. Japan, Tokyo. 

\bibitem{dembo} Zeitouni, A. D. O., and Dembo, O. (1998). ``Large Deviations Techniques and Applications''. Springer-Verlag, New York.


\end{thebibliography}
\end{document}